\definecolor{darkblue}{rgb}{0.1, 0.2, 0.75}
\definecolor{darkgreen}{rgb}{0.1, 0.35, 0}
\definecolor{candypink}{rgb}{0.89, 0.44, 0.48}
\definecolor{deepcerise}{rgb}{0.85, 0.2, 0.53}
\definecolor{atomictangerine}{rgb}{1.0, 0.6, 0.4}
\definecolor{fandango}{rgb}{0.71, 0.2, 0.54}
\newtheorem{theorem}{Theorem}[section]
\newtheorem{Def}[theorem]{Definition}
\newtheorem{thm}[theorem]{Theorem}
\newtheorem{cor}[theorem]{Corollary}
\newtheorem{lemma}[theorem]{Lemma}
\theoremstyle{remark}
\newtheorem{remark}[theorem]{Remark}
\numberwithin{equation}{section}
\def\RR{\mathbb{R}}
\def\NN{\mathbb{N}}
\def\mE{\mathbb{E}}
\def\ZZ{\mathbb{Z}}
\newcommand{\DD}{\mathbb{D}}
\def\bfy{{\bf y}}
\def\bfz{{\bf z}}
\def\crr{{\mathcal R}}
\newcommand{\ca}{{\mathcal A}}
\newcommand{\cc}{{\mathcal C}}
\newcommand{\ce}{{\mathcal E}}
\newcommand{\cf}{{\mathcal F}}
\newcommand{\cg}{{\mathcal G}}
\newcommand{\ch}{{\mathcal H}}
\newcommand{\ci}{{\mathcal I}}
\newcommand{\cj}{{\mathcal J}}
\newcommand{\cp}{{\mathcal P}}
\newcommand{\cs}{{\mathcal S}}
\newcommand{\cz}{{\mathcal Z}}
\def\la{{\lambda}}
\def\si{\sigma}
\def\la{{\lambda}}
\def\La{{\Lambda}}
\def\al{{\alpha}}
\def\be{{\beta}}
\def\Ga{{\Gamma}}
\def\ga{{\gamma}}
 \newcommand{\vp}{\varphi}
 \newcommand{\ep}{\varepsilon}
\newcommand{\lp}{\left(}
\newcommand{\rp}{\right)}
\newcommand{\lc}{\left[}
\newcommand{\rc}{\right]}
\def\ll{\llbracket}
\def\rr{\rrbracket}
\def\wt{\widetilde}  
\def\id{\text{Id}}  
\begin{document}
\title[Compensated weighted sums]
{Limit theorems for compensated weighted sums and  application to numerical approximations}
\date{}   

\author[Y. Liu]
{Yanghui Liu} 
\address{Y. Liu: Baruch College, City University of New York, New York}
\email{yanghui.liu@baruch.cuny.edu}

\keywords{ 
  {Compensated weighted sums,}
 {Skorohod-type Riemann sums,}
 {Limit theorems,}
 {Discrete rough paths,}
 {Fractional Brownian motion,}
 {Additive stochastic differential equation,}
 {Euler  method.}
}
    
\begin{abstract} 
In this   paper, we consider  a  ``compensated''  random sum   that arises  from numerical approximation of stochastic integrations and differential equations.  
We   show that the compensated sum exhibits  some surprising    cancellations among its components, a property which allows to transform it    into a Skorohod-type Riemann sum.      
We  then establish     limit theorem for the compensated  sum  based on   study of the Skorohod-type Riemann sum. Our proof employs techniques from Malliavin calculus and rough path.    
We apply  our limit theorem result to  the Euler   approximation method for stochastic integrals and additive stochastic differential equations,   filling a notable gap in this area of  research. We show that the Euler method converges to the solution at the  rate $(1/n)^{H+1/2}$, and that this rate is exact  in the sense that the asymptotic error distribution solves   a linear differential equation.  
\end{abstract}

\maketitle

{
\hypersetup{linkcolor=black}
\setcounter{tocdepth}{2}
}

\section{Introduction}\label{section.intro}

  Consider  a  stochastic process  $(h^{(n)}_{k}, k=0,\dots,n-1)$, $n\in\NN$    and an associated  ``weight'' process $(y_{t}, t\in[0,T])$. 
  Let  $\cp_{n}: 0=t_{0}<\cdots<t_{n}=T$  denote a partition  on $[0,T]$.   
Suppose that $\sum_{k=0}^{n-1}h^{(n)}_{k}$ converges in distribution as $n\to\infty$,   to a  Wiener process  for instance.  What can be said about  the asymptotic behavior of the weighted sum: 
     \begin{eqnarray}
\sum_{k=0}^{n-1} y_{t_{k}}h^{(n)}_{k} ?  
\label{e.ws}
\end{eqnarray}  
This problem   has drawn a lot of attention in the recent literature due to its   essential roles in topics  such as   stochastic integration  (see e.g. \cite{binotto2018weak, burdzy2010change, liu2023convergence, nourdin2010weak}),   numerical   approximation (see e.g. \cite{gradinaru2009milstein, hu2016rate, liu2019first}), and parameter estimation (see e.g. \cite{barndorff2009power, chong2022statistical, Paris, corcuera2006power}). The limit theorem results for \eqref{e.ws}   have been established     in various   settings (see e.g. 
\cite{burdzy2010change, LT20, nourdin2008asymptotic, nourdin2016quantitative, nourdin2010central, nourdin2009asymptotic}).

Inspired by these contributions,    the purpose of this paper  is     to study a         compensated weighted sum in the following form: 
    \begin{eqnarray}
\sum_{k=0}^{n-1} 
\big(y_{t_{k}} h_{k}^{(n),1}+y_{t_{k}}' h_{k}^{(n),2}+\cdots+y_{t_{k}}^{(\ell-1)} h_{k}^{(n),\ell}  \big)
\qquad\text{for}\quad \ell\geq 2,  
\label{e.cws}
\end{eqnarray}
 where  $h^{(n),i}$ are defined by:   
  \begin{eqnarray}
h^{(n),i}_{k} = \int_{t_{k}}^{t_{k+1}}x^{i}_{ t_{k}u} du    
\qquad
\text{with}\quad  x^{i}_{t_{k}u}:= (x_{u}-x_{t_{k}})^{i}/i! \,   
\notag
\end{eqnarray}
   and  $(x_{t}, t\geq0)$ is a fBm with Hurst parameter $H<1/2$. Here $(y,y',\dots,y^{(\ell-1)})$ is a process ``controlled'' by $x$ (see Definition \ref{def.control}).  
    As   an application of the compensated sum \eqref{e.cws}, we will    investigate   Euler's numerical   methods  for  stochastic integrals and  additive stochastic     differential equations driven by fBm. Our aim is to  derive the exact rate and asymptotic error distribution of these numerical  methods, and thus to fill  a  notable gap in this field of research.  

  A main  challenge   concerning the compensated weighted sum    \eqref{e.cws}  arises from  the cancellations present     among   individual weighted sums. 
For example, consider the case of $\ell=2$ in \eqref{e.cws}, namely  the compensated sum:      
\begin{eqnarray}
\sum_{k=0}^{n-1} \big(y_{t_{k}}h^{(n),1}_{k} +  y_{t_{k}}'h^{(n),2}_{k}\big) . 
\label{e.cws2}
\end{eqnarray}
It has been shown that both weighted sums  in \eqref{e.cws2} are of the order       $  (1/n)^{2H} $ (see \cite{liu2023power}).  Surprisingly, as we will see in this paper,      the compensated sum \eqref{e.cws2} converges to a non-zero limit at the rate   $(1/n)^{ 4H \wedge (H+1/2)}$, which  is strictly higher than those of the individual   sums. Here $a\wedge b$ stands for the minimum between $a$ and $b$.  In fact, we will show that the larger $\ell$     in \eqref{e.cws}    the higher    its     rate of convergence. 
  A main effort of this paper will be devoted    to understanding   the cancellations in \eqref{e.cws}.

In the following we state  our main limit theorem result. The reader is referred to   Theorem~\ref{thm.taylor} for a more precise statement. 

\begin{thm}\label{thm.taylor1}
Let $\ell\in\NN$ be such that $\ell H>1/2$. Then
the compensated sum   in \eqref{e.cws} converges stably  to the conditional Gaussian random variable $ c_{H}^{1/2}T^{H+1/2}\int_{0}^{T}y_{u}dW_{u}$ as the mesh size of $\cp_{n}$ tends to zero, where $W$ is a Brownian motion independent of $x$ and $c_{H}>0$ is a constant depending on $H$ only.   
\end{thm}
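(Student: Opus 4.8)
The plan is to reduce the compensated weighted sum \eqref{e.cws} to a Skorohod-type Riemann sum and then apply a limit theorem for such sums. First I would expand each increment $h^{(n),i}_k=\int_{t_k}^{t_{k+1}} (x_u-x_{t_k})^i/i!\,du$ using the Wiener chaos decomposition: writing $(x_u-x_{t_k})^i$ in terms of Hermite polynomials and the variance $\mathbb{E}[(x_u-x_{t_k})^2]=(u-t_k)^{2H}$, one separates a top chaos component (the Skorohod integral $\delta^i$ of an appropriate symmetric kernel) from strictly lower chaoses. The key algebraic point — the ``surprising cancellations'' advertised in the introduction — is that when one pairs $h^{(n),i}_k$ with the controlled derivative $y^{(i-1)}_{t_k}$ and sums over $i=1,\dots,\ell$, the lower-chaos contributions telescope or cancel against the Taylor-type structure of a controlled process, so that the bulk of \eqref{e.cws} is a genuine Skorohod (divergence) integral $\delta(\text{something})$ plus a trace term plus a remainder. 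Concretely, I expect the identity to take the form
\begin{eqnarray}
\sum_{k=0}^{n-1}\sum_{i=1}^{\ell} y^{(i-1)}_{t_k} h^{(n),i}_k
= \delta\!\left(\sum_{k=0}^{n-1} y_{t_k}\,\mathbf{1}_{[t_k,t_{k+1}]}\,\xi^{(n)}_k\right) + \text{(trace term)} + \text{(higher-order remainder)},
\notag
\end{eqnarray}
where $\xi^{(n)}_k$ is a deterministic kernel whose $L^2$ size is of order $(1/n)^{H+1/2}$ on each subinterval.

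Second, I would establish the limit theorem for the resulting Skorohod-type Riemann sum, which I expect is the content of an intermediate proposition in the paper. The strategy here is the fourth-moment / Malliavin–Stein method: show that the normalized sum, conditionally on $x$, has a covariance structure converging to that of $c_H^{1/2}T^{H+1/2}\int_0^T y_u\,dW_u$, and that the contraction norms of the kernels vanish, giving asymptotic conditional Gaussianity and stable convergence. The covariance computation hinges on the asymptotics of $\sum_k \langle \xi^{(n)}_j,\xi^{(n)}_k\rangle_{\mathcal H}$: because $H<1/2$, the fBm increments are negatively correlated and one gets a summable (short-range) correlation structure, producing the constant $c_H=\sum_{m\in\mathbb{Z}}\rho(m)$ for the appropriate correlation function $\rho$, together with the factor $T^{H+1/2}$ from the scaling of $n$ subintervals of length $T/n$ each contributing $(1/n)^{2H+1}$ to the variance. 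The independence of $W$ from $x$ comes out of the stable-convergence framework, since the limiting randomness sits in a chaos orthogonal to the one carrying $x$.

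Third, I would control the error terms: the lower-chaos remainders left over after the cancellation, the discretization error in replacing $y_u$ by $y_{t_k}$ (handled via the controlled-process hypothesis of Definition \ref{def.control} and $p$-variation / rough-path estimates on $x$), and the trace term (which under $\ell H>1/2$ should be of strictly smaller order than $(1/n)^{H+1/2}$ and hence negligible). Each of these is a matter of $L^2$ or $L^p$ norm estimates using the Gaussian structure of $x$ and standard sewing/discrete-rough-path bounds.

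The main obstacle I anticipate is the first step: making the cancellation mechanism precise and showing that the surviving ``genuine'' Skorohod integral really does sit at the claimed order $(1/n)^{H+1/2}$ with a nondegenerate limiting kernel, rather than at some intermediate order or vanishing entirely. This requires a careful bookkeeping of which Hermite components of each $h^{(n),i}_k$ survive the sum over $i$, and in particular understanding why the condition $\ell H>1/2$ is exactly what kills all the ``bad'' terms of order $(1/n)^{\ell H\wedge\cdots}$ and isolates the universal $(1/n)^{H+1/2}$ contribution with its $x$-independent Gaussian limit. The secondary difficulty is the conditional fourth-moment estimate, since the weights $y_{t_k}$ are themselves functionals of $x$ and one must decouple them from the chaos kernels carefully to apply the Malliavin–Stein machinery.
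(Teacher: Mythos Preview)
Your high-level architecture (Hermite expansion $\to$ cancellation $\to$ Skorohod sum $\to$ limit theorem) matches the paper, but the key organizing idea is missing and the cancellation step as you describe it would not go through. The paper does \emph{not} exhibit the cancellation for general controlled weights $y^{(i-1)}$. Instead it first uses the controlled-path structure to reduce to \emph{monomial} weights: substituting $z^{(i-1)}_{t_k}=\sum_{j=i}^{\ell} z_s^{(j-1)} x^{j-i}_{st_k}+r^{(i-1)}_{st_k}$ and controlling the remainder via Lemma~\ref{lem.dr} and a discrete sewing lemma yields $\ce^{z,n}_\ell(s,t)=\sum_{j=1}^\ell z_s^{(j-1)}\,\ce^x_j(s,t)+\crr^\ell_{st}$, where $\ce^x_L(s,t)=\sum_{i=1}^L\cj_s^t(x^{L-i},h^i)$ is the compensated sum with weights $x^{L-i}_{st_k}$. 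The cancellation (Lemma~\ref{lem.cex}) is then a purely combinatorial fact about $\ce^x_L$: after Hermite-expanding $(x_u-x_{t_k})^i$ and iterating $F\delta^\diamond(u)=\delta^\diamond(Fu)+\langle DF,u\rangle_\ch$, the zero-chaos pieces indexed by $(i,q,j)$ with $j=i-2q$ cancel exactly because $\sum_{q+j=\tau}(-1)^j/(q!j!)=0$. Your proposal locates the cancellation in ``the Taylor-type structure of a controlled process,'' but that structure enters only in the reduction step; attempting the cancellation directly on arbitrary weights would require chaos-expanding the $y^{(i-1)}$ themselves, which is not available for a general controlled path.

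A secondary methodological difference: the limit theorem for the surviving Skorohod sum (Theorem~\ref{prop.du}) is proved by the smart-path interpolation of Nourdin et al., not by a fourth-moment/Malliavin--Stein argument. One interpolates between the characteristic functions of $(Z^{(n)}_\pi,x_\pi)$ and $(Z_\pi,x_\pi)$, differentiates in the interpolation parameter, and applies the duality formula repeatedly. Because the reduction step has already replaced the general weights by monomials $x^{L-1}_{st_k}$ inside the divergence, the decoupling difficulty you flag never arises. Relatedly, the surviving Skorohod term is $\sum_k\int_{t_k}^{t_{k+1}}\delta^\diamond(x^{L-1}_{st_k}\mathbf{1}_{[t_k,v]})\,dv$, not $\delta\big(\sum_k y_{t_k}\mathbf{1}_{[t_k,t_{k+1}]}\xi^{(n)}_k\big)$ with deterministic $\xi^{(n)}_k$: the monomial of $x$, not $y$, sits inside the divergence, and the general weight reappears only through the outer coefficients $z_s^{(j-1)}$ after the reduction.
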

 
 The proof of Theorem \ref{thm.taylor1} is a combination of rough path techniques and Malliavin calculus. More specifically,   we will    show that   the compensated sum can be reduced  to some  monomial-weighted   sums based on the approach developed in \cite{LT20}. These simpler compensated  sums allow us to     investigate   the cancellations  by applying the Malliavin calculus tools. Eventually, the asymptotic distribution of \eqref{e.cws} is obtained  thanks to  the smart path interpolation approach  developed in \cite{nourdin2016quantitative, nourdin2012normal}.


As a main application of  Theorem \ref{thm.taylor1} we consider   the Euler method for additive stochastic differential equations (SDE for short).  
 Let $y$ be  solution of  the   following differential equation: 
 \begin{eqnarray}
 y_{t} &=&y_{0}+ \int_{0}^{t} b(y_{u})du +  \int_{0}^{t}  \si(u)  dx_{u}    \qquad
t\in[0,T],   
\label{e.sde2}
\end{eqnarray}
where   $b$   and $\si $ are deterministic continuous functions. It is well-known that equation \eqref{e.sde2} has a unique solution whenever $b$ is  continuously differentiable and  one-sided Lipschitz; see e.g. \cite{zhou2023backward}.     
Consider   the (first-order)  Euler method for   equation \eqref{e.sde2}: 
    \begin{eqnarray}
y_{t}^{(n)} &=&y^{(n)}_{t_{k}}+ b(y^{(n)}_{t_{k}}) (  t - t_{k}) +\si (t_{k})(x_{t}-x_{t_{k}})
\qquad
t\in[t_{k},t_{k+1}]. 
\label{e.euler2} 
\end{eqnarray}
Recall   that under proper regularity conditions for $b$ and $\si$ the  Euler method \eqref{e.euler2} converges to the solution $y$ at the rate $1/n$ in the case $H\geq 1/2$; see e.g.  \cite{neuenkirch2006optimal}.  Furthermore, the   asymptotic error distribution of the backward Euler method has been established in  \cite{zhou2023backward} under the one-sided Lipschitz condition of $b$ (see also \cite{hong2020optimal, riedel2020semi}), and the higher-order Euler methods for any $H\in(0,1)$ have been considered in \cite{hu2015taylor}.  
The reader is referred to    \cite{deya2012milstein, friz2014convergence,  hu2016rate, hu2021crank, leon2023euler, liu2023euler, liu2019first,  neuenkirch2008optimal} and reference therein   for other contributions in numerical approximation of SDEs.  
On the other hand, the analysis  of   Euler method \eqref{e.euler2} for the case $H<1/2$ is still absent in the literature. 
 
 
In this paper  we will see that the convergence of Euler method \eqref{e.euler2} for  $H<1/2$ is  much more involved, mainly due to the presence of the   complex  cancellations    that was mentioned previously.  
More specifically, we will show that the error process $y^{(n)}-y$ is dominated by    a compensated sum of the form \eqref{e.cws} as $n$ tends to infinity.   

In the following we state  our asymptotic error distribution result for the Euler method  (a more precise statement can be found in Theorem \ref{thm.euler}). 


 \begin{thm}
The    process $n^{H+1/2}(y^{(n)}-y) $ converges in distribution to  the solution of the linear differential equation:
\begin{eqnarray}
U_{t} = \int_{0}^{t}\partial b(y_{u})  U_{u}  du +c_{H}^{1/2}T^{H+1/2}\int_{0}^{t} \big[\partial b (y_{u}) \si (u)-\si'(u)\big]dW_{u}
\notag
\end{eqnarray}
$t\in[0,T]$ as $n $ tends to infinity. 
\end{thm}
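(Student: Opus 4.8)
The plan is to reduce the convergence of $n^{H+1/2}(y^{(n)}-y)$ to an application of Theorem~\ref{thm.taylor1}, by showing that the rescaled error is, up to negligible terms, governed by a linear equation whose forcing term is precisely a compensated weighted sum of the form \eqref{e.cws}. First I would write down the exact equation satisfied by the error $e^{(n)}_{t} := y^{(n)}_{t}-y_{t}$. Subtracting \eqref{e.sde2} from \eqref{e.euler2} and iterating over the partition intervals, one finds that $e^{(n)}$ solves a perturbed linear ODE driven by $\partial b(y_{u})$ whose inhomogeneous term splits into (i) a drift discretization error coming from replacing $\int_{t_{k}}^{t_{k+1}}b(y^{(n)}_{u})du$ by $b(y^{(n)}_{t_{k}})(t_{k+1}-t_{k})$, and (ii) a ``rough'' discretization error coming from replacing $\int_{t_{k}}^{t_{k+1}}\si(u)dx_{u}$ by $\si(t_{k})(x_{t_{k+1}}-x_{t_{k}})$. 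A Taylor expansion of $b$ and of $\si$ around $t_{k}$ (using that $y$ is controlled by $x$, Definition~\ref{def.control}, and that $\si$ is smooth enough) shows that the leading contribution to the error is exactly of the form $\sum_{k}\big(z_{t_{k}}h^{(n),1}_{k}+z'_{t_{k}}h^{(n),2}_{k}+\cdots\big)$ with $z$ built from $\partial b(y)$, $\si$ and their derivatives — i.e. a compensated weighted sum \eqref{e.cws} with $\ell$ chosen large enough that $\ell H>1/2$, plus remainders.

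Second, I would carry out the error analysis term by term. The drift term (i): since $b$ is $C^{1}$ and $y$ has Hölder regularity of order close to $H$, the per-step error is of order $(1/n)^{1+H}$, summing to $O((1/n)^{H})$ which after multiplication by $n^{H+1/2}$ is $O((1/n)^{1/2})\to 0$ — negligible. The rough term (ii): expanding $\si(u)dx_{u}$ and the controlled-path expansion of $y$ to sufficiently high order, the discretization defect on $[t_{k},t_{k+1}]$ produces, after reorganizing via the Taylor reduction of \cite{LT20} referenced after Theorem~\ref{thm.taylor1}, exactly the compensated-sum structure; by Theorem~\ref{thm.taylor1} the sum of the $\si$-derivative part converges stably to $-c_{H}^{1/2}T^{H+1/2}\int_{0}^{\cdot}\si'(u)dW_{u}$ (the sign and the appearance of $\si'$ come from the specific coefficients in the Taylor expansion of $\si(t_{k})-\si(u)$), while the part carrying $\partial b(y)$ as weight converges to $c_{H}^{1/2}T^{H+1/2}\int_{0}^{\cdot}\partial b(y_{u})\si(u)dW_{u}$. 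The higher-order remainder terms in the expansion are compensated sums with $\ell$ replaced by $\ell+1$ or more, hence converge to zero at a strictly higher rate by Theorem~\ref{thm.taylor1} (indeed by the discussion preceding it, larger $\ell$ yields faster convergence), so they drop out after rescaling.

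Third, I would pass to the limit in the linear equation. Write $n^{H+1/2}e^{(n)}_{t} = \int_{0}^{t}\partial b(y^{(n)}_{u})\,n^{H+1/2}e^{(n)}_{u}\,du + R^{(n)}_{t}$, where $R^{(n)}$ is $n^{H+1/2}$ times the inhomogeneous term and, by the previous step, converges in distribution (jointly with $y$, stably) to $c_{H}^{1/2}T^{H+1/2}\int_{0}^{t}[\partial b(y_{u})\si(u)-\si'(u)]dW_{u}$. Since $\partial b(y^{(n)})\to\partial b(y)$ uniformly (using continuity of $\partial b$ and convergence $y^{(n)}\to y$, which follows from step (i)-(ii) without rescaling), the solution map of the linear ODE $\dot U = \partial b(y)U + \dot R$ is continuous in $(R,\text{coefficient})$, so a standard continuous-mapping / Gronwall stability argument for linear equations gives convergence of $n^{H+1/2}e^{(n)}$ to the process $U$ solving
\begin{eqnarray}
U_{t} = \int_{0}^{t}\partial b(y_{u})U_{u}\,du + c_{H}^{1/2}T^{H+1/2}\int_{0}^{t}\big[\partial b(y_{u})\si(u)-\si'(u)\big]dW_{u}, \notag
\end{eqnarray}
which is the claim.

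The main obstacle I expect is in step two: correctly identifying the controlled-path/Taylor expansion of the one-step rough discretization error $\int_{t_{k}}^{t_{k+1}}\si(u)dx_{u} - \si(t_{k})(x_{t_{k+1}}-x_{t_{k}})$ together with the contribution of the nonlinear drift to the same order, and recognizing that after summation the whole thing is literally a compensated weighted sum \eqref{e.cws} (with the right weights $z^{(i)}$) up to remainders that are themselves higher-$\ell$ compensated sums. This bookkeeping — getting the exact coefficients $\partial b(y_{u})\si(u)-\si'(u)$ in front of $dW_{u}$, and controlling the many cross terms uniformly so that Gronwall closes — is where the real work lies; the final passage to the limit is then routine given Theorem~\ref{thm.taylor1}.
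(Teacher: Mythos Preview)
There is a genuine gap in your Step 2. You claim the drift discretization error (i), namely $\sum_{k}\int_{t_{k}}^{t_{k+1}}\big[b(y^{(n)}_{u})-b(y^{(n)}_{t_{k}})\big]du$, is negligible: per-step $O((1/n)^{1+H})$, sum $O((1/n)^{H})$, and after rescaling ``$O((1/n)^{1/2})\to 0$''. But $n^{H+1/2}\cdot(1/n)^{H}=n^{1/2}\to\infty$, not $(1/n)^{1/2}$. The naive H\"older bound on (i) diverges after rescaling; this term is \emph{not} negligible, and in fact it is precisely where the compensated-sum structure \eqref{e.cws} enters. Taylor-expanding $b(y^{(n)}_{u})-b(y^{(n)}_{t_{k}})$ to order $\ell$ in $\delta y^{(n)}_{t_{k}u}\approx\si(t_{k})\delta x_{t_{k}u}$ produces $\sum_{i=1}^{\ell}\partial^{i}b(y^{(n)}_{t_{k}})\si(t_{k})^{\otimes i}x^{i}_{t_{k}u}$, so the integrated drift error is exactly $\ce^{z,n}_{\ell}$ with $z^{(i-1)}=\Ga\,\partial^{i}b(y)\si^{\otimes i}$ (plus genuinely smaller remainders of order $(1/n)^{1}$ and $(1/n)^{(\ell+1)H}$). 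Theorem~\ref{thm.taylor1} applied to this sum is what yields the $\int_{0}^{t}\partial b(y_{u})\si(u)\,dW_{u}$ contribution. It cannot come from your term (ii): $\si$ is a function of time only, so $\int[\si(u)-\si(\eta(u))]dx_{u}$ has no dependence on $y$ and cannot produce a $\partial b(y)$ weight.

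For term (ii) the paper does not expand into a higher-$\ell$ compensated sum either; instead it integrates by parts $\int_{t_{k}}^{t_{k+1}}(u-t_{k})\si'(t_{k})\,dx_{u}$ to obtain a single weighted sum $-\sum_{k}\Ga_{t_{k}}\si'(t_{k})\,h^{1}_{t_{k}t_{k+1}}$, to which the $\ell=1$ case of the limit theorem applies directly, giving the $-\int_{0}^{t}\si'(u)\,dW_{u}$ piece. Your variation-of-constants/Gronwall strategy for the final passage to the limit is fine and close to what the paper does (it uses the fundamental solution $\La^{(n)}$ explicitly and shows $\La^{(n)}\to\La$ in $L_{p}$), but the argument cannot close until you correct the role of the drift term.
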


 Our second application  is focused on     numerical approximation of  the regular integral $\int_{0}^{t} y_{u}du$,   stated as follows.  The reader is referred to  Theorem \ref{thm.integral} for its precise statement. 
  \begin{thm}
  Let the assumptions be as in Theorem \ref{thm.taylor1}. Then we have the convergence in distribution: 
\begin{eqnarray}
n^{H+1/2}\Big(\int_{0}^{T} y_{u}d {u}-\sum_{k=0}^{n-1}y_{t_{k}}\cdot (t_{k+1}-t_{k}) \Big) \to c_{H}^{1/2}T^{H+1/2}\int_{0}^{T}y_{u}'dW_{u}  
\notag
\end{eqnarray}
as $n $ tends to infinity. 
\end{thm}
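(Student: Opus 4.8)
The plan is to identify the left-endpoint quadrature error with a compensated weighted sum of the form \eqref{e.cws} — the one attached to the controlled path $y'$ rather than $y$ — up to a remainder that is negligible at the scale $n^{H+1/2}$, and then to quote Theorem~\ref{thm.taylor1}. The point of departure is the exact identity
\[
\int_{0}^{T} y_{u}\, du - \sum_{k=0}^{n-1} y_{t_{k}}(t_{k+1}-t_{k}) = \sum_{k=0}^{n-1}\int_{t_{k}}^{t_{k+1}}\big(y_{u}-y_{t_{k}}\big)\, du ,
\]
which reduces everything to understanding the increments $y_{u}-y_{t_{k}}$ on each subinterval.

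Next I would Taylor expand these increments using the controlled-path structure of $y$. Fix $\ell$ with $\ell H>1/2$ as in Theorem~\ref{thm.taylor1}, enlarging the controlled tuple by one order if necessary (which is permitted under the standing assumptions) so that $y^{(\ell)}$ is available and $(y',y'',\dots,y^{(\ell)})$ is itself a controlled path in the sense of Definition~\ref{def.control}. Then for $u\in[t_{k},t_{k+1}]$ one has $y_{u}-y_{t_{k}}=\sum_{j=1}^{\ell} y^{(j)}_{t_{k}}\, x^{j}_{t_{k}u}+R_{t_{k}u}$ with $|R_{t_{k}u}|\lesssim|u-t_{k}|^{(\ell+1)H}$, the implicit constant being random but a.s.\ finite. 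Integrating over $[t_{k},t_{k+1}]$ and recalling $h^{(n),j}_{k}=\int_{t_{k}}^{t_{k+1}} x^{j}_{t_{k}u}\, du$ turns the error into
\[
\int_{0}^{T} y_{u}\, du - \sum_{k=0}^{n-1} y_{t_{k}}(t_{k+1}-t_{k}) = \sum_{k=0}^{n-1}\sum_{j=1}^{\ell} y^{(j)}_{t_{k}}\, h^{(n),j}_{k} + \sum_{k=0}^{n-1}\int_{t_{k}}^{t_{k+1}} R_{t_{k}u}\, du .
\]
The first double sum is exactly the compensated weighted sum \eqref{e.cws} built from the controlled path $(y',y'',\dots,y^{(\ell)})$: the shift between the index of $h^{(n),j}$ and that of $y^{(j)}$ is precisely the shift hard-wired into \eqref{e.cws}, and the leading (``zeroth'') component of this controlled path is $y'$.

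The first sum is now handled directly by Theorem~\ref{thm.taylor1} (in its precise form, Theorem~\ref{thm.taylor}): since $\ell H>1/2$, multiplying it by $n^{H+1/2}$ gives stable — hence in-distribution — convergence to $c_{H}^{1/2}T^{H+1/2}\int_{0}^{T} y'_{u}\, dW_{u}$. For the remainder, the bound on $R_{t_{k}u}$ together with $t_{k+1}-t_{k}\asymp 1/n$ yields
\[
n^{H+1/2}\,\Big|\sum_{k=0}^{n-1}\int_{t_{k}}^{t_{k+1}}R_{t_{k}u}\, du\Big| \;\lesssim\; n^{H+1/2}\cdot n\cdot n^{-(\ell+1)H-1}=n^{-(\ell H-1/2)}\longrightarrow 0
\]
almost surely. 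Since adding an a.s.\ null term leaves the stable limit unchanged, the claimed convergence in distribution follows; for a general partition one replaces $1/n$ by the mesh size $|\cp_{n}|$ throughout.

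Because the analytic heart of the matter is already packaged in Theorem~\ref{thm.taylor1}, the remaining work is light: one must justify the controlled-path Taylor expansion uniformly over the partition with remainder exponent $(\ell+1)H$, and verify that $(y',\dots,y^{(\ell)})$ is a controlled path of the type Theorem~\ref{thm.taylor1} requires — both consequences of Definition~\ref{def.control} and the a.s.\ Hölder regularity of the fBm $x$. The one quantitative pinch point, and the place to be careful, is that the remainder exponent $(\ell+1)H$ must \emph{strictly} exceed $H+1/2$, i.e.\ $\ell H>1/2$: there is no slack, and in particular one cannot truncate the Taylor expansion any earlier than order $\ell$.
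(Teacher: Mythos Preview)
Your proposal is correct and follows essentially the same route as the paper's proof of Theorem~\ref{thm.integral}: rewrite the quadrature error as $\sum_{k}\int_{t_{k}}^{t_{k+1}}\delta y_{t_{k}u}\,du$, insert the controlled-path expansion $\delta y_{t_{k}u}=\sum_{i=1}^{\ell}y^{(i)}_{t_{k}}x^{i}_{t_{k}u}+r^{(0)}_{t_{k}u}$ to produce $\ce^{y',n}_{\ell}(0,T)$ plus a remainder, bound the remainder by $n\cdot(1/n)^{(\ell+1)H+1}$ so that it vanishes after scaling by $n^{H+1/2}$, and invoke Theorem~\ref{thm.taylor}. The only cosmetic differences are that the paper works on general $(s,t)$ and therefore carries boundary terms $\int_{s}^{s'}-\int_{t}^{t'}$ (which vanish for $(0,T)$ on the uniform partition), and that Definition~\ref{def.control} gives the remainder exponent $(\ell+1)H-\ep$ rather than $(\ell+1)H$; since $\ell H>1/2$ strictly, this $\ep$-loss is harmless, as you implicitly note.
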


The paper is organized as follows.  In Section \ref{section.prelim} we recall some basic results of rough path theory and Malliavin calculus.   In Section \ref{section.lim}, we prove the limit theorem  for a Skorhod-type Riemann sum, and then apply it to some  monomial-weighted compensated sums. In the second part of the section, we establish   limit theorem for the  general  compensated weighted sums. As an  application we will consider the  numerical approximation for stochastic integrals. In Section \ref{section.euler}, we focus on the Euler approximation method for additive stochastic differential equations driven by fBm. 

\subsection{Notation}\label{section.notation} 
Let $\cp:0=t_{0}<t_{1}<\cdots<t_{n}=T$ be a partition on $[0,T]$. 
We denote $\eta(u)=t_{k}$ for $u\in[t_{k},t_{k+1})$ and   $\la(v)=t_{k+1}$ for $v\in(t_{k},t_{k+1}]$.    For $s,t\in [0,T]$ such that $s<t$,  we define   the discrete interval $\ll s, t\rr$  to be the collection of $t_{k}$'s in $[s,t]$ and the endpoints $s$ and $t$, namely:   $\ll s, t\rr = \{t_{k}: s\leq t_{k} \leq t\}\cup \{s,t\}$.  For $N\in\NN=\{1,2,\dots\}$ we denote   the discrete simplex  $\cs_{N}(\ll s,t\rr)=
\{ (u_{1},\dots, u_{N}) \in \ll s,t\rr^{N} :\, u_{1}< \cdots< u_{N}  \}$. Similarly, we denote the continuous simplex: $\cs_{N}([s,t])= \{ (u_{1}  ,\dots, u_{N} ) \in [ s,t]^{N} :\, u_{1} < \cdots< u_{N}  \}$.  

Throughout the paper we work on a  probability space $(\Omega, \mathscr{F}, P)$. If $X$ is a random variable, we denote by $| X |_{L_{p}}  $ the $L_{p} $-norm of $X$.
The letter $K$  stands  for  a constant independent of    any important parameters which can change from line to line. We write $A\lesssim B$ if there is a constant $K>0$ such that $A\leq KB$. We denote $[a]  $   the integer part of   $a$,   $\id$  the identity matrix, and $\mathbf{i}=\sqrt{-1}$. 
We define the function $\rho(x)=|x|\vee1$ for $x\in\RR$. 

Let $f: y=(y^{1},\dots,y^{m})\to f(y)$ be a function in $  C^{1}(\RR^{m})  $. We denote by $\partial_{j}f $ its     partial derivative   in the $j$th direction: $\partial_{j}f = \frac{\partial f}{\partial y^{j}} $, $j=1,\dots,m$. Similarly, we denote the higher-order derivatives $\partial^{L}_{j_{1}\cdots j_{L}}f=\partial_{j_{L}}\cdots \partial_{j_{1}}f$, $j_{1},\dots,j_{L}=1,\dots,m $.  
 For $b=(b^{1},\dots,b^{m})  \in C^{1}(\RR^{m}, \RR^{m})  $  we denote     the matrix 
\begin{eqnarray}\label{e.pb}
\partial b =\Big( \frac{\partial b^{i}}{\partial y^{j}}:\,\, i,j=1,\dots, m \Big)=\lp  {\partial_{j} b^{i} } :\,\, i,j=1,\dots, m \rp   
\end{eqnarray}
and
\begin{eqnarray}
\partial^{L}b = \lp \partial^{L}_{j_{1}\cdots j_{L}} b^{i} :\,\, i,j_{1},\dots,j_{L}=1,\dots,m \rp . 
\label{e.pbl}
\end{eqnarray}

\section{Preliminary results}\label{section.prelim}

In this section we  recall some basic results from the    rough path theory and      Malliavin calculus. In the last part of the section we   also state  some useful lemmas. 


\subsection{Elements of rough paths}\label{section.rp}

This subsection is devoted to introducing the main rough paths notations which will be used in the sequel.   The reader is referred to   \cite{FH20, FV10} for an introduction to the rough path theory.
 
 We first introduce a general notion of controlled rough process which will be used throughout the paper.
 Recall that the continuous simplex $\cs_{k}([0,T])$ is defined in Section \ref{section.notation}. 
For functions $f :  [0,T] \to \RR$ and $g : \cs_{2}([0,T]) \to \RR$ we define the operator $\delta$ such that:
\begin{eqnarray}\label{e.delta}
\delta f_{st} = f_{t}-f_{s}
\quad
\text{ and }
\quad
\delta g_{sut} = g_{st}-g_{su}-g_{ut} 
\end{eqnarray} 
for all $(s,t)\in\cs_{2}([0,T])$ and $(s, u,t)\in\cs_{3}([0,T])$.

   \begin{Def}\label{def.control}
   Let $x$ and $y,y',y'',\dots, y^{(\ell-1)}$ be real-valued continuous processes on $[ 0,T] $. Denote:  $x^{i}_{st} = (\delta x_{st})^{i}/i!$, $(s,t)\in\cs_{2}([0,T])$, $i=0, \dots,\ell-1$. For convenience, we   also write $y^{(0)} = y$, $y^{(1)} = y'$, $y^{(2)}= y''$,\dots, and $\bfy=(y^{(0)},\dots, y^{(\ell-1)})$. 
            Let $r^{(i)}$, $i=0,\dots,\ell-1$ be the remainder processes such that:  
   \begin{eqnarray}\label{e.r}
   r^{(k)}_{st} &=& \delta y^{(k)}_{st} - \sum_{i=1}^{\ell-k-1} y^{(k+i)}_{s}x^{i}_{st}\,, 
  \qquad    k=0,1,\dots, \ell-1,
\end{eqnarray}
 for $(s,t) \in \cs_{2}([0,T]) $, where we use the convention that $\sum_{i=1}^{0}=0$.  
We call $\bfy$ \emph{a  rough path controlled by $(x, \ell , \al )$ almost surely} for some   $\al\in(0,1)$  
 if for any $\ep>0$
   there is    
a finite random variable $G_{\bfy}\equiv G_{\bfy,\ep}$     such that   
 $|r^{(k)}_{st}|\leq G_{\bfy} (t-s)^{(\ell-k)\al-\ep}  $  for all $(s,t)\in\cs_{2}([0,T])$ and $k=0, \dots, \ell-1$ almost surely. 
 We call $\bfy $      \emph{a     rough path  controlled by $(x,\ell, \al )$ in $L_{p}$} for some  $p\geq 1$ and $\al\in(0,1)$ if there is a constant $K>0$  such that $|y^{(k)}_{0}|_{L_{p}}\leq K$ and $|r^{(k)}_{st}|_{L_{p}}\leq K (t-s)^{(\ell-k)\al}$ for all $(s,t)\in\cs_{2}([0,T])$  and $k=0,\dots, \ell-1$.  
\end{Def}
  
  \begin{remark}
It follows from the Garsia-Rodemich-Rumsey lemma (see \cite{garsia1970real})   that if a continuous process is         controlled by $(x,\ell, H-\ep )$ in $L_{p}$  for any  $p\geq 1$ and $\ep>0$ then it is also  controlled by $(x,\ell, H )$ almost surely. 
\end{remark}

 In the following we recall an   algebraic  result for controlled rough paths; see e.g. \cite{LT20}.
  \begin{lemma}\label{lem.dr}
Let $x$, $\bfy$ and $r^{(i)}$, $i=0,\dots, \ell-1$ be continuous processes satisfying the algebraic relation \eqref{e.r}. Then    we have the relation: 
\begin{eqnarray}
\delta r^{(i-1)}_{sut} &=& \sum_{j=i+1}^{\ell} r^{(j-1)}_{su} 
x^{j-i}_{ut}\,,
\qquad
 (s,u,t)\in\cs_{3}([0,T]),\quad   i=1,\dots, \ell .
\label{e.dr}
\end{eqnarray}
\end{lemma}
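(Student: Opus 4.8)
The statement is purely algebraic, so the plan is to substitute the defining relation \eqref{e.r} into the three‑point increment $\delta r^{(i-1)}_{sut} = r^{(i-1)}_{st} - r^{(i-1)}_{su} - r^{(i-1)}_{ut}$ and simplify. Writing $k = i-1$ for brevity, the claim reads
\[
\delta r^{(k)}_{sut} = \sum_{m=1}^{\ell-k-1} r^{(k+m)}_{su}\, x^{m}_{ut}, \qquad k = 0,\dots,\ell-1,
\]
with the convention that an empty sum is zero. Two elementary facts will drive the computation. First, $\delta y^{(k)}$ is the increment of a one‑variable function, so $\delta y^{(k)}_{st} - \delta y^{(k)}_{su} - \delta y^{(k)}_{ut} = 0$. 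Second, from $\delta x_{st} = \delta x_{su} + \delta x_{ut}$ and the binomial theorem one obtains the Chen‑type identity $x^{i}_{st} = \sum_{a=0}^{i} x^{a}_{su}\, x^{i-a}_{ut}$, with the convention $x^{0} \equiv 1$.

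First I would expand $\delta r^{(k)}_{sut}$ via \eqref{e.r}; the $\delta y^{(k)}$ contributions cancel by the first fact above, leaving
\[
\delta r^{(k)}_{sut} = -\sum_{i=1}^{\ell-k-1}\big( y^{(k+i)}_{s} x^{i}_{st} - y^{(k+i)}_{s} x^{i}_{su} - y^{(k+i)}_{u} x^{i}_{ut}\big).
\]
Then the Chen identity gives $x^{i}_{st} - x^{i}_{su} = \sum_{a=0}^{i-1} x^{a}_{su}\, x^{i-a}_{ut}$; peeling off the $a=0$ term pairs $y^{(k+i)}_{s} x^{i}_{ut}$ with $-\,y^{(k+i)}_{u} x^{i}_{ut}$, whose sum equals $-\,\delta y^{(k+i)}_{su}\, x^{i}_{ut}$. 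Applying \eqref{e.r} a second time, now to the increment $\delta y^{(k+i)}_{su}$, replaces this by $-\,r^{(k+i)}_{su} x^{i}_{ut} - \sum_{j=1}^{\ell-k-i-1} y^{(k+i+j)}_{s} x^{j}_{su} x^{i}_{ut}$. Collecting all terms, $\delta r^{(k)}_{sut}$ becomes the desired quantity $\sum_{i=1}^{\ell-k-1} r^{(k+i)}_{su} x^{i}_{ut}$ plus the two residual double sums
\[
\sum_{i=1}^{\ell-k-1}\sum_{j=1}^{\ell-k-i-1} y^{(k+i+j)}_{s} x^{j}_{su} x^{i}_{ut}
\quad\text{and}\quad
-\sum_{i=1}^{\ell-k-1} y^{(k+i)}_{s}\sum_{a=1}^{i-1} x^{a}_{su} x^{i-a}_{ut}.
\]

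The crux is to verify that these two residual sums cancel. Reindexing the first one by $p = i+j$ (the order of the $y$‑derivative that appears) turns it into $\sum_{p=2}^{\ell-k-1} y^{(k+p)}_{s}\sum_{j=1}^{p-1} x^{j}_{su} x^{p-j}_{ut}$, while the second, after discarding its empty $i=1$ inner sum and relabelling $i \mapsto p$ and $a \mapsto j$, is the negative of the very same expression; hence they cancel and the identity follows for all $(s,u,t)\in\cs_{3}([0,T])$, which is exactly the assertion of the lemma. The only real source of friction here is the bookkeeping of the nested finite sums — keeping the summation ranges exact so that no boundary term is lost — since there is no analytic content at all. One could alternatively organize the argument by regarding $g^{(i)}_{st} := y^{(k+i)}_{s} x^{i}_{st}$ as a $2$‑increment and invoking the Leibniz‑type rule for $\delta$ of a product, but the direct substitution above seems the most transparent route.
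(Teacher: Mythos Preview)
Your proof is correct: the substitution of \eqref{e.r} into the three-point increment, the use of the binomial/Chen identity $x^{i}_{st}=\sum_{a=0}^{i}x^{a}_{su}x^{i-a}_{ut}$, and the reindexing that shows the two residual double sums cancel are all carried out cleanly and with the right summation ranges. In particular the change of variable $p=i+j$ in the first residual sum and the relabelling $(i,a)\mapsto(p,j)$ in the second do produce identical expressions with opposite signs, so nothing is lost at the boundaries.

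As for comparison with the paper: the paper does not actually prove Lemma~\ref{lem.dr}. It merely states the identity as a known algebraic fact about controlled rough paths and defers to the reference \cite{LT20}. So your direct computation is not an alternative to the paper's argument --- it supplies one where the paper gives none. The approach you take (expand, apply Chen, reinsert \eqref{e.r} for $\delta y^{(k+i)}_{su}$, then match indices) is the standard way such telescoping identities are verified in the controlled-path literature, and would be the natural proof to expect behind the citation.
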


Let us introduce some   ``discrete'' integrals defined as Riemann-Stieltjes  type sums. 
 Namely, let $f$ and $g$ be functions     on   $\cs_{2}([0,T])$ and let $\cp=\{ 0=t_{0}<\cdots<t_{n}=T \}$  be  a generic partition of $[0,T]$.  
  We define  the    discrete  integral of $f$ with respect to $g$ as:
  \begin{eqnarray}\label{e.jfg}
\cj_{s}^{t} (f, g) 
&:=&
\sum_{s\leq t_{k}  <t }   
  f_{\la(s) t_{k}} \otimes g_{t_{k}t_{k+1}} , \quad\quad  (s,t) \in \cs_{2} ([0,T]) ,
\end{eqnarray}
where recall that $\la(s)=t_{k+1}$ for $s\in(t_{k},t_{k+1}]$ and we  use the convention that $\sum_{s\leq t_{k}<t}=0$ when  $\{t_{k}: s\leq t_{k}<t\} = \emptyset$. 
 Similarly, if  $f  $ is a function on $[0,T]$, then we define:
\begin{eqnarray}\label{e.jfg2}
\cj_{s}^{t} (f, g) 
&:=&
\sum_{s\leq t_{k}  <t } 
   f_{ t_{k}} \otimes g_{t_{k}t_{k+1}}  , \quad\quad  (s,t) \in \cs_{2}([0,T]).
\end{eqnarray}

%

\subsection{Elements of Malliavin calculus}

We briefly recall some basic facts of Malliavin calculus. We refer the reader to \cite{nourdin2012normal, N06} for further details.  
Let $\ch$ be a separable Hilbert space with inner product $\langle \cdot, \cdot\rangle_{\ch}$ and norm $|\cdot|_{\ch}=\langle \cdot, \cdot\rangle_{\ch}^{1/2}$. For $q\in\NN$ we denote by $\ch^{\otimes q}$ and $\ch^{\odot q}$, respectively, the $q$th tensor product and the $q$th symmetric tensor product of $\ch$.  Let $X = \{X(h): h\in\ch\}$ be an isonormal Gaussian process over $\ch$, namely, $X$ is a centered Gaussian family defined on some probability space $(\Omega, \cf , P)$ such that $\mE[X(g)X(h)]=\langle g,h \rangle_{\ch}$ for every $g, h\in\ch$.  Let $\mathit{S}$   denote the set of   random variables of the form
$
F=f(W(h_{1}), \dots, W(h_{n}))$,  
where  $n\geq 1$,  $h_{1}, \dots, h_{n}\in\ch$,  and $f:\RR^{m}\to \RR$ is a $C^{\infty}$-function such that $f$ and its partial derivatives have at most polynomial growth. The $l$th Malliavin derivative of $F\in \mathit{S}$ is   a random variable with values in $\ch^{\odot p}$ defined by:
\begin{eqnarray*}
D^{l} F &=& \sum_{i_{1}, \dots, i_{l}=1}^{m} \frac{\partial^{l}f}{\partial x_{i_{1}}\cdots \partial x_{i_{l}} } (X(h_{1}), \dots, X(h_{m})) h_{i_{1}}\otimes \cdots h_{i_{l}}. 
\end{eqnarray*}
For any $p\geq 1$ and any integer $k\geq 1$, we define the Sobolev space $\DD^{k,p}$ as the closure of $\mathit{S}$ with respect to the norm:
\begin{eqnarray*}
\left\| F\right\| _{k,p}^{p}=\mathbb{E}\left[ \left| F\right| ^{p}\right] +
\sum_{l=1}^{k} \mathbb{E} \big[  
 \| D^{l} F \| ^p _{\mathcal{H}^{\otimes l}}
 \big] .
\end{eqnarray*}
We denote by $\delta^{\diamond,k} $ the adjoint of the derivative operator $D^{k}$. We say $u\in\text{Dom}\delta^{\diamond,k}$ if there is a $\delta^{\diamond,k} (u)\in L^{2}(\Omega)$ such that for any $F\in \DD^{k,2}$ the following duality relationship holds:
\begin{eqnarray}
\mE( \langle u, D^{k}F \rangle_{\ch^{\otimes k}} ) = \mE[\delta^{\diamond,k}(u)F]. 
\label{e.ibp}
\end{eqnarray}
When $k=1$ we simply write $\delta^{{\diamond},1}=\delta^{{\diamond}} $. 
 Recall that for  $h\in\ch: |h|_{\ch}=1$ we have the relation:  
 \begin{eqnarray}
\delta^{\diamond,k}(h^{\otimes k}) = H_{k}(\delta^{\diamond}(h)), 
\label{e.dkh}
\end{eqnarray}
where $H_{k}(x)= (-1)^{k}e^{x^{2}/2}\frac{d^{k}}{dx^{k}} e^{-x^{2}/2}$ is called the Hermite polynomial of order $k$. 

%
%
%
%
The following result is  an elaboration of \cite[Proposition 1.3.1]{N06}. 
\begin{lemma}\label{lem.dudv}
Let $h_{1},h_{2}\in\ch$ and $F_{1},F_{2}\in \DD^{1,2}$. Then we have the relation: 
\begin{eqnarray}\label{e.dfhdfh}
\mE[\delta^{\diamond} (F_{1}h_{1})\delta^{\diamond} (F_{2}h_{2})] = \mE[ F_{1}F_{2} ]\langle h_{1},h_{2} \rangle_{\ch} + \mE [ \langle DF_{1}, h_{2} \rangle_{\ch} \langle DF_{2}, h_{1} \rangle_{\ch} ]. 
\end{eqnarray}

\end{lemma}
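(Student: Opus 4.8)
The plan is to establish the identity \eqref{e.dfhdfh} first for smooth functionals and then to pass to the general case $F_{1},F_{2}\in\DD^{1,2}$ by a density argument. Accordingly, I would begin by assuming $F_{1},F_{2}\in\mathit{S}$, so that Malliavin derivatives of every order exist and all the divergences written below are well defined. The computation rests on two ingredients: the duality relation \eqref{e.ibp} with $k=1$, namely $\mE(\langle u,DG\rangle_{\ch})=\mE[\delta^{\diamond}(u)G]$, together with the standard commutation relation $\langle h,D\delta^{\diamond}(u)\rangle_{\ch}=\langle h,u\rangle_{\ch}+\delta^{\diamond}(\langle h,Du\rangle_{\ch})$ between $D$ and $\delta^{\diamond}$ for $\ch$-valued $u$ with enough regularity (see \cite{N06}), where $Du\in\ch\otimes\ch$ and $\langle h,\cdot\rangle_{\ch}$ contracts the differentiation slot.

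The core of the smooth case is two integration by parts. First I would apply \eqref{e.ibp} with $u=F_{1}h_{1}$ and $G=\delta^{\diamond}(F_{2}h_{2})$, which gives
\begin{eqnarray}
\mE[\delta^{\diamond}(F_{1}h_{1})\delta^{\diamond}(F_{2}h_{2})] &=& \mE\big[F_{1}\langle h_{1},D\delta^{\diamond}(F_{2}h_{2})\rangle_{\ch}\big].
\notag
\end{eqnarray}
Since $D(F_{2}h_{2})=DF_{2}\otimes h_{2}$, contracting the differentiation slot against $h_{1}$ yields $\langle h_{1},D(F_{2}h_{2})\rangle_{\ch}=\langle h_{1},DF_{2}\rangle_{\ch}\,h_{2}$, so the commutation relation with $u=F_{2}h_{2}$ reads
\begin{eqnarray}
\langle h_{1},D\delta^{\diamond}(F_{2}h_{2})\rangle_{\ch} &=& F_{2}\langle h_{1},h_{2}\rangle_{\ch}+\delta^{\diamond}\big(\langle h_{1},DF_{2}\rangle_{\ch}\,h_{2}\big).
\notag
\end{eqnarray}
Substituting, the first summand contributes $\mE[F_{1}F_{2}]\langle h_{1},h_{2}\rangle_{\ch}$, the first term of \eqref{e.dfhdfh}. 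For the remaining expectation I would apply \eqref{e.ibp} a second time, now with $u=\langle h_{1},DF_{2}\rangle_{\ch}h_{2}$ and $G=F_{1}$, obtaining
\begin{eqnarray}
\mE\big[F_{1}\delta^{\diamond}(\langle h_{1},DF_{2}\rangle_{\ch}h_{2})\big] &=& \mE\big[\langle DF_{1},h_{2}\rangle_{\ch}\langle DF_{2},h_{1}\rangle_{\ch}\big],
\notag
\end{eqnarray}
which is exactly the second term of \eqref{e.dfhdfh}. This settles the case $F_{1},F_{2}\in\mathit{S}$.

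To remove the smoothness assumption I would argue by density, using that $\mathit{S}$ is dense in $\DD^{1,2}$ and that both sides of \eqref{e.dfhdfh} are continuous bilinear forms in $(F_{1},F_{2})\in\DD^{1,2}\times\DD^{1,2}$. Continuity of the right-hand side is immediate from Cauchy--Schwarz, since $|\mE[F_{1}F_{2}]|\leq|F_{1}|_{L_{2}}|F_{2}|_{L_{2}}$ and $|\mE[\langle DF_{1},h_{2}\rangle_{\ch}\langle DF_{2},h_{1}\rangle_{\ch}]|\leq|h_{1}|_{\ch}|h_{2}|_{\ch}\,\|DF_{1}\|_{L_{2}(\ch)}\|DF_{2}\|_{L_{2}(\ch)}$. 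For the left-hand side, specializing the already-proven smooth identity to $F_{1}=F_{2}=F$ and $h_{1}=h_{2}=h$ gives $\mE[\delta^{\diamond}(Fh)^{2}]=\mE[F^{2}]|h|_{\ch}^{2}+\mE[\langle DF,h\rangle_{\ch}^{2}]\leq|h|_{\ch}^{2}\|F\|_{1,2}^{2}$, so that $F\mapsto\delta^{\diamond}(Fh)$ extends to a bounded operator from $\DD^{1,2}$ into $L^{2}(\Omega)$; hence the left-hand bilinear form is continuous as well. As the two sides agree on the dense set $\mathit{S}\times\mathit{S}$, they agree on all of $\DD^{1,2}\times\DD^{1,2}$.

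The only delicate point is that the commutation relation and the second duality step in the smooth computation tacitly use one more degree of Malliavin differentiability of $F_{2}$ than the statement assumes; this is precisely why the identity must be proved on $\mathit{S}$ and then transferred through the continuity/density argument rather than applied directly for $F_{1},F_{2}\in\DD^{1,2}$. I expect this approximation step, and the verification that $\delta^{\diamond}(Fh)$ depends continuously on $F$ in the $\|\cdot\|_{1,2}$-norm, to be the main (though essentially routine) obstacle; the algebraic heart of the proof is simply the two integration by parts above.
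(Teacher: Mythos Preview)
Your argument is correct, but it follows a different route from the paper. The paper does not integrate by parts directly; instead it quotes the general covariance identity \cite[Proposition~1.3.1]{N06},
\[
\mE[\delta^{\diamond}(u)\delta^{\diamond}(v)]=\mE[\langle u,v\rangle_{\ch}]+\mE\sum_{i,j}D_{e_{i}}\langle u,e_{j}\rangle_{\ch}\,D_{e_{j}}\langle v,e_{i}\rangle_{\ch},
\]
specializes to $u=F_{1}h_{1}$, $v=F_{2}h_{2}$, and then uses bilinearity in $(h_{1},h_{2})$ to reduce to the two cases $h_{1}=h_{2}$ and $h_{1}\perp h_{2}$ (both of unit norm). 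In each case one simply takes an orthonormal basis containing $h_{1}$ (and $h_{2}$), so the double sum collapses to the single term $\mE[\langle DF_{1},h_{2}\rangle_{\ch}\langle DF_{2},h_{1}\rangle_{\ch}]$. The paper's approach is shorter once Nualart's proposition is accepted as a black box, and it needs no separate density step since that proposition is already stated for $u,v\in\DD^{1,2}(\ch)$. Your approach, by contrast, is more self-contained: the two integrations by parts together with the commutation $D\delta^{\diamond}=\mathrm{id}+\delta^{\diamond}D$ reproduce the identity directly, at the price of the (routine) $\mathit{S}\to\DD^{1,2}$ closure argument you spell out.
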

\begin{proof}
By linearity of relation \eqref{e.dfhdfh} in $h_{1}$  and $h_{2}$  in order to show \eqref{e.dfhdfh} it suffices to consider two cases: (i)   $h_{1}=h_{2}$ and (ii) $h_{1}\perp\!\!\!\perp h_{2}$ such that $|h_{1}|_{\ch}=|h_{2}|_{\ch}=1$.


According to  \cite[Proposition 1.3.1]{N06}, for $u,v\in\DD^{1,2}(\ch)$ and   a complete orthonormal system  $\{e_{i},i\in\NN\}$ of $\ch$  we have the relation 
\begin{eqnarray}\label{e.dudv}
\mE[\delta^{\diamond} (u)\delta^{\diamond} (v)] = \mE [\langle u,v \rangle_{\ch}] + \mE   \sum_{i,j=1}^{\infty} [D_{e_{i}} \langle u, e_{j}\rangle_{\ch}\cdot
D_{e_{j}} \langle v, e_{i}\rangle_{\ch}]
   .  
\end{eqnarray}

In Case (i),  we take $ e_{1}=h_{1}=h_{2}$ and let  $\{e_{i},i\in\NN\}$ be a complete orthonormal system of $\ch$. Take $u=F_{1}h_{1}$, and   $v=F_{2}h_{2}$   in \eqref{e.dudv}.   A direct computation shows that  
\begin{eqnarray*}
\mE   \sum_{i,j=1}^{\infty} [D_{e_{i}} \langle u, e_{j}\rangle_{\ch}\cdot
D_{e_{j}} \langle v, e_{i}\rangle_{\ch}]
   = \mE \lc D_{e_{1}} \langle u, e_{1}\rangle_{\ch}\cdot
D_{e_{1}} \langle v, e_{1}\rangle_{\ch}
 \rc
 = \mE[D_{h_{1}}F_{1}D_{h_{2}}F_{2}].
\end{eqnarray*}
Substituting this into  \eqref{e.dudv} gives the relation \eqref{e.dfhdfh}. 
In Case (ii) we   take $ e_{1}=h_{1}$ and    $ e_{2}=h_{2}$. As before,  by  taking $u=F_{1}h_{1}$,  $v=F_{2}h_{2}$  in \eqref{e.dudv}   we obtain    the relation \eqref{e.dfhdfh}.
\end{proof}


  Let $x$ be a standard  fractional Brownian motion (fBm for short) with Hurst parameter $H\in(0,1)$, that is $x$ is a continuous Gaussian process with covariance function 
$
\mE[x_{s}x_{t}] = \frac12 (|s|^{2H}+|t|^{2H}-|s-t|^{2H})$. 
Recall that the fBm  $x$ is almost surely $\ga$-H\"older continuous for all $\ga<H$.  
 Let $\ch$ be the completion of the space of indicator functions with respect to the inner product $\langle \mathbf{1}_{[u,v]},  \mathbf{1}_{[s,t]}  \rangle_{\ch} =\mE(\delta x_{uv} \delta x_{st})$.  
 In the following we   state an estimate  result for indicate functions which we make extensive use of. 
 
\begin{lemma}\label{lem.abcd}
Let $\rho$ be the function $\rho(x)=|x|\vee1$ for $x\in\RR$. 
There exists a constant $K>0$ such that   the following estimate holds: 
\begin{eqnarray}
\left|
\langle \mathbf{1}_{[s,t]}, \mathbf{1}_{[u,v]} \rangle_{\ch} \right|&\leq& K|v-u|^{2H} \cdot\lp
 [|s'|^{2H-1}+|t'|^{2H-1}] \wedge 2^{2H-1}\rp
\label{e.abbd}
 \\
&\leq&K|v-u|^{2H} \cdot\lp \rho(s')^{2H-1}+ \rho(t')^{2H-1}\rp \label{e.abbd2}
\end{eqnarray}
for  all $(s,t)$ and $ (u,v)\in\cs_{2}(\RR)  $, where we   denote   $s' = \frac{s-u}{v-u} $ and $t'=\frac{t-u}{v-u}$.  

\end{lemma}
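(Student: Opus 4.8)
The plan is to establish the two inequalities in \eqref{e.abbd} and then derive \eqref{e.abbd2} as an easy consequence. First I would write out the inner product explicitly using the covariance of fBm. By bilinearity and the definition of $\ch$, one has
\begin{eqnarray*}
\langle \mathbf{1}_{[s,t]}, \mathbf{1}_{[u,v]} \rangle_{\ch}
= \mE\big[(x_{t}-x_{s})(x_{v}-x_{u})\big]
= \frac12\big(|t-u|^{2H}-|t-v|^{2H}-|s-u|^{2H}+|s-v|^{2H}\big).
\end{eqnarray*}
The key observation is to introduce the function $\phi(a) = |a|^{2H}$ and write the right-hand side as a second difference of $\phi$. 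Indeed, setting $\psi(a) = \phi(a-u) - \phi(a-v) = |a-u|^{2H} - |a-v|^{2H}$, the quantity above equals $\tfrac12(\psi(t) - \psi(s))$. Then by the mean value theorem $\psi(t)-\psi(s) = \psi'(\xi)(t-s)$ for some $\xi$ between $s$ and $t$, and $\psi'(\xi) = 2H(|\xi-u|^{2H-1}\mathrm{sgn}(\xi-u) - |\xi-v|^{2H-1}\mathrm{sgn}(\xi-v))$; but for the clean bound one wants a slightly different route, so instead I would work directly with the rescaling $a \mapsto (a-u)/(v-u)$.

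Second, I would reduce to the normalized case by scaling. Writing $s' = (s-u)/(v-u)$, $t' = (t-u)/(v-u)$, and using the homogeneity $|ca|^{2H} = |c|^{2H}|a|^{2H}$, one gets
\begin{eqnarray*}
\langle \mathbf{1}_{[s,t]}, \mathbf{1}_{[u,v]} \rangle_{\ch}
= \frac{|v-u|^{2H}}{2}\big(|t'|^{2H}-|t'-1|^{2H}-|s'|^{2H}+|s'-1|^{2H}\big),
\end{eqnarray*}
so it suffices to bound $g(s',t') := |t'|^{2H}-|t'-1|^{2H}-|s'|^{2H}+|s'-1|^{2H}$ in absolute value by $K(|s'|^{2H-1}+|t'|^{2H-1})$ and also by a universal constant $K \cdot 2^{2H-1}$ (this last being trivial: each term $|t'|^{2H}-|t'-1|^{2H}$ is the increment of $\phi$ over an interval of length $1$, hence bounded in modulus by $\sup_{a}||a|^{2H}-|a-1|^{2H}|$, which for $H<1/2$ is attained and equals a constant of order $1$). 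For the first bound, fix $t'$ and consider $h(a) := |a|^{2H} - |a-1|^{2H}$; then $g(s',t') = h(t') - h(s')$ and $h'(a) = 2H(\mathrm{sgn}(a)|a|^{2H-1} - \mathrm{sgn}(a-1)|a-1|^{2H-1})$. The function $|a|^{2H-1}$ is decreasing in $|a|$ for $H<1/2$, so I would split the range of the integration variable $\xi$ in $h(t')-h(s') = \int_{s'}^{t'}h'(\xi)d\xi$ according to whether $|\xi|$ or $|\xi-1|$ is the relevant small quantity, and bound $|h'(\xi)| \lesssim |\xi|^{2H-1} + |\xi-1|^{2H-1}$; integrating and using that $\int |\xi|^{2H-1}d\xi$ over an interval is controlled by the value of $|\cdot|^{2H}$ at its endpoints, together with elementary comparisons of $|s'|, |t'|, |s'-1|, |t'-1|$, yields $|g(s',t')| \lesssim |s'|^{2H-1}+|t'|^{2H-1}$. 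Combining the two bounds via $A \wedge B$ gives \eqref{e.abbd}.

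Third, \eqref{e.abbd2} follows from \eqref{e.abbd} at once: since $\rho(a) = |a|\vee 1 \geq 1$ and $2H-1<0$, we have $\rho(a)^{2H-1} \leq 1$, and also $\rho(a)^{2H-1} \leq |a|^{2H-1}$ when $|a|\leq 1$; hence $|a|^{2H-1}\wedge 2^{2H-1} \lesssim \rho(a)^{2H-1}$ for each of $a = s'$ and $a = t'$ after absorbing constants, so the minimum of $[|s'|^{2H-1}+|t'|^{2H-1}]$ and $2^{2H-1}$ is $\lesssim \rho(s')^{2H-1}+\rho(t')^{2H-1}$.

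The main obstacle I anticipate is the careful case analysis in the second step: the four points $0, 1, s', t'$ can be arranged in many orders (with $s'$ or $t'$ possibly negative, between $0$ and $1$, or larger than $1$), and one must be careful that the singularities of $|\cdot|^{2H-1}$ at $0$ and $1$ are genuinely integrable and that the resulting bound is uniform. A clean way to organize this is to prove the pointwise derivative bound $|h'(\xi)| \le 2H(|\xi|^{2H-1} + |\xi-1|^{2H-1})$ valid for all $\xi \notin \{0,1\}$, then note $\big|\int_{s'}^{t'} |\xi-c|^{2H-1} d\xi\big| = \tfrac{1}{2H}\big||t'-c|^{2H} - |s'-c|^{2H}\big| \le \tfrac{1}{2H}|t'-s'|^{2H}\cdot(\text{something})$ — but in fact the quickest finish is to bound $\big|\int_{s'}^{t'}|\xi - c|^{2H-1}d\xi\big|$ by $\tfrac{1}{2H}\big(|t'-c|^{2H}+|s'-c|^{2H}\big)$ when $c$ lies outside $(s',t')$ and by $\tfrac{2}{2H}\big(|t'-c|^{2H}+|s'-c|^{2H}\big)$ otherwise, then use $|b|^{2H} \le |b|^{2H-1}$ for $|b|\le 1$ together with the already-established universal bound to handle the regime where $|s'-c|$ or $|t'-c|$ exceeds $1$. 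This makes the argument a finite, mechanical check rather than a delicate limiting computation.
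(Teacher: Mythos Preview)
Your reduction by scaling is correct and matches the paper's first step, and your universal bound via $|g(s',t')|\le |h(s')|+|h(t')|\le 2$ is fine. The gap is in your derivation of the decay bound $|g(s',t')|\lesssim |s'|^{2H-1}+|t'|^{2H-1}$. Writing $g(s',t')=\int_{s'}^{t'}h'(\xi)\,d\xi$ and then passing to $\int_{s'}^{t'}|h'(\xi)|\,d\xi$ destroys the cancellation in $h'$ and yields, after integrating, a bound of order $|s'|^{2H}+|t'|^{2H}+|s'-1|^{2H}+|t'-1|^{2H}$. For large $|s'|,|t'|$ this is of size $|t'|^{2H}$, not $|t'|^{2H-1}$, so it is the wrong power. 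Your fallback to the universal bound does not rescue this: the regime where both $|s'|,|t'|\ge 2$ is precisely where $|s'|^{2H-1}+|t'|^{2H-1}<2^{2H-1}$, so the minimum in \eqref{e.abbd} is the decay term and you must actually establish it there.

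The fix is one line and is essentially what the paper does. Do not integrate $h'$ over $[s',t']$; instead use the triangle inequality $|g(s',t')|\le |h(s')|+|h(t')|$ and bound each $|h(a)|=\big||a|^{2H}-|a-1|^{2H}\big|$ directly. For $|a|\ge 2$ the points $a$ and $a-1$ lie on the same side of the origin, so the mean value theorem applied to $\phi(x)=|x|^{2H}$ gives $|h(a)|=2H|\xi|^{2H-1}$ for some $\xi$ between $a-1$ and $a$, whence $|h(a)|\lesssim |a|^{2H-1}$. The paper phrases this as $|\langle \mathbf{1}_{(-\infty,t']},\mathbf{1}_{[0,1]}\rangle_\ch|\lesssim |t'|^{2H-1}$ and handles the two geometric cases $(s',t')\cap(0,1)=\emptyset$ and $(0,1)\subset(s',t')$ separately, but the content is the same. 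Your Step~3 (passing from \eqref{e.abbd} to \eqref{e.abbd2}) is fine once you note $[A+B]\wedge C\le (A\wedge C)+(B\wedge C)$ and $|a|^{2H-1}\wedge 2^{2H-1}\lesssim \rho(a)^{2H-1}$.
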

\begin{proof}
By self-similarity and stationarity of increments of fBm we have
\begin{eqnarray}\label{e.abcd}
\langle \mathbf{1}_{[s,t]}, \mathbf{1}_{[u,v]} \rangle_{\ch} = |v-u|^{2H} \langle \mathbf{1}_{[s',t']}, \mathbf{1}_{[0,1]} \rangle_{\ch}.
\end{eqnarray}
An elementary estimation gives: 
\begin{eqnarray*}
|\langle \mathbf{1}_{[s',t']}, \mathbf{1}_{[0,1]} \rangle_{\ch}|
&\leq&
|\langle \mathbf{1}_{[s',t']\cap [0,1]}, \mathbf{1}_{[0,1]} \rangle_{\ch}|+|\langle \mathbf{1}_{[s',t']\setminus [0,1]}, \mathbf{1}_{[0,1]} \rangle_{\ch}|
 \\
&\leq&   1+1 \leq 4\cdot 2^{2H-1}. 
\end{eqnarray*}
Substituting this into \eqref{e.abcd} we get: 
\begin{eqnarray}
\left|
\langle \mathbf{1}_{[s,t]}, \mathbf{1}_{[u,v]} \rangle_{\ch} \right|&\leq& 4\cdot 2^{2H-1} |v-u|^{2H} . 
\label{e.abbd1}
\end{eqnarray}
The estimate \eqref{e.abbd1} shows that the relation \eqref{e.abbd} holds when $|s'|^{2H-1}+|t'|^{2H-1}\geq 2^{2H-1}$.

It remains to prove  that  \eqref{e.abbd} holds under the condition that $|s'|^{2H-1}+|t'|^{2H-1}\leq 2^{2H-1}$. Note that this condition implies that $|s'|\geq 2$ and $|t'|\geq 2$, and so we must have either (i) $(s',t')\cap (0,1)=\emptyset$ or  (ii) 
$(0,1)\subset (s',t') $.

Assume first that $(s',t')\cap (0,1)=\emptyset$. Then  either   $t'\leq 2$ or $s'\geq 2$.  When    $t'\leq 2$,    we have
\begin{eqnarray}\label{e.abp}
|\langle \mathbf{1}_{[s',t']}, \mathbf{1}_{[0,1]} \rangle_{\ch}| \leq  |\langle \mathbf{1}_{(-\infty,t']}, \mathbf{1}_{[0,1]} \rangle_{\ch}| \lesssim
|t'|^{2H-1},   
\end{eqnarray}
which together with \eqref{e.abcd} gives the estimate \eqref{e.abbd}. 
In the same way we can show that  \eqref{e.abbd} also holds when     $s'\geq 2$. 

   Suppose now  that $(0,1)\subset (s',t') $. In this case we have:  
   \begin{eqnarray}
\langle \mathbf{1}_{[s',t']}, \mathbf{1}_{[0,1]} \rangle_{\ch} = \langle \mathbf{1}_{(-\infty,s' ]}, \mathbf{1}_{[0,1]} \rangle_{\ch}+\langle \mathbf{1}_{[t',\infty )}, \mathbf{1}_{[0,1]} \rangle_{\ch}. 
\label{e.abp2}
\end{eqnarray}
Since $(-\infty,s' )\cap (0,1)=\emptyset$ and $(t', \infty )\cap (0,1)=\emptyset$  we can   apply \eqref{e.abp} to the terms in the right-hand side of \eqref{e.abp2}. This gives the estimate: 
\begin{eqnarray}
|\langle \mathbf{1}_{[s',t']}, \mathbf{1}_{[0,1]} \rangle_{\ch} |\lesssim |s'|^{2H-1}+|t'|^{2H-1}. 
\label{e.abp3}
\end{eqnarray}
The estimate  \eqref{e.abp3}   together with \eqref{e.abcd} gives the estimate \eqref{e.abbd}. 
In summary,  we have obtained that  \eqref{e.abbd} holds for both cases (i) and   (ii). The proof is now complete. 
\end{proof}

\subsection{Some lemmas}\label{section.lem}
In this subsection we prove some technical lemmas. Our first result    provides  convenient  enlargements for   certain upper-bound estimates. 
\begin{lemma}\label{lem.stnbd}
Let    $A$ be a function from $\cs_{2}(\ll0,T\rr)\times \NN $ to $ \RR$. Assume that there exists   constants  $  \al>0$ and $ \be,\ga, K_{1}\geq0$    such that 
\begin{eqnarray}
A(s,t,n)\leq K_{1}(1/n)^{\al}(t-s)^{\be}[\log (n(t-s))]^{\ga}
\label{e.abd1}
\end{eqnarray}
for all $(s,t,n)\in\cs_{2}(\ll0,T\rr)\times \NN$. Then for any $\ep>0 $   there is a constant $K_{2}>0$ such that  
\begin{eqnarray}
A(s,t,n)\leq K_{2}(1/n)^{\al-\ep}(t-s)^{\be+\ep} 
\label{e.abd5}
\end{eqnarray}
for all  $(s,t,n)\in\cs_{2}(\ll0,T\rr)\times \NN$. Furthermore, we have the convergence:
\begin{eqnarray}
\lim_{n\to\infty} \sup_{(s,t)\in\cs_{2}(\ll0,T\rr)} n^{\al-\ep}  A(s,t,n) =0.
\label{e.alim}
\end{eqnarray}

\end{lemma}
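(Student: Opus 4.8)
The plan is to \emph{absorb the logarithmic factor} in \eqref{e.abd1} into an arbitrarily small power of $n(t-s)$; this costs only an $\ep$-shift in the exponents of $1/n$ and of $t-s$ and yields \eqref{e.abd5}, from which \eqref{e.alim} then follows by using $t-s\le T$. The whole argument is elementary real analysis.

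The one ingredient I would isolate is the following calculus fact: for every $\delta>0$ and every $\ga\ge0$,
\begin{eqnarray*}
\sup_{z\ge 1}\, z^{-\delta}(\log z)^{\ga} &=& \Big(\frac{\ga}{\delta e}\Big)^{\ga}\;=:\;C_{\delta,\ga}\;<\;\infty ,
\end{eqnarray*}
with the convention $C_{\delta,0}=1$; indeed, maximising $z\mapsto (\log z)\,z^{-\delta/\ga}$ over $z\ge1$ (for $\ga>0$) gives the maximiser $z=e^{\ga/\delta}$ and the stated value. Here I use that, for the partitions $\cp_{n}$ in force, $n(t-s)\ge 1$ for every $(s,t)\in\cs_{2}(\ll0,T\rr)$, so that $z=n(t-s)$ is an admissible argument above (and in particular the bracket $[\log(n(t-s))]^{\ga}$ is a nonnegative real number).

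To prove \eqref{e.abd5}, fix $\ep>0$ and simply factor the right-hand side of \eqref{e.abd1}:
\begin{eqnarray*}
K_{1}(1/n)^{\al}(t-s)^{\be}[\log(n(t-s))]^{\ga} &=& K_{1}\,(1/n)^{\al-\ep}(t-s)^{\be+\ep}\cdot (n(t-s))^{-\ep}[\log(n(t-s))]^{\ga}.
\end{eqnarray*}
By the calculus fact with $\delta=\ep$, the last factor is bounded by $C_{\ep,\ga}$ uniformly in $(s,t,n)$, so \eqref{e.abd5} holds with $K_{2}=K_{1}C_{\ep,\ga}$. Note that $\be+\ep\ge0$ is automatic from $\be\ge0$, and that nothing in this step requires $\al-\ep>0$.

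Finally, for \eqref{e.alim} I would apply \eqref{e.abd5} with $\ep/2$ in place of $\ep$, obtaining $K_{2}'>0$ with $A(s,t,n)\le K_{2}'(1/n)^{\al-\ep/2}(t-s)^{\be+\ep/2}$ for all $(s,t,n)$; since $0\le t-s\le T$ and $\be+\ep/2\ge0$, this gives $A(s,t,n)\le K_{2}'T^{\be+\ep/2}n^{-\al+\ep/2}$ uniformly in $(s,t)$, and therefore
\begin{eqnarray*}
\sup_{(s,t)\in\cs_{2}(\ll0,T\rr)} n^{\al-\ep}A(s,t,n) &\le& K_{2}'\,T^{\be+\ep/2}\,n^{-\ep/2}\;\longrightarrow\;0\qquad(n\to\infty).
\end{eqnarray*}
I do not expect a genuine obstacle: the only point deserving a word of care is the interpretation of the logarithmic term when $n(t-s)$ is close to $1$, and this is precisely what the standing lower bound $n(t-s)\ge1$ takes care of, so that the calculus fact applies verbatim.
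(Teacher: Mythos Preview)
Your proof is correct and follows essentially the same approach as the paper's own proof: both absorb the logarithmic factor $[\log(n(t-s))]^{\ga}$ into a small power $(n(t-s))^{\ep}$ to obtain \eqref{e.abd5}, and then derive \eqref{e.alim} by applying \eqref{e.abd5} with $\ep/2$ and bounding $t-s\le T$. Your version is slightly more explicit (you compute the supremum constant $C_{\ep,\ga}$ and flag the lower bound $n(t-s)\ge 1$), whereas the paper simply invokes $(\log x)^{\ga}/x^{\ep}\to 0$ as $x\to\infty$, but the two arguments are otherwise identical.
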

\begin{proof}
Since $(\log x)^{\ga}/x^{\ep}\to 0$ as $x\to\infty$,  we can find a constant $K>0$ such that  
\begin{eqnarray}
[\log (n(t-s))]^{\ga} \leq   K (n(t-s))^{\ep} 
\label{e.abd4}
\end{eqnarray}
for all  $(s,t,n)\in\cs_{2}(\ll0,T\rr)\times \NN$.
Substituting \eqref{e.abd4} into the right-hand side of \eqref{e.abd1} we obtain \eqref{e.abd5}. To see that \eqref{e.alim} holds, replacing $\ep$ by $\ep/2$ in \eqref{e.abd5} to get: 
\begin{eqnarray*}
A(s,t,n)\leq K_{2}(1/n)^{\al-\ep/2}(t-s)^{\be+\ep/2} .
\end{eqnarray*}
This gives 
\begin{eqnarray*}
n^{\al-\ep}A(s,t,n) \leq  K_{2}(1/n)^{\ep/2}(t-s)^{\be+\ep/2}
 \leq  K_{2}(1/n)^{\ep/2}T^{\be+\ep/2}.  
\end{eqnarray*}
Now sending $n\to\infty$ we obtain the convergence \eqref{e.alim}. 
\end{proof}

In the  following we state  an elementary   result for $p$-series. 
\begin{lemma}\label{lem.rho}
Let $\rho$ be the function defined by: $\rho(x)=|x|\vee 1$, $x\in\RR$. Take $\al\leq 0$. 

\noindent\emph{(i)} 
There exists a constant $K>0$ such that   the following estimate holds: 
\begin{eqnarray}
\sum_{k: \, sn\leq k<tn} \rho(k-k')^{\al} &\leq& 
\begin{cases}
K(n(t-s))^{\al+1}&\al>-1\\
K\log (n(t-s))&\al=-1\\
K&\al<-1
\end{cases}
\label{e.rhobd}
\end{eqnarray}
for all $(s,t)\in\cs_{2}([0,T])$ and $k'\in[0,n]$. 

\noindent\emph{(ii)} When $k'\not\in[sn,tn)$ and $\al<-1$ we   have the estimate:
\begin{eqnarray}
\sum_{k: \,sn\leq k<tn} \rho(k-k')^{\al} &\leq& 
\begin{cases}
K\rho(k'-ns)^{\al+1} &k'<ns.
\\
K\rho(k'-nt)^{\al+1} &k'\geq nt.
\end{cases}
\label{e.rhobd3}
\end{eqnarray}

\end{lemma}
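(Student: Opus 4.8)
The plan is to prove Lemma~\ref{lem.rho} by reducing the sums over integers to comparisons with integrals and with $p$-series of the form $\sum m^{\al}$, treating the three ranges of $\al$ separately in part (i) and the two sub-cases of part (ii) in turn.

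\textbf{Part (i).} First I would observe that the summation set $\{k\in\ZZ:\, sn\le k<tn\}$ contains at most $n(t-s)+1$ integers, and that shifting the index by the fixed value $k'$ does not change the cardinality. Splitting according to whether $|k-k'|\le 1$ or $|k-k'|>1$, the ``near'' part contributes at most a bounded number of terms each of size $1$, so it is absorbed into the constant $K$ in every case (note $n(t-s)\ge $ a positive constant is \emph{not} assumed, but when $n(t-s)$ is small the whole sum has $O(1)$ terms, so the bound is trivial; I would dispose of this degenerate regime at the outset). For the ``far'' part I would bound $\sum_{m} \rho(m)^{\al}=\sum_{|m|\ge 2}|m|^{\al}$ over an index window of length $\le n(t-s)$ by the corresponding integral $\int |u|^{\al}\,du$ over an interval of length $\lesssim n(t-s)$: when $\al>-1$ this integral is $\lesssim (n(t-s))^{\al+1}$ (worst case when the window abuts the origin, where $\int_0^{L}u^{\al}du = L^{\al+1}/(\al+1)$); when $\al=-1$ it is $\lesssim \log(n(t-s))$ (again worst case $\int_1^{L}u^{-1}du=\log L$, using $n(t-s)\ge c>0$ after the degenerate regime is removed); when $\al<-1$ the tail $\sum_{|m|\ge 2}|m|^{\al}$ converges and is $\lesssim 1$ uniformly. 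This yields \eqref{e.rhobd}.

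\textbf{Part (ii).} Here $k'\notin[sn,tn)$ and $\al<-1$, so every index $k$ in the sum satisfies either $k-k'\ge ns-k'>0$ (when $k'<ns$) or $k'-k\ge k'-nt+1>0$... more precisely, if $k'<ns$ then for all $k$ in the range $k-k'> ns-k'>0$, hence $\rho(k-k')=\max(k-k',1)\ge \max(\lceil ns-k'\rceil,1)$, and the terms are dominated by a tail $\sum_{m\ge M}m^{\al}$ with $M\asymp \rho(ns-k')$. For $\al<-1$ one has $\sum_{m\ge M}m^{\al}\lesssim M^{\al+1}$ by integral comparison, giving $\lesssim \rho(k'-ns)^{\al+1}$. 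The case $k'\ge nt$ is symmetric: then $k'-k\ge k'-nt+1>0$ for every $k<tn$ in the range, and the same tail estimate gives $\lesssim \rho(k'-nt)^{\al+1}$. This establishes \eqref{e.rhobd3}.

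\textbf{Main obstacle.} The only genuinely delicate point is bookkeeping at the boundary between ``the sum has only $O(1)$ terms'' (small $n(t-s)$) and ``the sum is long'': in the $\al=-1$ case the claimed bound $K\log(n(t-s))$ is negative or undefined when $n(t-s)<1$, so one must argue that in that regime the sum is $O(1)$ and the stated inequality is vacuous or should be read with the convention that the right-hand side is replaced by a constant — I would handle this by noting $\cs_2([0,T])$ together with $k'\in[0,n]$ forces $t-s\le T$, and treating $n(t-s)\le e$ and $n(t-s)>e$ separately so that $\log(n(t-s))\ge 1$ in the main regime. Everything else is a routine integral-comparison estimate and I expect it to be short.
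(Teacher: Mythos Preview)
Your proposal is correct and follows essentially the same approach as the paper: reduce the sum $\sum \rho(k-k')^{\al}$ to an integral $\int x^{\al}\,dx$ over an interval of length $\lesssim n(t-s)$, and read off the three regimes of $\al$; for part (ii) bound the shifted sum by the tail $\int_{M}^{\infty}x^{\al}\,dx\lesssim M^{\al+1}$ with $M\asymp\rho(k'-ns)$ or $\rho(k'-nt)$. The paper organises the reduction by first splitting on whether $k'\in[sn,tn)$ (change of variables to land on $\sum_{0\le k<L}\rho(k)^{\al}$ or $\sum_{a\le k<b}\rho(k)^{\al}$ with $a>0$) rather than your near/far split, but the two decompositions are interchangeable and the integral comparison is identical. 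Your remark about the degenerate regime $n(t-s)\le e$ in the $\al=-1$ case is a fair bookkeeping point that the paper glosses over; it is indeed harmless for the reason you give.
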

\begin{proof}
We first consider the case when $k' \in[sn,tn)$. Write:     
\begin{eqnarray}
\sum_{k: \,sn\leq k<tn} \rho(k-k')^{\al} &=&\sum_{k: \,k'\leq k<tn} \rho(k-k')^{\al} +\sum_{k: \,sn\leq k<k'} \rho(k-k')^{\al}.
\label{e.rhobd1}
\end{eqnarray}
Applying  a change of variables to each sum in the right-hand side of \eqref{e.rhobd1} we have 
\begin{eqnarray*}
\sum_{k: \,sn\leq k<tn} \rho(k-k')^{\al}  = \sum_{k: \,0\leq k<tn-k'} \rho(k)^{\al} +\sum_{k: \,0< k\leq k'-ns} \rho(k)^{\al}  . 
\end{eqnarray*}
It follows that 
\begin{eqnarray}
\sum_{k: \,sn\leq k<tn} \rho(k-k')^{\al}  \leq  2\sum_{k: \,0\leq k<(t-s)n} \rho(k)^{\al}
\leq 2\int_{1}^{(t-s)n} x^{\al}dx . 
\label{e.rhobd2}
\end{eqnarray}
 The estimate \eqref{e.rhobd3} then follows. 
 
 Assume now that  $k'<sn $. As before, a change of variables gives:  
\begin{eqnarray}
\sum_{k: \,sn\leq k<tn} \rho(k-k')^{\al}  = \sum_{k: \,sn-k'\leq k<tn-k'} \rho(k)^{\al}
\leq  \int_{sn-k'}^{tn-k'} x^{\al}dx
 .   
\label{e.rhobd4}
\end{eqnarray}
It follows that   the estimate \eqref{e.rhobd} holds. In the same way we can prove \eqref{e.rhobd}  for  $k'\geq tn$.  In summary, we have proved  Lemma \ref{lem.rho} (i).   
 


To see that   \eqref{e.rhobd3} is true, we note that when $\al<-1$ the right-hand side of \eqref{e.rhobd4} can be bounded by $ \int_{sn-k'}^{\infty} x^{\al}dx$, and thus by $ \rho(k'-sn)^{\al+1}$.   
 The estimate \eqref{e.rhobd3} then follows. In the same way we can prove the estimate for $k'>tn$. 
\end{proof}

We end the section by stating the following classical result. 
\begin{lemma}\label{lem.limlim}
Let $U_{n}, U_{\infty}, V_{n,M}, \wt{V}_{n,M}$, $M,n\in\NN$ be   random variables with values in $\RR^{N}$, and assume  that $U_{n}=V_{n,M}+ \wt{V}_{n,M}$ for all $M,n\in\NN$.  Suppose that  
\begin{eqnarray}
\lim_{M\to\infty}\lim_{n\to\infty}P(V_{n,M}\leq x) =P(U_{\infty}\leq x), 
\qquad x\in\RR^{N}. 
\label{e.vnmconv}
\end{eqnarray}
 Denote $\wt{V}_{M,n}= (\wt{V}_{M,n}^{1}, \dots, \wt{V}_{M,n}^{N})$ and assume  that 
\begin{eqnarray}
\limsup_{M\to\infty} \limsup_{n\to\infty}P(|\wt{V}_{n,M}^{i}|\geq \ep) \to 0
\qquad
\text{as $\ep\to0$ }
\label{e.limlim}
\end{eqnarray}
for all $i=1,\dots, N$. 
 Then for any   $x\in\RR^{N}$ such that $P(U_{\infty}=x)=0$  we have 
 \begin{eqnarray}
\lim_{n\to\infty}P(U_{n}\leq x) = P(U_{\infty}\leq x) .
\label{e.ulim}
\end{eqnarray} 
\end{lemma}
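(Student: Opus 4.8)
The plan is to prove Lemma \ref{lem.limlim} by a standard $3\varepsilon$-type argument combining the convergence of the ``main part'' $V_{n,M}$ with the uniform smallness of the ``remainder'' $\wt V_{n,M}$. The statement is essentially a quantitative version of the classical fact that a sequence which is a small perturbation of a convergent-in-distribution sequence converges to the same limit; the extra wrinkle here is the double-limit structure in $M$ and $n$, so we must choose $M$ first (large, depending on $\varepsilon$) and only then send $n\to\infty$.

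First I would fix $x\in\RR^N$ with $P(U_\infty = x)=0$ and $\ep>0$. Since the distribution function of $U_\infty$ has no atom at $x$ and is continuous from the right, I can pick $\de>0$ so small that $P(U_\infty \le x+\de\mathbf{1}) \le P(U_\infty\le x)+\ep$ and $P(U_\infty\le x-\de\mathbf{1})\ge P(U_\infty\le x)-\ep$, where $\mathbf 1=(1,\dots,1)$; this uses only that $y\mapsto P(U_\infty\le y)$ is continuous at the (finitely many) relevant points, which follows from $P(U_\infty = x)=0$ together with a further shrinking of $\de$ if some coordinate hyperplane carries mass (one can always avoid such bad $\de$ since only countably many are bad). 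Next, for the event $\{U_n\le x\}$ I use the inclusion
\begin{eqnarray*}
\{V_{n,M}\le x-\de\mathbf 1\} \setminus \textstyle\bigcup_{i=1}^N\{|\wt V_{n,M}^i|\ge \de\}
\;\subseteq\; \{U_n\le x\}
\;\subseteq\; \{V_{n,M}\le x+\de\mathbf 1\}\cup \textstyle\bigcup_{i=1}^N\{|\wt V_{n,M}^i|\ge \de\},
\end{eqnarray*}
which is immediate from $U_n = V_{n,M}+\wt V_{n,M}$ coordinatewise. Hence
\begin{eqnarray*}
P(V_{n,M}\le x-\de\mathbf 1) - \sum_{i=1}^N P(|\wt V_{n,M}^i|\ge\de)
\;\le\; P(U_n\le x)
\;\le\; P(V_{n,M}\le x+\de\mathbf 1) + \sum_{i=1}^N P(|\wt V_{n,M}^i|\ge\de).
\end{eqnarray*}

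Then I take $\limsup_{n\to\infty}$ and $\liminf_{n\to\infty}$ of the two sides. By hypothesis \eqref{e.vnmconv}, for any fixed $M$ the quantities $P(V_{n,M}\le x\pm\de\mathbf 1)$ converge as $n\to\infty$ to something that, after a further $M\to\infty$ limit, equals $P(U_\infty\le x\pm\de\mathbf 1)$; and by \eqref{e.limlim} the error terms $\sum_i P(|\wt V_{n,M}^i|\ge\de)$ have $\limsup_{M}\limsup_{n}$ equal to some quantity $\theta(\de)$ with $\theta(\de)\to 0$ as $\de\to 0$. Taking $\limsup_{n}$ then $\limsup_{M}$ in the upper bound and $\liminf_n$ then $\liminf_M$ (equivalently $-\limsup$) in the lower bound yields
\begin{eqnarray*}
P(U_\infty\le x-\de\mathbf 1) - \theta(\de) \;\le\; \liminf_{n\to\infty}P(U_n\le x)
\;\le\; \limsup_{n\to\infty}P(U_n\le x) \;\le\; P(U_\infty\le x+\de\mathbf 1)+\theta(\de).
\end{eqnarray*}
Combining with the choice of $\de$ from the first step gives $P(U_\infty\le x)-\ep-\theta(\de)\le \liminf_n P(U_n\le x)\le \limsup_n P(U_n\le x)\le P(U_\infty\le x)+\ep+\theta(\de)$; letting $\de\to 0$ kills $\theta(\de)$, and since $\ep>0$ was arbitrary we conclude $\lim_{n\to\infty}P(U_n\le x)=P(U_\infty\le x)$, which is \eqref{e.ulim}.

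The only delicate point — the ``main obstacle,'' though it is minor — is the bookkeeping of the iterated $\limsup$'s in $M$ and $n$: one must be careful that the $M\to\infty$ limit in \eqref{e.vnmconv} is applied only after the $n\to\infty$ limit, and that the perturbation estimate \eqref{e.limlim} is genuinely an $\limsup_M\limsup_n$ statement so that the two can be interleaved consistently. A clean way to organize this is to first send $n\to\infty$ with $M$ and $\de$ fixed, obtaining a bound on $\limsup_n P(U_n\le x)$ and $\liminf_n P(U_n\le x)$ that depends on $M$ and $\de$ but not on $n$; then send $M\to\infty$; then send $\de\to0$. Everything else is the routine $\ep$-$\de$ manipulation of distribution functions sketched above, together with the elementary observation that only countably many values of $\de$ can be ``bad'' (i.e.\ place mass on a shifted coordinate hyperplane through $x$), so a good $\de$ arbitrarily close to $0$ always exists.
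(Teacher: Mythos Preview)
Your proposal is correct and takes essentially the same approach as the paper: both derive the sandwich bound
\[
P(V_{n,M}\le x-\de\mathbf 1)-\sum_{i=1}^N P(|\wt V^i_{n,M}|\ge\de)\;\le\; P(U_n\le x)\;\le\; P(V_{n,M}\le x+\de\mathbf 1)+\sum_{i=1}^N P(|\wt V^i_{n,M}|\ge\de),
\]
take $\liminf/\limsup$ first in $n$ and then in $M$, and finally send the shift $\de\to 0$ using continuity of the limit CDF at $x$. The only cosmetic difference is that you keep two separate parameters $\ep$ and $\de$, whereas the paper identifies them and sends a single $\ep\to 0$ at the end.
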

\begin{proof}
 We first   have the following elementary relation for $\ep>0$ and $x\in\RR^{N}$:
\begin{eqnarray*}
P(V_{M,n} \leq x-\ep) - \sum_{i=1}^{N}P(|\wt{V}_{M,n}^{i}|\geq \ep)    \leq P(V_{M,n}+\wt{V}_{M,n}\leq x)
\\
 \leq P(V_{M,n} \leq x+\ep) +  \sum_{i=1}^{N}P(|\wt{V}_{M,n}^{i}|\geq \ep)  .     
\end{eqnarray*}
Sending $n\to\infty$ and then $M\to\infty$, and substituting the relation $V_{M,n}+\wt{V}_{M,n}=U_{n}$ gives:
\begin{eqnarray*}
P(U_{\infty} \leq x-\ep) - \sum_{i=1}^{N}\limsup_{M\to\infty}\limsup_{n\to\infty}P(|\wt{V}_{M,n}^{i}|\geq \ep)   \leq \liminf_{n\to\infty}P(U_{n}\leq x) , 
\\
\text{and}\quad
\limsup_{n\to\infty}P(U_{n}\leq x)  \leq  P(U_{\infty} 
\leq x+\ep) + \sum_{i=1}^{N}\limsup_{M\to\infty}\limsup_{n\to\infty}P(|\wt{V}_{M,n}^{i}|\geq \ep) .     
\end{eqnarray*}
Sending $\ep\to\infty$ and invoking relation \eqref{e.limlim} and  that $P(U_{\infty}=x)=0$ we obtain~\eqref{e.ulim}. 
\end{proof}

\section{Limit theorem for  compensated weighted   sums}\label{section.lim}
This section aims to prove our main limit theorem results. 
 In Section \ref{section.skorohod} we show the convergence of the Skorohod-type Riemann-Stieltjes sum; see definition  in  \eqref{e.zn}. Then  in Section \ref{section.lt} we establish     limit theorem    for the compensated weighted sum  defined \eqref{e.ce}. 
 
  \subsection{Limit theorem  for   Skorokhod-type  sums}\label{section.skorohod}
  Let $\cp: 0=t_{0}<\cdots<t_{n}=T$ be the uniform   partition of $[0,T]$. For $t\in(t_{k},t_{k+1}]$ we denote $\la(t)=t_{k+1}$. This  section is devoted   to proving   convergence of the following  Skorohod-type Riemann-Stieltjes sum:
  \begin{eqnarray}
Z^{(n)}_{st} :=Z^{(n), i}_{st} :=   n^{H+1/2}\sum_{s\leq t_{k}<t}\int_{t_{k}}^{t_{k+1}}
\delta^{\diamond} \big(
  x^{i-1 }_{\la(s)t_{k}}\mathbf{1}_{[t_{k},v]}   \big) dv   
  \label{e.zn}
\end{eqnarray}
for $(s,t)\in\cs_{2}([0,T]) $ and $i\in\NN$, where recall that   $x$ is the fBm with Hurst parameter $H<1/2$ and  $x^{i}_{st} $ stands for $ (\delta x_{st})^{i}/i!$. 
   In the following we denote the constant:   \begin{eqnarray}
   c_{H} = \sum_{k\in\ZZ}\mu(k) ,
   \qquad
\mu (k) = \int_{ {0}}^{ {1}} \int_{ { k}}^{ { k+1}}
  \langle \mathbf{1}_{[  k,v']}      ,  
  \mathbf{1}_{[ 0,v]}     
   \rangle_{\ch} dv' dv .
   \label{e.rho}
\end{eqnarray}

\begin{theorem}\label{prop.du}
Let $Z^{(n)}$ be defined in \eqref{e.zn}, and $Z$ be  the process: 
\begin{eqnarray}
Z_{st}:=Z_{st}^{i}:=  {c_{H}}^{1/2}T^{H+1/2} \int_{s}^{t} x^{i-1}_{su}dW_{u} 
\qquad (s,t)\in\cs_{2}([0,T]), 
\label{e.zist}
\end{eqnarray}
where $W$ is a standard Brownian motion independent of $x$   and  $c_{H}$ is the constant defined in \eqref{e.rho}. Then the finite dimensional distribution  of $Z^{(n)}$ converges $\cf^{x}$-stably (see e.g. \cite{JS}) to that of $Z$.  (For convenience, we simply write $Z^{(n)}\xrightarrow{stable\,\, f.d.d. } Z$ in the sequel.)  Precisely, 
  for any partition $\pi: 0=s_{0}<s_{1}<\cdots<s_{N}=T$ of $[0,T]$ and $N\in\NN$ we have  
     the following convergence in law:
\begin{eqnarray}\label{e.zpic}
(Z_{\pi}^{(n)}, x_{\pi}) \to (Z_{\pi}, x_{\pi}) 
\qquad \text{as $n\to\infty$,}
\end{eqnarray}
where 
\begin{eqnarray}
Z_{\pi} &=& (Z_{s_{j}s_{j+1}},   j=0,\dots,N-1  ),
\qquad
x_{\pi}  =  (x_{s_{j+1}},  j=0,\dots,N-1  ),
\notag
\\
Z^{(n)}_{\pi}  &=&  (Z^{(n)}_{s_{j}s_{j+1}},   j=0,\dots,N-1  ).
\label{e.znpi}
\end{eqnarray} 
\end{theorem}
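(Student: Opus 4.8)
The plan is to realize $Z^{(n),i}$ as a single Skorohod integral and then run a Malliavin‑calculus smart‑path argument in the spirit of \cite{nourdin2016quantitative,nourdin2012normal}. Since $\delta^\diamond$ is linear and commutes with the deterministic Lebesgue integral in $v$, I would first rewrite
\begin{eqnarray*}
Z^{(n),i}_{st}=\delta^\diamond\big(\Psi^{(n),i}_{st}\big),\qquad \Psi^{(n),i}_{st}:=n^{H+1/2}\sum_{s\le t_k<t} x^{i-1}_{\la(s)t_k}\,g_{t_k},\qquad g_{t_k}:=\int_{t_k}^{t_{k+1}}\mathbf 1_{[t_k,v]}\,dv\in\ch .
\end{eqnarray*}
For $i=1$ the integrand $\Psi^{(n),1}_{st}$ is deterministic, so $Z^{(n),1}$ is a Gaussian field and the theorem follows from the convergence of covariances together with the asymptotic independence $\mE[Z^{(n),1}_{st}x_u]\to0$; for $i\ge2$ the integrand is genuinely random and the argument below is needed in full. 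The target process $Z$ in \eqref{e.zist} is conditionally Gaussian given $x$ with block‑diagonal conditional covariance $\mE[Z_{s_js_{j+1}}Z_{s_{j'}s_{j'+1}}\mid x]=\delta_{jj'}\,c_{H}\,T^{2H+1}\int_{s_j}^{s_{j+1}}(x^{i-1}_{s_ju})^2\,du$, so the whole proof is organized around reproducing this conditional covariance in the limit.

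The core of the argument is the $L^2$‑convergence of the carr\'e du champ: writing $\Psi^{(n)}_j:=\Psi^{(n),i}_{s_js_{j+1}}$, I would prove
\begin{eqnarray*}
\langle\Psi^{(n)}_j,\Psi^{(n)}_{j'}\rangle_\ch \;\longrightarrow\; \delta_{jj'}\,c_{H}\,T^{2H+1}\int_{s_j}^{s_{j+1}}\big(x^{i-1}_{s_ju}\big)^2\,du \qquad\text{in }L^2 .
\end{eqnarray*}
The algebraic input is that, by self‑similarity and stationarity of the increments of $x$, $\langle g_{t_k},g_{t_{k'}}\rangle_\ch=(T/n)^{2H+2}\mu(k-k')$ with $\mu$ as in \eqref{e.rho}; a Taylor expansion of the fBm covariance at well‑separated arguments shows $\mu(m)=O(|m|^{2H-2})$, so that $\sum_{m\in\ZZ}\mu(m)=c_{H}$ is finite \emph{precisely because} $H<1/2$. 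Substituting and setting $k'=k+m$, the expectation of $\langle\Psi^{(n)}_j,\Psi^{(n)}_{j'}\rangle_\ch$ becomes a Riemann‑type sum which converges to the claimed limit thanks to the local continuity in $k$ of the weights $x^{i-1}_{s_jt_k}$ and the convergence $(T/n)\sum_k\to\int$; the off‑diagonal terms $j\ne j'$ vanish because the blocks $\ll s_j,s_{j+1}\rr$, $\ll s_{j'},s_{j'+1}\rr$ are separated, so only the rapidly decaying tail of $\mu$ contributes. Convergence of the variance to $0$ is a fourth‑moment computation: one expands into products of fBm covariances and bounds every resulting sum using Lemma \ref{lem.abcd} together with the $p$‑series estimates of Lemma \ref{lem.rho}, with Lemma \ref{lem.stnbd} absorbing the logarithmic losses.

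To conclude I would fix $\xi,\eta\in\RR^N$, set $F_n:=\sum_j\xi_jZ^{(n)}_{s_js_{j+1}}=\delta^\diamond(\Psi^{(n)})$ with $\Psi^{(n)}:=\sum_j\xi_j\Psi^{(n)}_j$, take $\Phi:=e^{\mathbf{i}\langle\eta,x_\pi\rangle}$, and study $\psi_n(t):=\mE[\Phi\,e^{\mathbf{i} tF_n}]$. The duality relation \eqref{e.ibp} and the commutation rule $D\delta^\diamond=\id+\delta^\diamond D$ give
\begin{eqnarray*}
\psi_n'(t)=-t\,\mE\big[\Phi\,e^{\mathbf{i} tF_n}\,|\Psi^{(n)}|_\ch^2\big]+\mathbf{i}\,\mE\big[e^{\mathbf{i} tF_n}\langle\Psi^{(n)},D\Phi\rangle_\ch\big]-t\,\mE\big[\Phi\,e^{\mathbf{i} tF_n}\langle\Psi^{(n)},\delta^\diamond(D\Psi^{(n)})\rangle_\ch\big] .
\end{eqnarray*}
The carr\'e‑du‑champ estimate gives $|\Psi^{(n)}|_\ch^2\to V_\xi:=\sum_j\xi_j^2\,c_{H}\,T^{2H+1}\int_{s_j}^{s_{j+1}}(x^{i-1}_{s_ju})^2\,du$ in $L^2$; the second term is a combination of quantities $\langle\Psi^{(n)},\mathbf 1_{[0,a]}\rangle_\ch$, each of which is $O(n^{H-1/2})$ in $L^2$ by Lemma \ref{lem.abcd}; and the last term (present only for $i\ge2$) is shown to be $o(1)$ by moment estimates of the same kind. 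Hence $\psi_n'(t)=-t\,\mE[\Phi V_\xi e^{\mathbf{i} tF_n}]+o(1)$ uniformly for $t$ in compact sets, and feeding this back in — either by iterating the identity with $\Phi$ replaced by the admissible $\cf^x$‑measurable test variables $\Phi V_\xi^m$ (together with a truncation of $V_\xi$), or by invoking the corresponding stable‑CLT criterion for Skorohod integrals — yields $\psi_n(t)\to\mE[\Phi\,e^{-t^2V_\xi/2}]$. At $t=1$ this is exactly $\mE[e^{\mathbf{i}\langle\eta,x_\pi\rangle}e^{\mathbf{i}\langle\xi,Z_\pi\rangle}]$; letting $\xi,\eta$ vary gives \eqref{e.zpic}, and the $\cf^x$‑stability follows since the exponentials of $x_\pi$ form a determining class (see \cite{JS}).

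The step I expect to be the main obstacle is the fourth‑moment estimate upgrading the carr\'e du champ to $L^2$‑convergence, and, for $i\ge2$, the companion bound on the correction term $\langle\Psi^{(n)},\delta^\diamond(D\Psi^{(n)})\rangle_\ch$. Here the naive estimates produce powers of $n$ that fail to decay when $H<1/2$, so one must genuinely exploit the cancellations among the fBm increments — the same mechanism that turns the $O(|m|^{2H-1})$ covariance bound into the summable $O(|m|^{2H-2})$ decay of $\mu$ — and then carry out a fairly involved bookkeeping of the resulting multiple sums via Lemmas \ref{lem.abcd}, \ref{lem.rho} and \ref{lem.stnbd}.
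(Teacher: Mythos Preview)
Your outline is broadly in the same Malliavin/smart-path spirit as the paper, and your identification of the key quantities (the kernel identity $\langle g_{t_k},g_{t_{k'}}\rangle_\ch=(T/n)^{2H+2}\mu(k-k')$, the carr\'e du champ $|\Psi^{(n)}|_\ch^2\to V_\xi$, and the vanishing cross-term $\langle\Psi^{(n)},\mathbf 1_{[0,a]}\rangle_\ch$) matches the paper's Steps 5--6 and the estimate of $\ci_{14}$. The paper, however, organizes the argument slightly differently: instead of differentiating $\psi_n(t)=\mE[\Phi e^{\mathbf{i} tF_n}]$ and solving a limiting ODE, it writes the difference of characteristic functions directly as $\int_0^1\mE[\phi'(\theta)]\,d\theta$ with the interpolation $\phi(\theta)=\exp\big(\mathbf{i}\theta z_{(1)}Z^{(n)}_\pi-\tfrac12(1-\theta^2)z_{(1)}^2F_\pi+\mathbf{i} z_{(2)}x_\pi\big)$. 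This avoids the extra ``feeding back'' step you describe and yields the convergence $\cz^{(n)}\to0$ in one pass.

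There is one genuine gap. Your third term --- the correction $\mE\big[\Phi e^{\mathbf{i} tF_n}\langle\Psi^{(n)},\delta^\diamond(D\Psi^{(n)})\rangle_\ch\big]$ --- is \emph{not} $o(1)$ by direct moment estimates: the naive bound via Lemmas \ref{lem.abcd}--\ref{lem.rho} gives only $O(n^{H})$, exactly as you anticipate in your last paragraph. The paper's analogous term is $\ci_{12}$, and their estimate \eqref{e.ibd} confirms $|\ci_{12}|\lesssim n^H$. The device that closes this is not a sharper covariance bound but an \emph{iterated integration by parts}: one applies \eqref{e.ibp} to $\ci_{12}$ again, producing a new Skorohod integral of one order lower (in the power of $x$) and a new set of terms $\ci_{2e}$, $e=1,\dots,5$; then one repeats on $\ci_{22}$, and so on. Each iteration gains a factor $n^{H-1/2}$ (see \eqref{e.iaebd}), so after $M>\frac{H}{1/2-H}+1$ steps the residual $\ci_{M2}$ is $o(1)$. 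In your language, you must replace the bald claim ``is shown to be $o(1)$'' by a recursion that rewrites the correction as a sum of terms involving $\delta^\diamond(D^a\Psi^{(n)})$ with $a$ increasing, each paired with $a$ inner products $\langle g_{t_k},\mathbf 1_{[\la(s_{j'}),t_{k'}]}\rangle_\ch$ that each contribute $n^{H-1/2}$. Without this recursion the proof does not close for $i\ge2$.
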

\begin{remark}
By  following the same argument below it can be shown that the convergence \eqref{e.zpic} still holds (possibly with a different constant $c_{H}$ in the limit of \eqref{e.zpic}) when the uniform parittion $\cp$ is replaced  by a non uniform one,  as long as the mesh size of the partition tends to zero. 
\end{remark}

\begin{proof}[Proof of Theorem \ref{prop.du}] 
Take $z=(z_{0},\dots,z_{2N-1})\in\RR^{2N}$ and denote  \begin{eqnarray}
z_{(1)}  = (z_{0},\dots,z_{N-1})
\qquad
\text{and}
\qquad
z_{(2)} = (z_{N},\dots,z_{2N-1}) .
\notag
\end{eqnarray}
 Let $\cz^{(n)}$ be    difference between   characteristic functions of $(Z^{(n)}_{\pi}, x_{\pi})$ and $(Z_{\pi}, x_{\pi})$. Namely:
\begin{eqnarray}\label{e.cz}
\cz^{(n)} =\mE\lc \exp \lp \mathbf{i}z_{(1)}Z^{(n)}_{\pi}+\mathbf{i}z_{(2)}x_{\pi}\rp \rc - \mE\lc \exp \lp \mathbf{i}z_{(1)}Z_{\pi}+\mathbf{i}z_{(2)}x_{\pi}\rp \rc ,  
\end{eqnarray}
where $z_{(1)}Z^{(n)}_{\pi} $, $ z_{(2)}x_{\pi}$ and $z_{(1)}Z_{\pi}$ should be interpreted as    dot products of vectors. 
In the following we show that $\cz^{(n)}\to0$ as $n\to\infty$, which then gives the convergence \eqref{e.zpic}.  Our proof is divided into several steps. 

\noindent\emph{Step 1: A decomposition for $\cz^{(n)} $.} 
Since $Z_{\pi} $ conditioned on $x$ is Gaussian  its characteristic function can          be written as:   
\begin{eqnarray}
\mE\lc \exp \lp \mathbf{i}z_{(1)}Z_{\pi}+\mathbf{i}z_{(2)}x_{\pi}\rp \rc &=& \mE^{x}\mE^{W}\lc \exp \lp \mathbf{i}z_{(1)} Z_{\pi}+\mathbf{i}z_{(2)}x_{\pi}\rp \rc 
\notag
\\
&=& \mE^{x} \big[ \exp \big( -\frac12z_{(1)}^{2} F_{\pi} +\mathbf{i}z_{(2)} x_{\pi} \big) \big], 
\label{e.chzn}
\end{eqnarray}
where    
\begin{eqnarray}\label{e.fpi}
F_{st} &=& {c_{H}}{T^{2H+1}}\int_{s}^{t} (x^{i-1}_{su})^{2}d{u} 
\qquad \text{and} \qquad F_{\pi} = (F_{s_{j}s_{j+1}},\, j=0,\dots,N-1) , 
\end{eqnarray}
and we denote    $z_{(1)}^{2} = (z_{0}^{2},\dots,z_{N-1}^{2})$.

We consider the so-called smart-path interpolation function (see \cite{nourdin2016quantitative, nourdin2012normal}):   
\begin{eqnarray}\label{e.interpolation}
\phi(\theta) =  \exp\big( \mathbf{i}\theta z_{(1)} Z_{\pi}^{(n)}-\frac12  (1-\theta^{2})z_{(1)}^{2}F_{\pi}+  \mathbf{i}z_{(2)}x_{\pi}\big)
\qquad \theta\in[0,1]. 
\end{eqnarray}
By taking $\theta=0$ and $\theta=1$ in \eqref{e.interpolation} and invoking relations \eqref{e.chzn} it is easily seen that $\phi(1)$ and $\phi(0)$ are identified with the two terms in the right-hand side of \eqref{e.cz}, respectively. Namely, we have $\cz^{(n)} = \mE[\phi(1)]-\mE[[\phi(0) ]$. 
By applying   Newton-Leibniz formula we thus have:  
\begin{eqnarray}\label{e.czn}
\cz^{(n)}  
=\int_{0}^{1}\mE[\phi'(\theta) ]d\theta
. 
\end{eqnarray}

Differentiating \eqref{e.interpolation}  we obtain:   
\begin{eqnarray}\label{e.phip}
\phi'(\theta) = \phi(\theta) (\mathbf{i} z_{(1)} Z_{\pi}^{(n)}+\theta z_{(1)}^{2}F_{\pi}) . 
\end{eqnarray}
Substituting  equation \eqref{e.phip}   into \eqref{e.czn} gives:  
\begin{eqnarray}
\cz^{(n)}  &=&\cz^{(n)}_{1}  +\cz^{(n)}_{2}\,, 
\label{e.czn2}
\end{eqnarray}
where
\begin{eqnarray}
 \cz^{(n)}_{1}= \mathbf{i} \int_{0}^{1}\mE\phi(\theta) z_{(1)}Z^{(n)}_{\pi}  d\theta
 \qquad\text{and}\qquad
  \cz^{(n)}_{2}= \int_{0}^{1}\mE\phi(\theta)  \theta z_{(1)}^{2}F_{\pi} d\theta . 
  \label{e.i1i2}
\end{eqnarray}

\noindent\emph{Step 2: A decomposition of $\cz^{(n)}_{1} $.} Recall that $Z^{(n)}$ is defined in \eqref{e.zn}. Substituting \eqref{e.zn} into  the first equation in  \eqref{e.i1i2}  and then applying   integration by parts formula \eqref{e.ibp} we obtain:
\begin{eqnarray}\label{e.ij1new}
\cz^{(n)}_{1}  =\sum_{j=0}^{N-1} \mathbf{i}n^{H+1/2}z_{j} \int_{0}^{1}\sum_{s_{j}\leq t_{k}<s_{j+1}}\int_{t_{k}}^{t_{k+1}}\mE\langle D\phi(\theta) ,  
  x^{i-1 }_{\la(s_{j})t_{k}}\mathbf{1}_{[t_{k},v]}     
   \rangle_{\ch} dv  d\theta . 
\end{eqnarray}
Let us work out the derivative $D\phi(\theta)$  in \eqref{e.ij1new}.   Differentiating \eqref{e.interpolation} gives:  
\begin{eqnarray}
D\phi(\theta) &=&  \phi(\theta) \big( \mathbf{i}\theta z_{(1)}  DZ^{(n)}_{\pi} -\frac12  (1-\theta^{2})z_{(1)}^{2}D F_{\pi} + \mathbf{i}z_{(2)}Dx_{\pi} \big)
\notag
 \\
&=& \sum_{j=0}^{N-1} (\cc_{0}^{j}+\cc_{3}^{j}+\cc_{4}^{j}),
\label{e.dphi2}
\end{eqnarray}
where
\begin{eqnarray}
\cc_{0}^{j}&=&\phi(\theta)   \mathbf{i}\theta z_{j}  DZ^{(n)}_{s_{j}s_{j+1}}
    \label{e.c0}
\\
\cc_{3}^{j} &= &-\frac12  (1-\theta^{2})\phi(\theta) z_{j}^{2} 
  D F_{s_{j}s_{j+1}}  
\notag  \\
  &=& - { c_{H}}{T^{2H+1}} (1-\theta^{2})\phi(\theta)  z_{j}^{2} 
   \int_{s_{j}}^{s_{j+1}}  2x^{i-1}_{s_{j}u}x^{i-2}_{s_{j}u} \mathbf{1}_{[s_{j},u]} du
     \label{e.c3}
  \\
   \cc_{4}^{j}&=&\mathbf{i}\phi(\theta) z_{N+j}Dx_{s_{j+1}}=\mathbf{i}\phi(\theta) z_{N+j}\mathbf{1}_{[0,s_{j+1}]}.
    \label{e.c4}
\end{eqnarray}

By differentiating $Z^{(n)}_{st}$ in  \eqref{e.zn} and then applying the relations $D(\delta^{\diamond} (u)) = u+\delta^{\diamond} (Du)$ (see e.g. \cite{N06}) and $Dx^{i-1 }_{\la(s)t_{k}} = x^{i-2 }_{\la(s)t_{k}}\mathbf{1}_{[\la(s),t_{k}]}$,  we obtain: 
\begin{eqnarray}
DZ^{(n)}_{st}&=&n^{H+1/2}\sum_{s\leq t_{k}<t}\int_{t_{k}}^{t_{k+1}}  
  x^{i-1 }_{\la(s)t_{k}}\mathbf{1}_{[t_{k},v]}     dv
 \notag
  \\
  &&+n^{H+1/2}\sum_{s\leq t_{k}<t}\int_{t_{k}}^{t_{k+1}}  
\delta^{\diamond}(  x^{i-2 }_{\la(s)t_{k}}\mathbf{1}_{[t_{k},v]})\mathbf{1}_{[\la(s),t_{k}]}     dv ,    
\label{e.dzn}
\end{eqnarray}
where we  use the convention that $x^{i}\equiv 0$ when  $i=-1,-2,\dots$. 
Substituting \eqref{e.dzn} into \eqref{e.c0} with $s=s_{j}$ and $t=s_{j+1}$ gives the following relation for $D\phi(\theta) $: 
\begin{eqnarray}
\cc_{0}^{j}&=&\cc_{1}^{j}+\cc_{2}^{j},
\label{e.c01}
\end{eqnarray}
where
\begin{eqnarray}
\cc_{1}^{j} &=&\mathbf{i} n^{H+1/2} \theta z_{j}  \phi(\theta) \sum_{s_{j}\leq t_{k}<s_{j+1}}\int_{t_{k}}^{t_{k+1}}
  x^{i-1 }_{\la(s_{j}) t_{k}}\mathbf{1}_{[t_{k},v]}     dv 
  \label{e.c1}
  \\
  \cc_{2}^{j} &= & \mathbf{i} n^{H+1/2} \theta z_{j}\phi(\theta)  
  \sum_{s_{j}\leq t_{k}<s_{j+1}}\int_{t_{k}}^{t_{k+1}}
\delta^{\diamond} \lp
  x^{i -2}_{\la(s_{j})t_{k}}\mathbf{1}_{[t_{k},v]}  \rp  \mathbf{1}_{[ \la(s_{j}),t_{k}]} dv.
  \label{e.c2}
\end{eqnarray}

Substituting  \eqref{e.c01} into \eqref{e.dphi2} we obtain: 
\begin{eqnarray}
D\phi(\theta) &=& \sum_{j=0}^{N-1}(\cc_{1}^{j}+\cc_{2}^{j}+\cc_{3}^{j}+\cc_{4}^{j}). 
\label{e.dphi}
\end{eqnarray}
 Substituting \eqref{e.dphi} into \eqref{e.ij1new} we obtain the following decomposition of $\cz^{(n)}_{1} $:  
\begin{eqnarray}
\cz^{(n)}_{1}  = 
 \ci_{11}+ \ci_{12}+ \ci_{13}+ \ci_{14} \,,
\label{e.z1d}
\end{eqnarray}
where
\begin{eqnarray}
\ci_{1e}   =\sum_{j_{1},j_{2}=0}^{N-1} \mathbf{i}n^{H+1/2}z_{j_{1}}\int_{0}^{1}\sum_{s_{j_{1}}\leq t_{k_{1}}<s_{j_{1}+1}}\int_{t_{k_{1}}}^{t_{k_{1}+1}}\mE\langle \cc_{e}^{j_{2}} ,  
  x^{i-1 }_{\la(s_{j_{1}})t_{k_{1}}}\mathbf{1}_{[t_{k_{1}},v_{1}]}     
   \rangle_{\ch} 
 \notag  \\
 \cdot  dv_{1}  d\theta 
. 
   \label{e.ic}
\end{eqnarray}

In fact, we can further expand  relation \eqref{e.z1d} into the following:
 \begin{eqnarray}
\cz^{(n)}_{1}  = 
\sum_{e=1,3,4}\ci_{1e}  + \sum_{e=1,3,4,5}\ci_{2e}  +\cdots+ \sum_{e=1,3,4,5}\ci_{Me}   +\ci_{M2}, \quad M\in\NN,
\label{e.czn1}
\end{eqnarray}
where
  \begin{eqnarray}
\ci_{ae} &=&  \sum_{j_{1},\cdots,j_{a}=0}^{N-1}  \sum_{\substack{s_{j_{1}}\leq t_{k_{1}} <s_{j_{1}+1}\\\vdots\\s_{j_{a}}\leq  t_{k_{a}}<s_{j_{a}+1}}} 
\int_{0}^{1}\Big(\prod_{c=1}^{a-1}g_{c}\Big)
 \vp_{ae}^{j_{1} \cdots  j_{a}} d\theta\,, 
 \qquad e=1,2,3,4,5
\label{e.iaej}
     \\
\vp_{ae}^{j_{1} \cdots  j_{a}}&=&\sum_{j_{a+1}=0}^{N-1} \vp_{ae}^{j_{1} \cdots  j_{a+1}}, \quad e=1,2,3,4
\label{e.fae}
\\
\vp_{ae}^{j_{1} \cdots  j_{a+1}}&=&  \mathbf{i}   z_{j_{a}}n^{H+1/2} \int_{t_{k_{a}}}^{t_{k_{a}+1}} \mE  \lc
 \langle \cc_{e}^{j_{a+1}},  
  \mathbf{1}_{[t_{k_{a}},v_{a}]} \rangle_{\ch}
 \cg_{a}\rc
      dv_{a},  
\label{e.faej}
\\
      \vp_{a5}^{j_{1} \cdots  j_{a}}&=&  \mathbf{i}   z_{j_{a}}n^{H+1/2} \int_{t_{k_{a}}}^{t_{k_{a}+1}} \mE    \Big[
 \langle D {\cg}_{a-1}  ,  
  \mathbf{1}_{[t_{k_{a}},v_{a}]} \rangle_{\ch}x^{i-2 }_{\la(s_{j_{a}})t_{k_{a}}}\phi(\theta)
 \Big]
      dv_{a}
       \label{e.fa5} \\ 
     g_{c}&=& \mathbf{i} \theta z_{j_{c}}n^{H+1/2}
\int_{t_{k_{c}}}^{t_{k_{c}+1}}
 \langle  
 \mathbf{1}_{[ \la(s_{j_{c+1}}),t_{k_{c+1}}]}   ,  
\mathbf{1}_{[t_{k_{c}},v_{c}]}     
   \rangle_{\ch}dv_{c} 
   \label{e.gc}
   \\
     \cg_{a} &=&   \Big(\prod_{c=2}^{a}x^{i-2 }_{\la(s_{j_{c}})t_{k_{c}}}
    \Big) 
     x^{i-1 }_{\la(s_{j_{1}})t_{k_{1}}} , 
        \label{e.Ga} 
\end{eqnarray}
and  we have used the convention that $\prod_{c=1}^{0}g_{c} =\prod_{c=2}^{1}x^{i-2 }_{\la(s_{j_{c}})t_{k_{c}}} =1$ in \eqref{e.iaej} and \eqref{e.Ga}. 
Indeed,   similar to   computations in \eqref{e.ij1new}, by applying integration by parts   to $\ci_{12}$ in \eqref{e.ic} and invoking relation \eqref{e.dphi} we can show that $\ci_{12} =\sum_{e=1}^{5} \ci_{2e} $, where $\ci_{2e}$ are defined in \eqref{e.iaej}. In general, applying integration by parts to $\ci_{a2}$ and then relation \eqref{e.dphi}  gives:  
\begin{eqnarray}
\ci_{a2} &=&\sum_{e=1}^{5} \ci_{a+1,e} 
\qquad
\text{for all $a\in\NN$.}
\label{e.ia2new}
\end{eqnarray}
 Now,    substituting \eqref{e.ia2new} into \eqref{e.z1d} with $a=1,2,\dots$ we obtain  relation \eqref{e.czn1}. For sake of conciseness we omit    details of   proof for \eqref{e.czn1} and leave it to the patient reader.   

For   later use, let us  note that with the   notations in \eqref{e.iaej} the equation \eqref{e.ic} becomes 
\begin{eqnarray}
\ci_{11} = \sum_{j_{1},j_{2}=0}^{N-1} \ci_{11}^{j_{1}j_{2}}   
\qquad\text{with}\quad
\ci_{11}^{j_{1}j_{2}} =  \sum_{ s_{j_{1}}\leq t_{k_{1}} <s_{j_{1}+1} } 
\int_{0}^{1} 
 \vp_{11}^{j_{1}  j_{2}} d\theta. 
\label{e.i11ij}
\end{eqnarray}

\noindent\emph{Step 3: Estimate  for $\vp_{ae}$.}
Recall that $\vp_{ae}$,   $e=1,2,3,4$ are defined in \eqref{e.faej}. 
By applying Cauchy-Schwartz inequality in \eqref{e.faej}  we have the estimate:
\begin{eqnarray}
| \vp_{ae}^{j_{1} \cdots  j_{a+1}}|  \lesssim   \bar{\vp}_{ae}^{j_{a}  j_{a+1}}:=n^{H-1/2} \sup_{v_{a}\in[t_{k_{a}}, t_{k_{a}+1}]} | 
 \langle \cc_{e}^{j_{a+1}},  
  \mathbf{1}_{[t_{k_{a}},v_{a}]} \rangle_{\ch}
  |_{L_{p}}
 . 
 \label{e.faebd2}
\end{eqnarray}

In the following we derive the estimate for  $ \bar{\vp}_{ae}$. 
  For sake of simplicity we will write $k, k', j, j' $ instead of $ k_{a}, k_{a+1}, j_{a}, j_{a+1} $. Without loss of  generality we     take $T=1$, and thus $nt_{k}=k$.   

{\bf Estimate for $\mathbf e=1.$}   
By the definition of $\cc_{1}^{j}$ in \eqref{e.c1} it is clear that 
\begin{eqnarray}
\qquad\quad
\langle \cc_{1}^{j'},  
  \mathbf{1}_{[t_{k},v]} \rangle_{\ch} = 
  \mathbf{i} n^{H+1/2} \theta z_{j'}  \phi(\theta)\!\! \sum_{s_{j'}\leq t_{k'}<s_{j'+1}}\int_{t_{k'}}^{t_{k'+1}}
  x^{i-1 }_{\la(s_{j'})t_{k'}}
  \langle \mathbf{1}_{[t_{k'},v']} ,  
  \mathbf{1}_{[t_{k},v]} \rangle_{\ch}
      dv' . 
      \label{e.c1bd1}
\end{eqnarray}
By self-similarity of the fBm we have the relation: 
\begin{eqnarray}
\langle \mathbf{1}_{[t_{k'},v']} ,  
  \mathbf{1}_{[t_{k},v]} \rangle_{\ch} = n^{-2H} \langle \mathbf{1}_{[ {k'},nv' ]} ,  
  \mathbf{1}_{[ {k},nv ]} \rangle_{\ch}.
\label{e.ssim}
\end{eqnarray}
Note that in \eqref{e.c1bd1} we have     $s_{j'}\leq t_{k'}\leq v'\leq t_{k'+1}<s_{j'+1}$ and $s_{j}\leq t_{k}\leq v\leq t_{k+1}<s_{j+1}$. 
This implies that $ nv \in[k,k+1]$ and  $ nv \in[k',k'+1]$. So we can bound \eqref{e.ssim} by:
\begin{eqnarray*}
|\langle \mathbf{1}_{[t_{k'},v']} ,  
  \mathbf{1}_{[t_{k},v]} \rangle_{\ch} |&\lesssim& n^{-2H}\rho(k-k')^{2H-2}. 
\end{eqnarray*}
 It follows that the right-hand side of  \eqref{e.c1bd1} is bounded by
\begin{eqnarray}
  \big|
\langle \cc_{1}^{j'},  
  \mathbf{1}_{[t_{k},v]} \rangle_{\ch}
  \big|_{L_{p}}
   \lesssim n^{H+1/2 } \sum_{s_{j'}\leq t_{k'}<s_{j'+1}}\int_{t_{k'}}^{t_{k'+1}}
  n^{-2H} \rho(k-k')^{2H-2}
  dv'
  \notag
    \\
  \lesssim n^{-H-1/2}\sum_{s_{j'}\leq t_{k'}<s_{j'+1}} \rho(k-k')^{2H-2}. 
  \label{e.c1bd2}
\end{eqnarray}
Applying the relations \eqref{e.rhobd}-\eqref{e.rhobd3} in  Lemma \ref{lem.rho} to the right-hand side of \eqref{e.c1bd2} with $\al=2H-2<-1$ we    get 
\begin{eqnarray}
\qquad\quad |
\langle \cc_{1}^{j'},  
  \mathbf{1}_{[t_{k},v]} \rangle_{\ch}
   |_{L_{p}}
   \lesssim 
   \begin{cases}
   n^{-H-1/2} &j=j' \\
  n^{-H-1/2} (\rho(k-ns_{j'})^{2H-1}+\rho(k-ns_{j'+1})^{2H-1})&j\neq j' . 
  \end{cases}
  \label{e.c1bd3}
\end{eqnarray} 
Recall that $\bar{\vp}_{a1}$ is defined in \eqref{e.faebd2}. Summing up \eqref{e.c1bd3} in $t_{k}$ and then 
invoking   Lemma \ref{lem.rho}~(i)  with $\al =2H-1$, and taking into account the relation 
 \eqref{e.faebd2} and $j=j_{a}$, $j'=j_{a+1}$, $k=k_{a}$ and $k'=k_{a+1}$,       we obtain: 
\begin{eqnarray}\label{e.fa1bd}
\sum_{s_{j_{a}}\leq  t_{k_{a}}<s_{j_{a}+1}}  \bar{\vp}_{a1}^{j_{a} j_{a+1}} &\lesssim&
\begin{cases}
1&j_{a}= j_{a+1}.
\\
n^{2H-1}&j_{a}\neq j_{a+1}. 
\end{cases}
\end{eqnarray}

{\bf Estimate for $\mathbf e=2.$} 
The estimate of $\bar{\vp}_{a2}$ can be derived in the similar way: 
By the definition of $\cc_{2}^{j}$   in  \eqref{e.c2} we have 
\begin{eqnarray}
\langle \cc_{2}^{j'},  
  \mathbf{1}_{[t_{k},v]} \rangle_{\ch}  = 
   \mathbf{i} n^{H+1/2} \theta z_{j'}\phi(\theta)  
  \sum_{s_{j'}\leq t_{k'}<s_{j'+1}}\int_{t_{k'}}^{t_{k'+1}}
\delta^{\diamond} \big(
  x^{i-2 }_{\la(s_{j'})t_{k'}}\mathbf{1}_{[t_{k'},v']}  \big)  
  \notag\\
\cdot  \langle \mathbf{1}_{[ \la(s_{j'}),t_{k'}]} ,  
  \mathbf{1}_{[t_{k},v]} \rangle_{\ch}
  dv'.
  \label{e.c2bd2}
\end{eqnarray}

Applying the identity $\delta^{\diamond}(Fu)=F\delta^{\diamond} (u)+\langle DF, u\rangle_{\ch}$  it is easy to see that we have: 
\begin{eqnarray}
\big|\delta^{\diamond} \big(
  x^{i-2 }_{\la(s_{j'})t_{k'}}\mathbf{1}_{[t_{k'},v']}  \big) \big|_{L_{p}}
    &\lesssim& \big| x^{i-2 }_{\la(s_{j'})t_{k'}}\delta^{\diamond} \big(
 \mathbf{1}_{[t_{k'},v']}  \big) \big|_{L_{p}}
 +
 \big| \langle D x^{i-2 }_{\la(s_{j'})t_{k'}},  \mathbf{1}_{[t_{k'},v']}  \rangle_{\ch} \big|_{L_{p}}
 \notag
\\
 &\lesssim&  n^{-H}. 
  \label{e.c2bd1}
\end{eqnarray}
On the other hand, applying relation \eqref{e.abbd2} 
 with $(s,t,u,v) = ( \la(s_{j'}),t_{k'} , t_{k},v )$ we obtain: 
\begin{eqnarray}
 |\langle \mathbf{1}_{[ \la(s_{j'}),t_{k'}]} ,  
  \mathbf{1}_{[t_{k},v]} \rangle_{\ch}
|&\lesssim& n^{-2H}  ( \rho(k-k')^{2H-1}+\rho(k-ns_{j'} )^{2H-1}) . 
\label{e.c2bd4}
\end{eqnarray}
 
Substituting the estimates \eqref{e.c2bd1} and \eqref{e.c2bd4} into \eqref{e.c2bd2}   and then applying   Lemma \ref{lem.rho} (i) with $\al=2H-1$   we get:   
\begin{eqnarray}
|\langle \cc_{2}^{j'},  
  \mathbf{1}_{[t_{k},v]} \rangle_{\ch} |_{L_{p}} &\lesssim& n^{H+1/2} n^{-H-1} n^{-2H}
  \sum_{s_{j'}\leq t_{k'}<s_{j'+1}} 
  ( \rho(k-k')^{2H-1}+\rho(k-ns_{j'})^{2H-1})
  \notag\\
  &\lesssim&     n^{-1/2-2H}
   ( n^{2H}+n\rho(k-ns_{j'})^{2H-1}).
   \label{e.c2bd}
\end{eqnarray}
 Now, as in \eqref{e.fa1bd}    reporting the estimate \eqref{e.c2bd} to \eqref{e.faebd2} and applying Lemma \ref{lem.rho} (i) gives:
  \begin{eqnarray}
 \sum_{s_{j_{a}}\leq  t_{k_{a}}<s_{j_{a}+1}} \bar{\vp}_{a2}^{j_{a}  j_{a+1}} &\lesssim& n^{H}. 
\label{e.fa2bd}
\end{eqnarray}

{\bf Estimate for $\mathbf e=3.$} 
According to the defininition of $C^{j}_{3}$ in  \eqref{e.c3} we have: 
\begin{eqnarray*}
\langle \cc_{3}^{j'},  
  \mathbf{1}_{[t_{k},v]} \rangle_{\ch} = -   
    {c_{H}}{T^{2H+1}} (1-\theta^{2})z_{j'}^{2}  \int_{s_{j'}}^{s_{j'+1}} 
 \phi(\theta) 2x^{i-1}_{s_{j'}u}x^{i-2}_{s_{j'}u}   \langle  \mathbf{1}_{[s_{j'},u]}  ,  
\mathbf{1}_{[t_{k},v]}     
   \rangle_{\ch} du  .
\end{eqnarray*}
Similar to the estimate in \eqref{e.c2bd}, by applying Lemma \ref{lem.abcd}     we obtain: 
\begin{eqnarray}
\big|\langle \cc_{3}^{j'},  
  \mathbf{1}_{[t_{k},v]} \rangle_{\ch}\big|_{L_{p}}  
  &\lesssim &
  \int_{s_{j'}}^{s_{j'+1}}   |\langle  \mathbf{1}_{[s_{j'},u]}  ,  
\mathbf{1}_{[t_{k},v]}     
   \rangle_{\ch}| du
   \notag
   \\
    &\lesssim &
      \int_{s_{j'}}^{s_{j'+1}}  
 n^{-2H}[\rho( k-ns_{j'})^{2H-1}+\rho(k-nu)^{2H-1}]  du.  
 \label{e.c3bd}
\end{eqnarray}
  Substituting \eqref{e.c3bd} into \eqref{e.faebd2} and then applying Lemma \ref{lem.rho} (i) with $\al=2H-1$ we have:
  \begin{eqnarray}
 \sum_{s_{j_{a}}\leq  t_{k_{a}}<s_{j_{a}+1}} \bar{\vp}^{j_{a}  j_{a+1}}_{a3} &\lesssim&  n^{H-1/2}. 
\label{e.fa3bd}
\end{eqnarray}

{\bf Estimate for $\mathbf e=4.$} 
By applying  \eqref{e.c4} and then  Lemma \ref{lem.abcd} we have
\begin{eqnarray}
\qquad\big|
\langle \cc_{4}^{j'},  
  \mathbf{1}_{[t_{k},v]} \rangle_{\ch}\big|  \lesssim  
\big|\langle
\mathbf{1}_{[0,s_{j'+1}]},  \mathbf{1}_{[t_{k},v]} 
\rangle_{\ch}\big| 
 \lesssim  
n^{-2H } [\rho(k)^{2H-1}+\rho(k-ns_{j'+1})^{2H-1}]. 
\label{e.c4bd}
\end{eqnarray}
Summing up the right-hand side of \eqref{e.c4bd} in $k$ and applying   Lemma \ref{lem.rho} (i)   gives:
  \begin{eqnarray}
 \sum_{s_{j_{a}}\leq  t_{k_{a}}<s_{j_{a}+1}} \bar{\vp}^{j_{a}  j_{a+1}}_{a4} &\lesssim&  n^{H-1/2}.
\label{e.fa4bd}
\end{eqnarray}

{\bf Estimate for $\mathbf e=5.$} 
Recall that $\vp_{a5}$ is defined in \eqref{e.fa5}.   Similar   to \eqref{e.faebd2} we have: 
\begin{eqnarray}
|\vp_{a5}^{j_{1}  \cdots  j_{a+1}}| \lesssim n^{-1/2-H}=:\bar{\vp}^{j_{a}  j_{a+1}}_{a5} . 
\label{e.fa5bd}
\end{eqnarray}

In summary of \eqref{e.fa1bd},  \eqref{e.fa2bd}, \eqref{e.fa3bd}, \eqref{e.fa4bd}, \eqref{e.fa5bd}, we have obtained the following:
\begin{eqnarray}
 \sum_{s_{j_{a}}\leq  t_{k_{a}}<s_{j_{a}+1}} \bar{\vp}^{j_{a}  j_{a+1}}_{ae} &\lesssim&  n^{\al_{e}},
\qquad e=1,\dots,5, 
\label{e.faebd}
\end{eqnarray}
where 
\begin{eqnarray*}
\al_{1}=
\begin{cases}
0&j_{a}=j_{a+1}
\\
2H-1&j_{a}\neq j_{a+1}
\end{cases}, \quad
\al_{2}=H,\quad \al_{3}=H-1/2,
\\
 \al_{4}=H-1/2,\quad \al_{5}=1/2-H.
\end{eqnarray*}

\noindent\emph{Step 4: Estimate for $\ci_{ae}$ and $  \cz^{(n)}_{1} $.} 
Applying the relation $| \vp_{ae}^{j_{1} \cdots  j_{a+1}}|  \lesssim   \bar{\vp}_{ae}^{j_{a}  j_{a+1}}$ in \eqref{e.faebd2}  and \eqref{e.fa5bd}   to $\ci_{ae}$, $e=1,\dots,5$  in  \eqref{e.iaej} we obtain:
\begin{eqnarray}
|\ci_{ae}| &\lesssim&  
\sum_{j_{1}, \dots,j_{a+1}=0}^{N-1}
\sum_{\substack{s_{j_{1}}\leq t_{k_{1}} <s_{j_{1}+1}\\\vdots\\s_{j_{a}}\leq  t_{k_{a}}<s_{j_{a}+1}}}  
\int_{0}^{1}\big|  \prod_{c=1}^{a-1}g_{c} \big| \cdot
    \bar{\vp}^{j_{a}   j_{a+1}}_{ae}  d\theta
. 
\label{e.iaebd3}
\end{eqnarray}
Note that $\bar{\vp}_{ae}$ does not depend on $k_{1}$, \dots, $k_{a-1}$. So   \eqref{e.iaebd3} can be  rewritten as: 
\begin{eqnarray}
|\ci_{ae}|  \lesssim    
\sum_{j_{1}, \dots,j_{a+1}=0}^{N-1}\sum_{s_{j_{a}}\leq  t_{k_{a}}<s_{j_{a}+1}}  
\int_{0}^{1}
    \bar{\vp}^{j_{a} j_{a+1}}_{ae}  
\sum_{s_{j_{a-1}}\leq t_{k_{a-1}} <s_{j_{a-1}+1}}  |  g_{a-1}  |
\notag
\\
\cdots\sum_{s_{j_{1}}\leq t_{k_{1}} <s_{j_{1}+1}}  |  g_{1}  |
d\theta
. 
\label{e.iaebd4}
\end{eqnarray}

Applying Lemma \ref{lem.abcd} to \eqref{e.gc} and then invoking Lemma \ref{lem.rho} gives the estimate: 
\begin{eqnarray}
 \sum_{s_{j_{c}}\leq t_{k_{c}} <s_{j_{c}+1}}  |g_{c}|&\lesssim& n^{H+1/2} 
   \sum_{s_{j_{c}}\leq t_{k_{c}} <s_{j_{c}+1}}\int_{t_{k_{c}}}^{t_{k_{c}+1}} n^{-2H} \big( \rho(k_{c}-k_{c+1})^{2H-1}
   \notag
   \\
 &&\qquad\qquad\qquad\qquad\qquad
 \qquad  +\rho(k_{c}-ns_{j_{c+1}})^{2H-1} \big) dv_{c} 
 \notag  \\
   &\lesssim& n^{H-1/2}. 
   \label{e.iaebd}
\end{eqnarray}
Applying  \eqref{e.iaebd} to \eqref{e.iaebd4} for $c=1,\dots,a-1$ and then invoking the estimate \eqref{e.faebd}  we obtain:  
\begin{eqnarray}
|\ci_{ae} | 
&\lesssim& \sum_{j_{1}, \dots,j_{a+1}=0}^{N-1}\sum_{s_{j_{a}}\leq  t_{k_{a}}<s_{j_{a}+1}}n^{(H-1/2)(a-1)} 
\sum_{s_{j_{a}}\leq  t_{k_{a}}<s_{j_{a}+1}}
\int_{0}^{1}
    \bar{\vp}^{j_{a} , j_{a+1}}_{ae}  d\theta
    \notag
\\
&\lesssim&  \sum_{j_{1}, \dots,j_{a+1}=0}^{N-1}\sum_{s_{j_{a}}\leq  t_{k_{a}}<s_{j_{a}+1}}
n^{(H-1/2)(a-1)}
n^{\al_{e}}   . 
  \label{e.iaebd2}
\end{eqnarray}

We turn to the estimate of  $\ci_{11}^{j_{1}j_{2}} $.  As in \eqref{e.iaebd3}     applying \eqref{e.faebd2}  and  \eqref{e.faebd} to  \eqref{e.i11ij}  gives: 
\begin{eqnarray}
\ci_{11}^{j_{1}j_{2}}&\lesssim&  \sum_{s_{j_{a}}\leq  t_{k_{a}}<s_{j_{a}+1}}
n^{(H-1/2)(a-1)}
n^{\al_{e}}   . 
\label{e.iaebd5}
\end{eqnarray}

Substituting the values of $\al_{e}$  into \eqref{e.iaebd2}-\eqref{e.iaebd5} for $e=1,\dots, 5 $ we   obtain that the following six quantities   
\begin{eqnarray}
|\ci_{a1}|, 
\qquad
 |\ci_{11}^{j_{1}j_{2}}| ,~ 
j_{1}\neq j_{2}  ,
\qquad |\ci_{a2}|,
\qquad |\ci_{a3}|,
\qquad |\ci_{a4}|,
\qquad |\ci_{a5}| 
\notag
\end{eqnarray}
are respectively bounded by 
   \begin{eqnarray}
   n^{(H-1/2)(a-1)} , ~~      n^{(H-1/2)(a+1)} ,  ~~
      n^{(H-1/2)(a-1)+H} ,
   \notag   \\
     n^{(H-1/2)a}  ,~~
     n^{(H-1/2)a} ,  ~~  
n^{(H-1/2)(a-2)}  .
\label{e.ibd}
\end{eqnarray}
Since $H-1/2<0$ these estimates imply  the following convergences: 
\begin{eqnarray}
\lim_{n\to\infty}\ci_{a1}^{j_{1}j_{2}}  = 0 
\qquad\qquad
&&\text{when $a\geq 2$ or when $a\geq 1$ and $j_{1}\neq j_{2}$}
\label{e.i1c}
\\
\lim_{n\to\infty}\ci_{a2} = 0 
\qquad\qquad
&&\text{when $a>  \frac{H}{1/2-H}+1$}
\label{e.i2c}
\\
\lim_{n\to\infty}\ci_{a3} =\lim_{n\to\infty}\ci_{a4} = 0 
\qquad\qquad
&&\text{when $a\geq  1$}
\label{e.i3c}
.
\end{eqnarray}
Furthermore, we have $
\lim_{n\to\infty}\ci_{a5} = 0$ 
   for  all  $a\geq  3 $.  
In the following we show that the convergence of $\ci_{a5}$ also holds for $a=2$, and therefore we have
\begin{eqnarray}
\lim_{n\to\infty}  \ci_{a5}  =0
\qquad \text{when $a\geq 2$.}
\label{e.ia5}
\end{eqnarray}  

Recall that $\ci_{25}$ and $\vp_{25}$ are defined in \eqref{e.iaej} and \eqref{e.fa5}. So we have 
\begin{eqnarray}
\ci_{25} &=& \sum_{j_{1}, j_{2}=0}^{N-1} \sum_{t_{k_{1}}, t_{k_{2}}}g_{1} \int_{0}^{1} \vp^{j_{1}j_{2}}_{25} d\theta , 
\label{e.i25}
\end{eqnarray}
where
\begin{eqnarray}
\vp^{j_{1}j_{2}}_{25}&=&  \mathbf{i}\theta   z_{j_{1}}n^{H+1/2} \int_{t_{k_{1}}}^{t_{k_{1}+1}} \mE  \big[
 \langle D {G}_{1}  ,  
  \mathbf{1}_{[t_{k_{1}},v_{1}]} \rangle_{\ch}x^{i-2 }_{\la(s_{j_{1}})t_{k_{1}}}\phi(\theta)
 \big]
      dv_{1}. 
      \label{e.f25}
\end{eqnarray}
Since  $D\cg_{1}=D  x^{i-1 }_{\la(s_{j_{1}})t_{k_{1}}}  =  x^{i-2 }_{\la(s_{j_{1}})t_{k_{1}}} \mathbf{1}_{[\la(s_{j_{1}}), t_{k_{1}}]}$, applying Lemma \ref{lem.abcd}     gives   the   estimate: 
\begin{eqnarray*}
\big|
\mE  [ \langle D\cg_{1},  
  \mathbf{1}_{[t_{k_{2}},v_{2}]} \rangle_{\ch}x^{i-2 }_{\la(s_{j_{2}})t_{k_{2}}}   \phi(\theta)] 
\big|&\lesssim& \langle \mathbf{1}_{[\la(s_{j_{1}}),t_{k_{1}}]} , \mathbf{1}_{[t_{k_{2}},v_{2}]} \rangle_{\ch}
\\
&\lesssim& n^{-2H}[\rho(k_{2}-k_{1})^{2H-1}+\rho(k_{2}-ns_{j_{1}})^{2H-1}]. 
\end{eqnarray*}
Substituting this estimate into \eqref{e.f25} gives:
\begin{eqnarray}
|\vp^{j_{1}j_{2}}_{25}|&\lesssim& n^{H-1/2}
n^{-2H}[\rho(k_{2}-k_{1})^{2H-1}+\rho(k_{2}-ns_{j_{1}})^{2H-1}]. 
\label{e.f25bd}
\end{eqnarray}
 Applying relation \eqref{e.f25bd} to \eqref{e.i25} and    bounding $g_{1}$   in the similar way as in  \eqref{e.iaebd}  we have  
\begin{eqnarray} 
|\ci_{25} | &\lesssim& \sum_{j_{1}, j_{2}=0}^{N-1}\sum_{\substack{s_{j_{1}}\leq t_{k_{1}} <s_{j_{1}+1}\\s_{j_{2}}\leq  t_{k_{2}}<s_{j_{2}+1}}}  n^{H-1/2}
n^{-2H}[\rho(k_{2}-k_{1})^{2H-1}+\rho(k_{2}-ns_{j_{1}})^{2H-1}]
\notag
\\
&&   \qquad\qquad\qquad  
\cdot
n^{-1/2-H}
[\rho(k_{2}-k_{1})^{2H-1}+\rho(k_{1}-ns_{j_{2}})^{2H-1}]
.
\label{e.i25bd1}
\end{eqnarray}
By applying Lemma \ref{lem.rho} (i) to the right-hand side of  \eqref{e.i25bd1} it is easy to see that 
\begin{eqnarray*}
|\ci_{25}| \lesssim  n^{-2H-1} (n^{4H }+n\log n)
 \leq n^{2H-1}\vee (n^{-2H}\log n)   
\end{eqnarray*}
for all $H<1/2$. This implies that $\lim_{n\to\infty}\ci_{25}\to0$. We thus conclude \eqref{e.ia5}.

Sending $n\to\infty$ in equation \eqref{e.czn1} and invoking 
   relations \eqref{e.i1c}-\eqref{e.i3c} and \eqref{e.ia5}  gives:
\begin{eqnarray}
  \lim_{n\to\infty}\cz^{(n)}_{1} =   \lim_{n\to\infty}   \sum_{j=0}^{N-1}\ci_{11}^{jj} . 
  \label{e.czn1bd}
\end{eqnarray}

\noindent\emph{Step 5: A decomposition  of $\ci_{11}^{jj}$.}
According to the definition of  $\ci_{11}^{jj}$   in \eqref{e.i11ij} we have
\begin{eqnarray}
\ci_{11}^{jj}&=& -n^{2H+1}z_{j}^{2}\int_{0}^{1}\mE     \phi(\theta)\theta \sum_{s_{j}\leq t_{k},t_{k'}<s_{j+1}}\int_{t_{k}}^{t_{k+1}} \int_{t_{k'}}^{t_{k'+1}}
   x^{i-1 }_{\la(s_{j})t_{k'}}  x^{i-1 }_{\la(s_{j})t_{k}}
\notag
\\
&&\qquad\qquad\qquad\qquad\qquad\qquad\qquad\qquad\qquad\cdot\langle \mathbf{1}_{[t_{k'},v']}      ,  
  \mathbf{1}_{[t_{k},v]}     
   \rangle_{\ch} dv' dv  d\theta  .
   \label{e.i11j}
\end{eqnarray}
Note that by the self-similarity and increment stationarity of fBm we have \begin{eqnarray*}
\langle \mathbf{1}_{[t_{k'},v']}      ,  
  \mathbf{1}_{[t_{k},v]}     
   \rangle_{\ch}   &=& (T/n)^{2H}\langle \mathbf{1}_{[ {k'},nv'/T]}      ,  
  \mathbf{1}_{[{k},nv/T]}     
   \rangle_{\ch}
  \\
  &=& (T/n)^{2H}\langle \mathbf{1}_{[ {k'}-k,nv'/T-k]}      ,  
  \mathbf{1}_{[{0},nv/T-k]}     
   \rangle_{\ch}.
\end{eqnarray*}
Applying this relation and     then     a change of variables  yields:  
\begin{eqnarray}
    \int_{t_{k}}^{t_{k+1}} \int_{t_{k'}}^{t_{k'+1}}
  \langle \mathbf{1}_{[t_{k'},v']}      ,  
  \mathbf{1}_{[t_{k},v]}     
   \rangle_{\ch} dv' dv  
         &=& (T/n)^{2H+2} \int_{ {0}}^{ {1}} \int_{ {k'-k}}^{ {k'-k+1}}
  \langle \mathbf{1}_{[ {k'}-k,v']}      ,  
  \mathbf{1}_{[ 0,v]}     
   \rangle_{\ch} dv' dv 
 \notag  \\
       &=&  \mu(k'-k) (T/n)^{2H+2} , 
      \label{e.rho2}
\end{eqnarray}
where recall that $\mu$ is defined in  \eqref{e.rho}.
  Substituting \eqref{e.rho2} into \eqref{e.i11j} we obtain 
\begin{eqnarray*}
\ci_{11}^{jj} &=& -n^{-1}T^{2H+2}   z_{j}^{2}\int_{0}^{1}\mE     \phi(\theta)\theta \sum_{s_{j}\leq t_{k} , t_{k'}<s_{j+1}}  x^{i-1 }_{\la(s_{j})t_{k'}}   
x^{i-1 }_{\la(s_{j})t_{k}}  
\mu(k'-k) d\theta 
\\
&=& -   z_{j}^{2} 
\int_{0}^{1}\mE   \wt{Z}^{n,j}   \phi(\theta)  \theta 
d\theta 
,  
\end{eqnarray*}
where
\begin{eqnarray*}
\wt{Z}^{n,j} &=& n^{-1}T^{2H+2} \sum_{s_{j}\leq t_{k} , t_{k'}<s_{j+1}}  x^{i-1 }_{\la(s_{j})t_{k'}}   
x^{i-1 }_{\la(s_{j})t_{k}}  
\mu(k'-k) .
\end{eqnarray*}

\noindent\emph{Step 6: Estimate for $\cz^{(n)}_{2}+\sum_{j=0}^{N-1} \ci_{11}^{jj}$.}
Recall that $\cz^{(n)}_{2}$ is defined in \eqref{e.i1i2}. So we have
\begin{eqnarray}
\cz^{(n)}_{2}+\sum_{j=0}^{N-1} \ci_{11}^{jj}&=&\sum_{j=0}^{N-1}       z_{j}^{2} 
\int_{0}^{1}\mE  \big[(F_{s_{j}s_{j+1}}-\wt{Z}^{n,j} )   \phi(\theta)\big]\theta 
d\theta
\notag
\\
 &\lesssim& \sum_{j=0}^{N-1}   |F_{s_{j}s_{j+1}}-\wt{Z}^{n,j} |_{L_{1}}. 
\label{e.fz2}
\end{eqnarray}
In the following we show that: 
\begin{eqnarray}
 |F_{s_{j}s_{j+1}}-\wt{Z}^{n,j} |_{L_{1}}\to0
 \qquad
 \text{ as $n\to\infty$ for $j=0,\dots, N-1$.}
 \label{e.fzc}  
\end{eqnarray}

 Consider the following decomposition: 
 \begin{eqnarray}
F_{s_{j}s_{j+1}}-\wt{Z}^{n,j}&=& \wt{Z}^{n,j}_{1}+ \wt{Z}^{n,j}_{2}+ \wt{Z}^{n,j}_{3},
\label{e.fz}
\end{eqnarray}
where
\begin{eqnarray}
 \wt{Z}^{n,j}_{1}&=&  {c_{H}}{T^{2H+1}}   \Big(  \int_{s_{j}}^{s_{j+1}} (x^{i-1 }_{s_{j}u})^{2} du-\frac{T}{n} \sum_{s_{j}\leq t_{k} <s_{j+1}}(x^{i-1 }_{s_{j} t_{k}})^{2} \Big) 
 \notag
 \\
  \wt{Z}^{n,j}_{2}&=& \frac1n  T^{2H+2} \sum_{s_{j}\leq t_{k} <s_{j+1}}(x^{i-1 }_{s_{j} t_{k}})^{2} \Big(  c_{H}-
  \sum_{s_{j}\leq  t_{k'}<s_{j+1}}\mu(k'-k) 
  \Big)
  \label{e.zt2}
   \\
  \wt{Z}^{n,j}_{3}&=& \frac1n  T^{2H+2} \sum_{s_{j}\leq t_{k}, t_{k'} <s_{j+1}}\big[    (x^{i-1 }_{s_{j} t_{k}})^{2}-x^{i-1 }_{\la(s_{j}) t_{k'}} x^{i-1 }_{\la(s_{j}) t_{k}}    \big]
 \mu(k'-k) .
   \label{e.zt3}
\end{eqnarray}

It is easy to see that we have the following two estimates:
\begin{eqnarray*}
\big|x^{i-1 }_{\la(s_{j})t_{k'}}x^{i-1 }_{\la(s_{j})t_{k}} -(x^{i-1 }_{ s_{j} t_{k}})^{2}   \big|_{L_{p}} \lesssim n^{-H} \rho(k-k')^{H}
\quad \text{and}\quad
|\mu(k'-k) |\lesssim \rho(k'-k)^{2H-2}. 
\end{eqnarray*}
Substituting these two estimates into \eqref{e.zt3}   gives: 
\begin{eqnarray*}
|\widetilde{Z}_{3}^{n,j}|_{L_{p}}
&\lesssim&  n^{-1} \sum_{s_{j}\leq t_{k},t_{k'}<s_{j+1}}
n^{-H} \rho(k-k')^{H} \cdot  \rho(k-k')^{2H-2}
\\
&\lesssim&
n^{-H-1} \sum_{s_{j}\leq t_{k},t_{k'}<s_{j+1}} \rho(k-k')^{3H-2}
. 
\end{eqnarray*}
Applying Lemma \ref{lem.rho} with $\al=3H-2$ gives:   
\begin{eqnarray*}
|\widetilde{Z}_{3}^{n,j}|_{L_{p}}&\lesssim& n^{-H-1} (n^{3H}+ n\log n)  
\end{eqnarray*}
for all $H<1/2$. 
In particular, we have the convergence   $|\widetilde{Z}_{3}^{n,j}|_{L_{p}} \to 0$ as $n\to\infty$. 

We turn to the estimate of $\widetilde{Z}_{2}^{n,j}$. 
Note that 
\begin{eqnarray}
\sum_{s_{j}\leq  t_{k'}<s_{j+1}}\mu(k'-k)  - c_{H}&=&\sum_{s_{j}\leq  t_{k'}<s_{j+1}}\mu(k'-k)  - \sum_{k'\in\ZZ}\mu(k'-k) 
\notag
\\
&=& \sum_{   t_{k'}<s_{j}}\mu(k'-k)+\sum_{   t_{k'}\geq s_{j+1}}\mu(k'-k)
. 
\label{e.rhos}
\end{eqnarray}
Since $\mu(k)\lesssim \rho(k)^{2H-2}$, applying Lemma \ref{lem.rho} (i) to \eqref{e.rhos}   we obtain the estimate:  
\begin{eqnarray*}
\big|\sum_{s_{j}\leq  t_{k'}<s_{j+1}}\mu(k'-k)  - c_{H}\big|&\lesssim& \rho(k-ns_{j})^{2H-1}+ \rho(ns_{j+1}-k)^{2H-1}   
\end{eqnarray*}
for $s_{j}\leq t_{k}<s_{j+1}$. 
Substituting this estimate into \eqref{e.zt2} for $ \wt{Z}_{2}^{n,j} $ and then applying Lemma \ref{lem.rho} we obtain the estimate: 
\begin{eqnarray*}
|\wt{Z}_{p}^{n,j}|_{L_{p}} \lesssim n^{-1}  \sum_{s_{j}\leq t_{k} <s_{j+1}}\lc \rho(k-ns_{j})^{2H-1}+ \rho(ns_{j+1}-k)^{2H-1} \rc \lesssim (1/n)^{1-2H}. 
\end{eqnarray*}
  It follows that $|\wt{Z}_{2}^{n,j}|_{L_{p}}\to0$ as $n\to\infty$. 
It is clear  that   we also have $\lim_{n\to\infty}| \wt{Z}^{n,j}_{1}|_{L_{p}}= 0$. 

In summary, we have shown that the right-hand side of \eqref{e.fz}  converges in $L_{p}$ to zero, and therefore the convergence in \eqref{e.fzc} holds. Invoking the relation \eqref{e.fz2} we thus have:    
\begin{eqnarray}
\cz^{(n)}_{2}+\sum_{j=0}^{N-1} \ci_{11}^{jj} \to 0
 \qquad \text{as $n\to\infty$.}
 \label{e.czz}
\end{eqnarray} 
  
%

%

   \noindent\emph{Step 7: Conclusion.} It follows from \eqref{e.czn2} and \eqref{e.czn1bd}   that we have
   \begin{eqnarray*}
\lim_{n\to\infty}\cz^{(n)}= \lim_{n\to\infty}(\cz^{(n)}_{1}  + \cz^{(n)}_{2})
= \lim_{n\to\infty}(\sum_{j=0}^{N-1}\ci_{11}^{jj}  + \cz^{(n)}_{2} ).
\end{eqnarray*}
Then by \eqref{e.czz} we obtain $\cz^{(n)}\to0$ as $n\to\infty$. This concludes the convergence \eqref{e.zpic}. 
\end{proof}


\subsection{Compensated weighted  sums}\label{section.lt}

Take $\ell\in\NN$. 
Let $(z,z', \dots,z^{(\ell-1)})$ be a continuous process controlled by $(x,\ell,H)$ almost surely (see Definition \ref{def.control}). 
In this section we establish   limit theorem for the following compensated weighted sum:
\begin{eqnarray}\label{e.ce}
\ce^{z, n}_{\ell}(s,t) &:=& \sum_{i=1}^{\ell}\cj_{s}^{t} (z^{(i-1)},h^{i}), 
\qquad (s,t)\in\cs_{2}([0,T]) ,
\end{eqnarray}
where 
\begin{eqnarray}
h^{i}_{st} &=&  \sum_{s\leq t_{k}<t} \int_{t_{k}}^{t_{k+1}} x^{i}_{t_{k}u} du,     
\label{e.hi}
\end{eqnarray}
     $x^{i}_{st} $ stands for $ (\delta x_{st})^{i}/i!$, and   the integral operator $\cj_{s}^{t}$ is defined in \eqref{e.jfg2}. 
As a first main step, we   consider a special case of \eqref{e.ce} when $z$ is a monomial of $x$. Namely, we define  
  \begin{eqnarray}
\ce_{L}^{x}(s,t) &:=& \sum_{i=1}^{L}\cj_{s}^{t} (x^{L-i},h^{i})   
\label{e.cex}
\end{eqnarray}
for $  (s,t)\in\cs_{2}(\ll0,T\rr)$ and $ L\in\NN$, where   $\cj_{s}^{t}$ is defined in \eqref{e.jfg}. 

\begin{lemma}\label{lem.cex}
Let $\ce^{x}_{L}(s,t)$, $  (s,t)\in\cs_{2}(\ll0,T\rr)$,  $ L\in\NN$ be defined in \eqref{e.cex}. Then 

\noindent (i) There exists a constant $K>0$ such that 
  the following estimate holds: 
\begin{eqnarray}
|\ce_{L}^{x}(s,t) |_{L_{p}}&\leq & K(1/n)^{H+1/2}(t-s)^{1/2+(L-1)H}   
\label{e.cexbdl2}
\end{eqnarray} 
for all $  (s,t)\in\cs_{2}(\ll0,T\rr)$, $n\in\NN$ and     $p\geq 1$.

\noindent(ii)  Let 
  $Z^{(n), L}_{st} $ be defined in \eqref{e.zn}. Then   we   have the convergence: 
\begin{eqnarray}
\lim_{n\to\infty}\sup_{ (s,t)\in\cs_{2}(\ll0,T\rr)}\big|n^{H+1/2}\ce_{L}^{x}(s,t) -  
Z^{(n), L}_{st}\big|_{L_{p}} = 0   
\label{e.cexbd5}
\end{eqnarray}
\end{lemma}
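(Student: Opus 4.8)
The plan is to first establish the $L_p$ estimate (i) by an inductive argument on $L$ that exploits the algebraic structure of the compensated monomial sum $\ce^x_L$, and then to deduce (ii) from (i) together with the definition of $Z^{(n),L}$ in \eqref{e.zn}. For part (i), the key observation is that $\ce^x_L(s,t)$ is itself a compensated sum of the type \eqref{e.ce} with the controlled process being the monomial $(x^{L-1}, x^{L-2}, \dots, 1)$ (up to the factorial normalizations in the definition of $x^i_{st}$). The natural strategy is to write $\ce^x_L(s,t)$ as a sum over the dyadic-type decomposition of the discrete interval $\ll s,t\rr$ and apply the discrete sewing / rough-path bound from Lemma \ref{lem.dr}: one shows that the ``increment'' $\ce^x_L(s,t) - \ce^x_L(s,u) - \ce^x_L(u,t)$ telescopes into a sum of products $x^{j}_{su}\cdot h^{i}$-type terms which are one order higher in Hölder regularity, so that a standard maximal-inequality / sewing argument upgrades a local bound at a single mesh interval $[t_k,t_{k+1}]$ — where $|\ce^x_L(t_k,t_{k+1})|_{L_p} \lesssim (1/n)^{1/2 + (L-1)H}\cdot (1/n)^H = (1/n)^{1/2 + LH}$ hmm, more carefully $(1/n)^{H+1/2}(t_{k+1}-t_k)^{0}$ for the base — to the global bound \eqref{e.cexbdl2}. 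The factor $(1/n)^{H+1/2}$ comes from the single integration $\int_{t_k}^{t_{k+1}} du$ against one increment $x_u - x_{t_k}$, which carries size $(1/n)^H\cdot(1/n) = (1/n)^{H+1}$; combined with the number $\asymp n(t-s)$ of intervals and a $\sqrt{\cdot}$ from the $L_p$ orthogonality of the Skorohod-type terms one recovers the exponent $1/2$ in $(t-s)^{1/2}$.

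More precisely, I would argue as follows. First, rewrite $\ce^x_L(s,t)$ in Skorohod form: using the relation between pathwise products and $\delta^\diamond$, each summand $x^{L-i}_{\la(s)t_k}\otimes h^i_{t_kt_{k+1}} = x^{L-i}_{\la(s)t_k}\int_{t_k}^{t_{k+1}} x^i_{t_ku}\,du$ can be expanded, via the product formula $H_a(\cdot)H_b(\cdot) = \sum \binom{}{} H_{a+b-2r}(\cdot)$ for Hermite polynomials, into a combination of iterated Skorohod integrals plus lower-order trace terms. The leading term is exactly (a constant multiple of) $\int_{t_k}^{t_{k+1}}\delta^\diamond(x^{L-1}_{\la(s)t_k}\mathbf{1}_{[t_k,v]})\,dv$, which is the summand of $Z^{(n),L}$ after the $n^{H+1/2}$ normalization; all the contraction terms are of strictly higher order in $1/n$ and will vanish in the limit. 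This expansion simultaneously prepares the ground for (ii). Then the $L_p$ bound on the leading Skorohod part follows from the Malliavin $L_p$ isometry-type inequality (Meyer's inequalities) together with the covariance estimate Lemma \ref{lem.abcd} and the $p$-series bound Lemma \ref{lem.rho} with $\alpha = 2H-2 < -1$, exactly as in Steps 3–4 of the proof of Theorem \ref{prop.du}; the diagonal terms contribute $(t-s)^{1/2+(L-1)H}$ and the off-diagonal ones are summable. Finally, \eqref{e.cexbdl2} for the whole $\ce^x_L$ (not just its leading part) follows by absorbing the contraction terms, which are $o$ of the leading term uniformly.

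For part (ii), set $D^{(n)}_{st} := n^{H+1/2}\ce^x_L(s,t) - Z^{(n),L}_{st}$. By the Hermite expansion from the previous paragraph, $D^{(n)}_{st}$ is precisely $n^{H+1/2}$ times the sum of the contraction (lower-chaos) terms, each of which I would bound, using Lemma \ref{lem.abcd} and Lemma \ref{lem.rho}, by $K(1/n)^{\beta}(t-s)^{\gamma}[\log(n(t-s))]^{\delta}$ with $\beta > 0$ strictly; then Lemma \ref{lem.stnbd} converts this into $\sup_{(s,t)}|D^{(n)}_{st}|_{L_p}\to 0$, which is \eqref{e.cexbd5}. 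The main obstacle I anticipate is the bookkeeping of the Hermite product expansion — correctly identifying which term is the ``leading'' $Z^{(n),L}$ contribution and verifying that every remaining contraction term genuinely gains a positive power of $1/n$ (rather than merely a positive power of $(t-s)$, which would not suffice for the uniform-in-$(s,t)$ statement). This is where the precise exponents $H < 1/2$, the $(1/n)$ from the Lebesgue integration $du$, and the cancellation structure built into the compensated sum all have to be tracked carefully; the rest is a routine adaptation of the estimates already carried out in the proof of Theorem \ref{prop.du}.
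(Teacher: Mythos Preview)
Your Hermite/chaos expansion approach is the right one and coincides with the paper's method, but there is a genuine gap at the heart of the argument, not merely ``bookkeeping''. You assert that after isolating the leading term $\int_{t_k}^{t_{k+1}}\delta^\diamond(x^{L-1}_{\la(s)t_k}\mathbf{1}_{[t_k,v]})\,dv$, ``every remaining contraction term genuinely gains a positive power of $1/n$''. This is false for the fully contracted (zeroth-chaos) terms. Concretely, when you expand $x^i_{t_k u}=(u-t_k)^{iH}X_k^i/i!$ into Hermite polynomials and then pull $x^{L-i}_{st_k}$ inside the iterated $\delta^\diamond$ via $F\delta^\diamond(u)=\delta^\diamond(Fu)+\langle DF,u\rangle_\ch$, you obtain a family of terms indexed by $(i,q,j)$ with $q$ the Hermite index and $j$ the number of contractions against $\mathbf{1}_{[s,t_k]}$. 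The terms with $i-2q-j=0$ land in the zeroth chaos and, after using $\langle\mathbf{1}_{(-\infty,t_k]},\mathbf{1}_{[t_k,u]}\rangle_\ch=-\tfrac12(u-t_k)^{2H}$, take the form
\[
c_{i,q,j}\,(1/n)^{(i+j)H+1}\sum_{s\le t_k<t}x^{L-i-j}_{st_k}.
\]
For the smallest level $i+j=2$ this has $L_p$-size $(1/n)^{2H}(t-s)^{1+(L-2)H}$, and since $H<1/2$ one has $2H<H+1/2$: these terms are \emph{individually too large} to satisfy \eqref{e.cexbdl2} or to survive the $n^{H+1/2}$ normalization in \eqref{e.cexbd5}.

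What makes the lemma true is an exact algebraic cancellation, not an estimate: for each fixed $\tau=q+j$ the coefficients satisfy
\[
\sum_{q+j=\tau}\frac{(-1)^j}{2^q q!\,j!}\;=\;\frac{1}{2^\tau}\sum_{j=0}^\tau\frac{(-1)^j}{j!(\tau-j)!}\;=\;\frac{(1-1)^\tau}{2^\tau\,\tau!}\;=\;0,
\]
so the dangerous zeroth-chaos contributions at each level $\tau\ge 1$ sum to zero identically (this is the paper's Step~5). Only after this cancellation do the residual pieces (the error from replacing $\mathbf{1}_{[s,t_k]}$ by $\mathbf{1}_{(-\infty,t_k]}$, and the higher-chaos terms with $i-2q-j\ge 1$) carry enough $1/n$-decay to be handled by Lemmas~\ref{lem.abcd}, \ref{lem.rho}, \ref{lem.stnbd}. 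Your proposal does not identify this mechanism; without it the argument breaks down precisely at the point you flagged as a concern.

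A smaller remark: your opening discrete-sewing idea does yield the clean identity $\delta\ce^x_L(s,u,t)=\sum_{j=1}^{L-1}x^j_{su}\,\ce^x_{L-j}(u,t)$, but it cannot replace the chaos argument either: the required increment exponent $(L-1)H+\tfrac12$ need not exceed $1$ for small $L$, and the ``germ'' $\ce^x_L(t_k,t_{k+1})=h^L_{t_k t_{k+1}}$ contributes $(1/n)^{LH}(t-s)$ after summation, which again beats $(1/n)^{H+1/2}$ for small $L$. The sewing route is exactly what the paper uses one level up (Theorem~\ref{thm.taylor}) \emph{after} Lemma~\ref{lem.cex} is available.
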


\begin{proof}
The   proof  is divided  into several steps.   

\noindent\emph{Step 1: Decomposition of $\ce_{L}^{x}(s,t) $.}
Let  $X_{k} = \delta x_{t_{k}u}(u-t_{k})^{-H}$, $k=0,\dots,n-1$. It is clear that $X_{k}$ is a standard normal random variable, and   we can write: 
\begin{eqnarray}
x^{i}_{t_{k}u} =(\delta x_{t_{k}u})^{i}/i!= X_{k}^{i}\cdot (u-t_{k})^{iH}/{i!}. 
\label{e.xk}
\end{eqnarray}
 Substituting \eqref{e.xk} into \eqref{e.cex}  we get:  
\begin{eqnarray}
\ce_{L}^{x} (s,t)&=&\sum_{i=1}^{L}\sum_{s\leq t_{k}<t} x_{st_{k}}^{ L-i} \int_{t_{k}}^{t_{k+1}}      \frac{1}{i!} X_{k}^{i}\cdot(u-t_{k})^{iH} du  .
\label{e.ce1}
\end{eqnarray}
Recall that for  monomials we have    the following Hermite decomposition: 
\begin{eqnarray}
X^{i}_{k} =  \sum_{q=0}^{[\frac{i}{2}]}a^{i}_{i-2q} {H_{i-2q}(X_{k})}  \,, 
\qquad
a^{i}_{i-2q} = \frac{i!}{2^{q}q!(i-2q)!}\,,   
\qquad 
i\in\NN . 
\label{e.xhermite}
\end{eqnarray}
 Substituting \eqref{e.xhermite} into   \eqref{e.ce1} and then taking into account the formula   of $a^{i}_{i-2q} $ we obtain: 
\begin{eqnarray}
\ce_{L}^{x}(s,t)&=& \sum_{i=1}^{L} \sum_{s\leq t_{k}<t}x_{st_{k}}^{ L-i}   \int_{t_{k}}^{t_{k+1}}   \frac{1}{i!} \cdot\sum_{q=0}^{[\frac{i}{2}]}a^{i}_{i-2q} H_{i-2q}(X_{k})
\cdot (u-t_{k})^{iH} du 
\notag
\\
&=&\sum_{i=1}^{L} \sum_{s\leq t_{k}<t}  \int_{t_{k}}^{t_{k+1}}        \sum_{q=0}^{[\frac{i}{2}]}
\frac{1}{2^{q}q!(i-2q)!}
\lc x_{st_{k}}^{ L-i} H_{i-2q}(X_{k}))  \rc  (u-t_{k})^{iH} du  . 
 \label{e.ce2} 
\end{eqnarray}

Now let us derive a decomposition of the quantity $x_{st_{k}}^{ L-i} H_{i-2q}(X_{k}) $ in \eqref{e.ce2}. 
By the definition of $X_{k}$ it is clear that we have the relation $X_{k} = \delta^{\diamond} ( \mathbf{1}_{[t_{k},u]} (u-t_{k})^{-H} )$. According to relation \eqref{e.dkh} we can thus write: 
\begin{eqnarray*}
H_{i-2q}(X_{k}) =\delta^{{\diamond},i-2q} \big(( \mathbf{1}_{[t_{k},u]} (u-t_{k})^{-H})^{\otimes (i-2q)}\big)
=\delta^{{\diamond},i-2q}( \mathbf{1}_{[t_{k},u]}^{\otimes (i-2q)} )  (u-t_{k})^{-(i-2q)H}. 
\end{eqnarray*}
It follows that:  
\begin{eqnarray}
x_{st_{k}}^{ L-i} {H_{i-2q}(X_{k})}  &=&  
x_{st_{k}}^{L-i}\cdot \delta^{{\diamond},i-2q}( \mathbf{1}_{[t_{k},u]}^{\otimes (i-2q)} )  (u-t_{k})^{-(i-2q)H}. 
\label{e.xhq}
\end{eqnarray}
Applying the identity $F\delta^{\diamond} (u)=\delta^{\diamond} (Fu)+\langle DF,u \rangle_{\ch}$ to the right-hand side of \eqref{e.xhq} several times we obtain   the following relation: 
\begin{eqnarray}
x_{st_{k}}^{ L-i} {H_{i-2q}(X_{k})}  =
 (u-t_{k})^{-(i-2q)H} \sum_{j=0}^{(i-2q)\wedge(L-i)}  
\binom{i-2q}{j}
\delta^{{\diamond},i-2q-j}\big( x^{ L-i-j}_{st_{k}} 
\mathbf{1}_{[t_{k},u]}^{\otimes (i-2q-j)}
\big)
\notag
\\
\cdot \langle \mathbf{1}_{[s,t_{k}]} ,\mathbf{1}_{[t_{k},u]} \rangle_{\ch}^{j}.
\label{e.xhq2}
\end{eqnarray}

Now we come back to the expression of $\ce_{L}^{x}(s,t)$ in \eqref{e.ce2}. 
Substituting \eqref{e.xhq2} into \eqref{e.ce2} we obtain: 
\begin{eqnarray}
\ce_{L}^{x}(s,t) = \sum_{i=1}^{L} \sum_{q=0}^{[\frac{i}{2}]} \sum_{j=0}^{(i-2q)\wedge(L-i)}\ce_{Liqj}^{x} (s,t),
\label{e.cex2}
\end{eqnarray}
where
\begin{eqnarray}
\ce_{Liqj}^{x}(s,t) &=& \sum_{s\leq t_{k}<t}  \int_{t_{k}}^{t_{k+1}}        
\frac{\binom{i-2q}{j}}{2^{q}q!(i-2q)!}
(u-t_{k})^{2qH} 
\notag
\\
&&
\qquad\qquad\qquad\qquad
\cdot\delta^{{\diamond},i-2q-j}\lp x^{ L-i-j}_{st_{k}} 
\mathbf{1}_{[t_{k},u]}^{\otimes (i-2q-j)}
\rp \langle \mathbf{1}_{[s,t_{k}]} ,\mathbf{1}_{[t_{k},u]} \rangle_{\ch}^{j}
   du. 
   \label{e.ceij}
\end{eqnarray}

Define  the following      index sets for $i,q,j$: 
\begin{eqnarray}
A &=& \{(i,q,j): 1\leq i\leq L,\, 0\leq q \leq [i/2],  \, j \leq L-i,\, j = i-2q\}, 
\label{e.aset}
\\
A' &=& \{(i,q,j): 1\leq i\leq L, 0\leq q \leq [i/2],   j   \leq L-i, j < i-2q\}. 
\label{e.aset2}
\end{eqnarray} 
Then we can write the summation in \eqref{e.cex2} as: 
\begin{eqnarray}
\ce_{L}^{x}(s,t) &=& \sum_{(i,q,j)\in A\cup A'} \ce_{Liqj}^{x} (s,t).   
\label{e.cex3}
\end{eqnarray}

\noindent\emph{Step 2: Illustration of the cancellations in $\ce_{L}^{x}(s,t) $.}
In Section \ref{section.intro} we have mentioned that there are   cancellations within the     compensated sum $\ce^{x}_{L}(s,t)$. With the decomposition \eqref{e.cex2} in hand let us now be more specific about them: these cancellations  occur among the components $\ce_{Liqj}^{x} (s,t)$   in \eqref{e.cex3} for which $(i,q,j)\in A$.  

The following table contains the components $\ce_{Liqj}^{x} (s,t)$, $(i,q,j)\in A$ in \eqref{e.cex3}. As an example we have taken  $L=6$. 
\[ 
\begin{array}{|c|c|c|c|c|c|c|}
\hline  &i=1  & i=2  &i=3  &i=4&i=5 &i=6   \\ \hline   j=6&&   &  &&   &\ce^{x}_{L606}    \\ \hline j=5&   &   &  &   &\ce^{x}_{L505} & 
 \\ \hline  j=4& &   &     &\ce^{x}_{L404} & &\ce^{x}_{L614}
 \\ \hline j=3&  &   &    \ce^{x}_{L303} &  &\ce^{x}_{L513} & 
 \\
 \hline j=2&   & \ce^{x}_{L202}   &     & \ce^{x}_{L412} &  &\ce^{x}_{L622}
 \\
 \hline j=1&
  \ce^{x}_{L101} &    &   \ce^{x}_{L311}& &\ce^{x}_{L521} &  \\
 \hline j=0&  &  \ce^{x}_{L210} &    & \ce^{x}_{L420} & &\ce^{x}_{L630}
 \\\hline
\end{array} 
\]
We will see (in   Step 5)  that there are three cancellations in the above table. 
More precisely, we will show the following relations: 
\begin{eqnarray}
&&n^{H+1/2}(\ce^{x}_{L101}+\ce^{x}_{L210})= o(1), 
\qquad
n^{H+1/2}(\ce^{x}_{L202}+\ce^{x}_{L311}+\ce^{x}_{L420}) = o(1)
\notag
\\
&& n^{H+1/2}(\ce^{x}_{L303}+\ce^{x}_{L412}+\ce^{x}_{L521}+\ce^{x}_{L630})= o(1). 
\notag
\end{eqnarray}

\noindent\emph{Step 3: Second decomposition of $\ce_{L}^{x}(s,t) $.}
Let us make the following decomposition of relation \eqref{e.cex3}:
\begin{eqnarray}
\ce_{L}^{x}(s,t) = \ce_{L}^{x,1}(s,t)+\ce_{L}^{x,2}(s,t)+\ce_{L}^{x,3}(s,t) ,  
\label{e.cexd}
\end{eqnarray}
where
\begin{eqnarray}
&& \ce_{L}^{x,1}(s,t)=\sum_{(i,q,j)\in A}\ce_{Liqj}^{x} (s,t),
\qquad\quad
\ce_{L}^{x,2}(s,t)=
\sum_{(i,q,j)\in A'\cap \{i\geq 2\}}\ce_{Liqj}^{x} (s,t). 
\label{e.cexa}
\\
&&\ce_{L}^{x,3}(s,t) =\sum_{(i,q,j)\in  A'\cap \{i=1\}}\ce_{Liqj}^{x} (s,t)
. 
\label{e.cexc}
\end{eqnarray}

In the following steps we derive the estimate for each  $ \ce_{L}^{x,i}(s,t) $,  $i=1,2,3$, which together will give the desired relations \eqref{e.cexbdl2}-\eqref{e.cexbd5} for $ \ce_{L}^{x}(s,t) $. More specifically,  Step 4-7 considers     the cancellation and  estimate for $ \ce_{L}^{x,1}(s,t) $. Step 8-11 derives the estimate for $ \ce_{L}^{x,2}(s,t) $, and Step 12 is for $ \ce_{L}^{x,3}(s,t) $. Finally, in Step 13 we report these estimates to \eqref{e.cexd}. 

\noindent\emph{Step 4: The decomposition  of $\ce_{Liqj}^{x}(s,t) $ when $(i,q,j)\in A$.}
Suppose that  $i$, $q$ and $j$ are such that   $j=i-2q$, or equivalently,    $i+j=2(j+q)$. In this case we have 
\begin{eqnarray*}
\frac{\binom{i-2q}{j}}{2^{q}q!(i-2q)!} = \frac{1}{2^{q}q!j!}
\qquad
\text{and}
\qquad
\delta^{{\diamond},i-2q-j}\big( x^{ L-i-j}_{st_{k}} 
\mathbf{1}_{[t_{k},u]}^{\otimes (i-2q-j)}
\big) = x^{ L-i-j}_{st_{k}} , 
\end{eqnarray*}
and therefore  \eqref{e.ceij}  becomes: 
\begin{eqnarray}
\ce_{Liqj}^{x}(s,t) &=& \sum_{s\leq t_{k}<t}   x^{L-i-j}_{st_{k}} \int_{t_{k}}^{t_{k+1}}        
\frac{1}{2^{q}q!j!}
(u-t_{k})^{2qH} 
  \langle \mathbf{1}_{[s,t_{k}]} ,\mathbf{1}_{[t_{k},u]} \rangle_{\ch}^{j}
   du
  \notag
 \\
 &=& \wt{\ce}_{Liqj}^{x}(s,t)  +r^{ijq},
    \label{e.etaijq}
\end{eqnarray}
where 
\begin{eqnarray}
\qquad \wt{\ce}_{Liqj}^{x}(s,t) &=& \sum_{s\leq t_{k}<t}   x^{L-i-j}_{st_{k}} \int_{t_{k}}^{t_{k+1}}        
\frac{1}{2^{q}q!j!}
(u-t_{k})^{2qH} 
  \langle \mathbf{1}_{(-\infty,t_{k}]} ,\mathbf{1}_{[t_{k},u]} \rangle_{\ch}^{j} 
   du 
   \label{e.ceb}
\\
r^{ijq} &=&  \sum_{s\leq t_{k}<t}   x^{ L-i-j}_{st_{k}} \int_{t_{k}}^{t_{k+1}}        
\frac{1}{2^{q}q!j!}
(u-t_{k})^{2qH} 
\notag
 \\
 &&\quad\qquad\qquad\qquad\qquad
\cdot \lp \langle \mathbf{1}_{[s,t_{k}]} ,\mathbf{1}_{[t_{k},u]} \rangle_{\ch}^{j}-\langle \mathbf{1}_{(-\infty,t_{k}]} ,\mathbf{1}_{[t_{k},u]} \rangle_{\ch}^{j}\rp
   du.
   \label{e.rijq}
\end{eqnarray}
Note that 
\begin{eqnarray}
\langle \mathbf{1}_{(-\infty,t_{k}]} ,\mathbf{1}_{[t_{k},u]} \rangle_{\ch}=-\frac12 \cdot (u-t_{k})^{2H} . 
\label{e.half}
\end{eqnarray}
Substituting \eqref{e.half} into \eqref{e.ceb} and taking into account that $2q+2j=i+j$ we obtain: 
 \begin{eqnarray}
   \wt{\ce}_{Liqj}^{x}(s,t)  &=& \sum_{s\leq t_{k}<t}   x^{ L-i-j}_{st_{k}} \int_{t_{k}}^{t_{k+1}}        
\frac{1}{2^{q}q!j!}
(u-t_{k})^{ (i+j)H} 
 (-1/2)^{j}
   du  ,
   \label{e.ceb2}
\end{eqnarray}

\noindent\emph{Step 5: The cancellation of $\wt{\ce}_{Liqj}^{x}(s,t) $ for $(i,q,j)\in A$.}
Define the index set:
\begin{eqnarray*}
A_{\tau}&=& \{  (\tau,0,\tau),(\tau+1,1,\tau-1),\dots,(2\tau,\tau,0)\} 
\end{eqnarray*}
for   $\tau\in\NN$. Recall that the set $A$ is defined in \eqref{e.aset}.
 It is easy to see that   we have the inclusion relation $A_{\tau}\subset A$   when      $\tau=1,\dots,[L/2]$. 

Recall that we have the relation    \eqref{e.ceb2} for $\wt{\ce}_{Liqj}^{x}(s,t)  $. Summing up \eqref{e.ceb2} we have: 
\begin{eqnarray}
\sum_{(i,q,j)\in A_{\tau}} \wt{\ce}_{Liqj}^{x}(s,t)  = \sum_{(i,q,j)\in A_{\tau}} \sum_{s\leq t_{k}<t}   x^{ L-i-j}_{st_{k}} \int_{t_{k}}^{t_{k+1}}        
\frac{1}{2^{q}q!j!}
(u-t_{k})^{ (i+j)H} 
\notag\\
\cdot \big(-\frac12\big)^{j} 
   du. 
   \label{e.ceb3}
\end{eqnarray}
Note that for  $(i,q,j)\in A_{\tau}$ we have     $i+j=2\tau$ and $j+q=\tau$. So equation \eqref{e.ceb3} gives 
\begin{eqnarray}
  \sum_{(i,q,j)\in A_{\tau}} \wt{\ce}_{Liqj}^{x}(s,t)    &=& \sum_{s\leq t_{k}<t}   x^{ L-2\tau}_{st_{k}} \int_{t_{k}}^{t_{k+1}}   2^{-\tau}\Big(     
\sum_{(i,q,j)\in A_{\tau}} \frac{(-1)^{j}}{q!j!}\Big)
(u-t_{k})^{ 2\tau H} 
   du
\notag\\
   &=&0. 
   \label{e.cebz}
\end{eqnarray}

Now let 
$\wt{A}_{\tau}=\{(i,q,j)\in A:q+j=\tau \}  $.
It is easy to see that we have  $A_{\tau}= \wt{A}_{\tau}$ for     $\tau=1,\dots,[L/2]$, and $\wt{A}_{\tau}=\emptyset$ for $\tau> [L/2]$. It follows that  
\begin{eqnarray}\label{e.aatau}
A = \bigcup_{\tau=1}^{\infty}\wt{A}_{\tau}
= \bigcup_{\tau=1}^{[L/2]}\wt{A}_{\tau}= {\bigcup_{\tau=1}^{[L/2]}} A_{\tau}. 
\end{eqnarray}
 Applying  \eqref{e.aatau} and then  the relation  \eqref{e.cebz} we obtain:    
\begin{eqnarray}
\sum_{(i,q,j)\in A}   \wt{\ce}_{Liqj}^{x}(s,t) =   \sum_{\tau=1}^{[L/2]}\sum_{(i,q,j)\in A_{\tau}}    \wt{\ce}_{Liqj}^{x}(s,t) =0.
\label{e.cexbcancel}
\end{eqnarray}

\noindent\emph{Step 6: The estimate of $r^{ijq}$ when $(i,q,j)\in A$.}
Recall that $r^{ijq}$ is defined in \eqref{e.rijq}. 
By applying  Lemma \ref{lem.abcd} we have  the following estimate: 
\begin{eqnarray}
|\langle \mathbf{1}_{[s,t_{k}]} ,\mathbf{1}_{[t_{k},u]} \rangle_{\ch}^{j}-\langle \mathbf{1}_{(-\infty,t_{k}]} ,\mathbf{1}_{[t_{k},u]} \rangle_{\ch}^{j}| &\lesssim& (u-t_{k})^{2jH} \rho( k-ns )^{2H-1}
\notag
\\
&\leq& (1/n)^{2jH} \rho( k-ns )^{2H-1}.
\label{e.rijqbd1}
\end{eqnarray}
Applying   \eqref{e.rijqbd1} to \eqref{e.rijq} and then applying Lemma \ref{lem.rho} (i) with $\al=2H-1>-1$
gives 
\begin{eqnarray}
|r^{ijq}|_{L_{p}}&\lesssim&\sum_{s\leq t_{k}<t} (1/n)^{2(q+j)H+1} \rho(k-ns)^{2H-1} (t-s)^{(L-i-j)H} 
\notag\\
&\lesssim&  (1/n)^{2(q+j)H+1} (n(t-s))^{2H} (t-s)^{(L-i-j)H} 
\notag\\
&=&(1/n)^{2(q+j-1)H+1}   (t-s)^{(L-i-j+2)H}  .  
\label{e.rbd1}
\end{eqnarray}

 Take  $\al=2(q+j-1)H+1$, $\be=(L-i-j+2)H$, $\ga=0$  in Lemma \ref{lem.stnbd}. According to \eqref{e.rbd1}    the relation \eqref{e.abd1} holds for $A(s,t,n):=|r^{ijq}|_{L_{p}}$. Take $\ep=\al-(H+1/2)$. 
Since $ 2(q+j) =  {i+j}{ }  \geq 2$,  we have  $ 2(q+j-1)H\geq0$, and thus $\al\geq1$. This implies that  $\ep>0$. In summary, we have shown that the conditions of Lemma \ref{lem.stnbd} hold for $|r^{ijq}|_{L_{p}}$. So applying Lemma \ref{lem.stnbd} we have  the estimate \eqref{e.abd5}, namely:   
\begin{eqnarray}
|r^{ijq}|_{L_{p}}&\leq&K (1/n)^{H+1/2} (t-s)^{(L-1)H+1/2} 
\label{e.rbd}
\end{eqnarray}
and  
\begin{eqnarray}
\lim_{n\to\infty}n^{H+1/2}\sup_{(s,t)\in\cs_{2}(\ll0,T\rr)}|r^{ijq}|_{L_{p}} =0  
\qquad
\text{for all $(i,q,j)\in A$. }
\label{e.rlim}
\end{eqnarray}

\noindent\emph{Step 7: Estimate of $\ce_{L}^{x,1}(s,t)$.} Substituting the relation \eqref{e.etaijq}   into   \eqref{e.cexa} gives:  
\begin{eqnarray}
\ce_{L}^{x,1}(s,t) =
\sum_{(i,q,j)\in A}\wt{\ce}_{Liqj}^{x} (s,t) +\sum_{(i,q,j)\in A} r^{ijq}.
\label{e.cexa2}
\end{eqnarray}
Applying  \eqref{e.cexbcancel} to \eqref{e.cexa2}    we get: 
 \begin{eqnarray}
\ce_{L}^{x,1}(s,t) = \sum_{(i,q,j)\in A} r^{ijq}. 
\label{e.cexa1}
\end{eqnarray}
Now invoking the relations \eqref{e.rbd}-\eqref{e.rlim} we obtain:
 \begin{eqnarray}
|\ce_{L}^{x,1}(s,t) |_{L_{p}}&\leq&K (1/n)^{H+1/2} (t-s)^{(L-1)H+1/2}
\label{e.cexabd}
\end{eqnarray}
 and  
 \begin{eqnarray}
\lim_{n\to\infty}n^{H+1/2}\sup_{(s,t)\in\cs_{2}(\ll0,T\rr)}|\ce_{L}^{x,1}(s,t) |_{L_{p}} =0. 
\label{e.cexal} 
\end{eqnarray}

\noindent\emph{Step 8: The estimate of $\ce_{Liqj}^{x}(s,t)$ when $(i,q,j)\in A'$.} 
Recall that $\ce_{Liqj}^{x}(s,t)$ is defined in~\eqref{e.ceij}. 
Take $(i,q,j)$ such that $i-2q-j>0$. 
It follows from  \eqref{e.ceij} that we have 
\begin{eqnarray*}
|\ce_{Liqj}^{x}(s,t)|_{L_{2}}^{2} 
 &\lesssim& (1/n)^{4(q+j)H} \sum_{s\leq t_{k},t_{k'}<t} \int_{t_{k}}^{t_{k+1}}  \int_{t_{k'}}^{t_{k'+1}}  \big|\ca_{kk'uu'}\big|du' du.  
\end{eqnarray*}
where
\begin{eqnarray*}
\ca_{kk'uu'}&=& \mE\lc\delta^{{\diamond},i-2q-j}\lp x^{ L-i-j}_{st_{k}} 
\mathbf{1}_{[t_{k},u]}^{\otimes (i-2q-j)}
\rp \delta^{{\diamond},i-2q-j}\lp x^{ L-i-j}_{st_{k'}} 
\mathbf{1}_{[t_{k'},u']}^{\otimes (i-2q-j)}
\rp\rc . 
\end{eqnarray*}
By symmetry in $k$ and $k'$ we also have
\begin{eqnarray}
|\ce_{Liqj}^{x}(s,t)|_{L_{2}}^{2} 
&\lesssim& (1/n)^{4(q+j)H} \sum_{s\leq t_{k}\leq t_{k'}<t} \int_{t_{k}}^{t_{k+1}}  \int_{t_{k'}}^{t_{k'+1}}  \big|\ca_{kk'uu'}\big|du' du.  
 \label{e.cexbd}
\end{eqnarray}
Note that by applying the integration by parts it is easy to see that $\ca_{kk'uu'}$ is equal to the sum of terms in the following form:  
\begin{eqnarray}
\ca_{kk'uu'}^{c}  &:=& \langle \mathbf{1}_{[s,t_{k}]}, \mathbf{1}_{[t_{k'}, u']} \rangle_{\ch}^{c}\langle \mathbf{1}_{[s,t_{k'}]}, \mathbf{1}_{[t_{k}, u]} \rangle_{\ch}^{c}
\notag
\\
&&  \qquad\qquad\cdot\langle \mathbf{1}_{[t_{k'}, u']} , \mathbf{1}_{[t_{k}, u]} \rangle_{\ch}^{i-2q-j-c} 
 \mE[x_{st_{k}}^{ L-i-j-c}x_{st_{k'}}^{ L-i-j-c}] 
\label{e.ac}
\end{eqnarray}
for 
$ c=0, \dots, (i-2q-j)\wedge( L-i-j)$. So it follows from \eqref{e.cexbd} that we have the estimate 
\begin{eqnarray}
 |\ce_{Liqj}^{x}(s,t)|_{L_{2}}^{2}  \lesssim  (1/n)^{4(q+j)H}\sum_{c=0}^{(i-2q-j)\wedge( L-i-j)}\sum_{s\leq t_{k}\leq t_{k'}<t}  \int_{t_{k}}^{t_{k+1}}  \int_{t_{k'}}^{t_{k'+1}}      \big|\ca_{kk'uu'}^{c}\big|  
\notag\\
\qquad\qquad\qquad\qquad
\cdot   du' du.  
   \label{e.cexbd1}
\end{eqnarray}

Applying Lemma \ref{lem.abcd} to  \eqref{e.ac} gives the estimate:   
\begin{eqnarray}
\big|
\ca_{kk'uu'}^{c}
\big| 
&\lesssim& (1/n)^{4cH+2(i-2q-j-c)H}\rho(k-k')^{(2H-1)c}
\notag\\
&& \quad
\cdot(\rho(k-k')^{(2H-1)c}+\rho(k-ns)^{(2H-1)c}) 
\notag
\\
&&
 \qquad
\cdot\rho(k-k')^{(2H-2)(i-2q-j-c)}\cdot (t-s)^{2( L-i-j-c)H}  
\label{e.acbd}
\end{eqnarray}
for all $u\in[t_{k}, t_{k+1}]$, $u'\in[t_{k'}, t_{k'+1}]$ and $k\leq k'$. 
Substituting \eqref{e.acbd} into \eqref{e.cexbd1} we get
\begin{eqnarray}
|\ce_{Liqj}^{x}(s,t)|_{L_{2}}^{2} &\lesssim& \sum_{c=0}^{(i-2q-j)\wedge( L-i-j)} (t-s)^{2( L-i-j-c)H} (1/n)^{C_{0}}  J_{n}, 
 \label{e.cexbd2} 
\end{eqnarray}
where 
\begin{eqnarray*}
C_{0}=4cH+2(i-2q-j-c)H+4(q+j)H+2
= 2(i+j+c)H +2 
\end{eqnarray*}
and
\begin{eqnarray}
J_{n}&=&\sum_{s\leq t_{k}\leq t_{k'}<t} \rho(k-k')^{(2H-1)c}\big(\rho(k-k')^{(2H-1)c}+\rho(k-ns)^{(2H-1)c}\big) 
\notag
\\
&&\qquad\qquad\qquad\qquad\qquad\qquad\qquad\cdot\rho(k-k')^{(2H-2)(i-2q-j-c)}. 
\label{e.jn2}
\end{eqnarray}
Let us separate $J_{n}$ into two parts: 
\begin{eqnarray}
J_{n} &=&J_{n}^{1}+J_{n}^{2}\,,
\label{e.jn12}  
\end{eqnarray}
where
\begin{eqnarray}
\quad J_{n}^{1}=  \sum_{s\leq t_{k}\leq t_{k'}<t} \rho(k-k')^{C_{1}} , 
\qquad 
J_{n}^{2} =   \sum_{s\leq t_{k}\leq t_{k'}<t} \rho(k-k')^{C_{2}} \rho(k-ns)^{C_{3}}. 
\label{e.j1j2}
\end{eqnarray}
and
\begin{eqnarray}
&&C_{1}=2(2H-1)c+(2H-2)(i-2q-j-c), 
\label{e.const1}
\\
&&C_{2}=(2H-1)c+(2H-2)(i-2q-j-c), 
\qquad
C_{3}= (2H-1)c\,. 
\notag
\end{eqnarray}
Substituting \eqref{e.jn12} 
into \eqref{e.cexbd2} we get: 
\begin{eqnarray}
|\ce_{Liqj}^{x}(s,t)|_{L_{2}}^{2} &\lesssim& E_{Liqj}^{1}(s,t)+E_{Liqj}^{2}(s,t), 
\label{e.cel2bd}
\end{eqnarray}
where we denote
\begin{eqnarray}
E_{Liqj}^{ \xi}(s,t)&:=& \sum_{c=0}^{(i-2q-j)\wedge( L-i-j)} (t-s)^{2( L-i-j-c)H} (1/n)^{C_{0}}  J_{n}^{\xi} , 
\qquad
\xi=1,2. 
\label{e.cexx}
\end{eqnarray}

In the next two steps, we derive the estimate of \eqref{e.cexx}.

\noindent\emph{Step 9: The estimate of $E_{Liqj}^{1}(s,t)$  when $(i,q,j)\in A'\cap\{i\geq2\}$.} 
Applying Lemma \ref{lem.rho} (i) to $J_{n}^{1}$ in \eqref{e.j1j2} we obtain the estimate:
 \begin{eqnarray}
J_{n}^{1}&\lesssim&
\begin{cases}
(n(t-s))^{  C_{1}+2 }, &C_{1}> -1. 
\\
n(t-s)\log [n(t-s)], & C_{1}\leq -1. 
\end{cases}
\label{e.j1c1}
\end{eqnarray}

Recall that  $E_{Liqj}^{1}(s,t)$ is defined in \eqref{e.cexx}. 
 In the following we estimate  $E_{Liqj}^{1}(s,t)$ according to the two cases in \eqref{e.j1c1}. 

{\bf Case $C_{1}>-1$:}  
Applying \eqref{e.j1c1} to \eqref{e.cexx} we get: 
\begin{eqnarray}
E_{Liqj}^{1}(s,t)
&\lesssim&  \sum_{c=0}^{(i-2q-j)\wedge( L-i-j)}   (t-s)^{2( L-i-j-c)H}(1/n)^{C_{0}} (n(t-s))^{C_{1}+2} 
\notag
\\
&=&(1/n)^{\al}(t-s)^{\be} ,
\label{e.cex1bd3}
\end{eqnarray}
where
 \begin{eqnarray}
\al&=&C_{0}-(C_{1}+2)=2H(2j+2q)+2(i-2q-j), 
\notag
\\
 \be&=& 2( L-i-j-c)H+C_{1}+2 . 
\label{e.cex1bd1}
\end{eqnarray}
The relation \eqref{e.cex1bd3} shows that the condition \eqref{e.abd1} in Lemma \ref{lem.stnbd} holds for $A(s, t, n):= E_{Liqj}^{1}(s,t)$ and $\ga=0$.  
Take $\ep=\al-(2H+1)$.  Since $i-2q-j>0$   we have $\al\geq2$ and thus $\ep>0$.  Applying Lemma \ref{lem.stnbd} with  $\al$ and $ \be $ given in \eqref{e.cex1bd1} and the $ \ep  $ we just defined gives: 
\begin{eqnarray}
E_{Liqj}^{1}(s,t)
&\leq&K(t-s)^{2(L-1)H+1}(1/n)^{2H+1}.
\label{e.cex1bd2}
\end{eqnarray}
Furthermore, we have the convergence:
\begin{eqnarray}
\label{e.cex1lim}
\lim_{n\to\infty}n^{2H+1}\sup_{(s,t)\in\cs_{2}(\ll0,T\rr)}E_{Liqj}^{1}(s,t) = 0. 
\end{eqnarray}

{\bf Case $C_{1} \leq -1$:}  As in the previous case,   substituting the second inequality of   \eqref{e.j1c1} into \eqref{e.cexx} we   get: 
\begin{eqnarray}
E_{Liqj}^{1}(s,t)
&\lesssim&  (1/n)^{\al}(t-s)^{\be}\log (n(t-s)) . 
\label{e.cex1bd5}
\end{eqnarray}
In this case we have 
 \begin{eqnarray}
\al=C_{0}-1=2H(i+j+c)+1, 
\qquad
 \be= 2( L-i-j-c)H+1 . 
\label{e.cex1bd4}
\end{eqnarray}

The relation \eqref{e.cex1bd5} shows that the condition \eqref{e.abd1} in Lemma \ref{lem.stnbd} holds for $A(s, t, n):= E_{Liqj}^{1}(s,t)$ and $\ga=1$.
 Take $\ep=\al-(2H+1)$.  Since $i\geq2$ the definition of $\al$ in \eqref{e.cex1bd4} shows that  $\al\geq 4H+1$, and therefore we have $\ep>0$. As in the previous case, by applying Lemma \ref{lem.stnbd}  we obtain that the relations   \eqref{e.cex1bd2} and \eqref{e.cex1lim}   hold. 


\noindent\emph{Step 10: The estimate of $E_{Liqj}^{2}(s,t)$ when    $(i,q,j)\in A'\cap\{i\geq2\}$.} As for the estimate of $J_{n}^{1}$ in \eqref{e.j1c1} we can bound $J_{n}^{2}$  in   several different cases. Applying Lemma \ref{lem.rho} (i) to $J_{n}^{2}$ in \eqref{e.j1j2}, first to the sum $\sum_{t_{k'}}$ and then to  the sum $\sum_{t_{k}}$, we obtain:  
\begin{eqnarray}
\qquad J_{n}^{2}&\lesssim&
\begin{cases}
(n(t-s))^{  C_{2}+1 }\cdot (n(t-s))^{  C_{3}+1 },  & \text{when $C_{2}>-1$ and $C_{3}>-1$} . 
\\   
 (n(t-s))^{C_{3}+1} \log [(n(t-s))],  & \text{when $C_{2} \leq-1$ and $C_{3} >-1$} . 
\\   
(n(t-s))^{C_{2}+1} \log [(n(t-s))] , 
   & \text{when $C_{2} >-1$ and $C_{3}\leq -1$} . 
\\   
(\log [n(t-s)])^{2}, & \text{when $C_{2} \leq-1$ and $C_{3} \leq-1$} . 
\end{cases}  
\label{e.j2bd}
\end{eqnarray}

{\bf Case: $C_{2}>-1$ and $C_{3}>-1$.}  
Since $(C_{2}+1) + (C_{3}+1)= C_{1}+2$, the estimate of
  $J_{n}^{2}$ in \eqref{e.j2bd} is the same  as   in \eqref{e.j1c1} for   $J_{n}^{1}$. 
So following the same argument as for $J_{n}^{1}$ we obtain:
\begin{eqnarray}
&&
E_{Liqj}^{2}(s,t)
 \leq K(t-s)^{2(L-1)H+1}(1/n)^{2H+1} 
\label{e.cex2bd2}
\\
&&\qquad\text{and}\qquad\lim_{n\to\infty}n^{2H+1}\sup_{(s,t)\in\cs_{2}(\ll0,T\rr)}E_{Liqj}^{2}(s,t) = 0. 
\label{e.cex2lim}
\end{eqnarray}

{\bf Case: $C_{2} \leq-1$ and $C_{3} >-1$.}   
Substituting \eqref{e.j2bd} into \eqref{e.cexx} we  obtain:
\begin{eqnarray*}
E_{Liqj}^{2}(s,t)
&\lesssim&  \sum_{c=0}^{(i-2q-j)\wedge( L-i-j)} (t-s)^{2( L-i-j-c)H}  (1/n)^{C_{0}} 
\cdot (n(t-s))^{C_{3}+1} \log (n(t-s))
\\
&=& \sum_{c=0}^{(i-2q-j)\wedge( L-i-j)}   (1/n)^{\al}(t-s)^{\be} \log (n(t-s)),
\end{eqnarray*}
where
\begin{eqnarray}
\qquad\qquad
\al&=&C_{0}-(C_{3}+1)= 2(i+j)H+1+c\,,
\label{e.cex2bd1}
\\
\be &=& 2( L-i-j-c)H +(C_{3}+1). 
\notag
\end{eqnarray}

Take $\ep=\al-(2H+1)$.  Since $i\geq 2$, the definition of $\al$ in \eqref{e.cex2bd1} shows that we have $\al\geq 4H+1$,  and thus $\ep>0$. Applying Lemma \ref{lem.stnbd}  we obtain that the relations \eqref{e.cex2bd2} and \eqref{e.cex2lim} hold in this case.

{\bf Case: $C_{2} >-1$ and $C_{3}\leq -1$.}  
As before, substituting \eqref{e.j2bd} into \eqref{e.cexx} gives:  
\begin{eqnarray*}
E_{Liqj}^{2}(s,t)
&\lesssim&  \sum_{c=0}^{(i-2q-j)\wedge( L-i-j)}   (t-s)^{2( L-i-j-c)H} (1/n)^{C_{0}} \cdot 
 (n(t-s))^{C_{2}+1} \log  (n(t-s))
\\
&=&\sum_{c=0}^{(i-2q-j)\wedge( L-i-j)} 
(1/n)^{\al}(t-s)^{\be} \log (n(t-s)), 
\end{eqnarray*}
where
\begin{eqnarray*}
&&\al = C_{0}-(C_{2}+1) = 2H(2j+2q+c)+2(i-2q-j)-c+1
\\
&&\be= 2( L-i-j-c)H + (C_{2}+1). 
\end{eqnarray*}

Take $\ep=\al-(2H+1)$ as before.   Since $i-2q-j\geq c$ and $i-2q-j>0$,  we have $2(i-2q-j)-c\geq i-2q-j\geq 1$. This implies that $\al\geq2$, and therefore $\ep>0$. Applying Lemma \ref{lem.stnbd} we obtain that relations \eqref{e.cex2bd2} and \eqref{e.cex2lim}  holds in this case. 
      

{\bf Case: $C_{2} \leq-1$ and $C_{3} \leq-1$.}  As before, by applying \eqref{e.j2bd} to \eqref{e.cexx} we obtain:  
\begin{eqnarray*}
E_{Liqj}^{2}(s,t)
&\lesssim&  \sum_{c=0}^{(i-2q-j)\wedge( L-i-j)}  (t-s)^{\be} (1/n)^{\al} (\log (n(t-s)))^{\ga}
 , 
\end{eqnarray*}
where
 $
\al = C_{0} $,  $
\be = 2( L-i-j-c)H$,   and
$\ga = 2$. 
The fact   that $i\geq 2$ implies that $\al\geq 4H+2>2H+1$ and thus $\ep:=\al-(2H+1)>0$. Applying Lemma \ref{lem.stnbd} with these values of $\al$, $\be$, $\ga$, $\ep$ we obtain that relations \eqref{e.cex2bd2} and \eqref{e.cex2lim}  hold in this case. 

In summary, we have shown that relations \eqref{e.cex2bd2}-\eqref{e.cex2lim} hold for all $(i,q,j)\in A'\cap\{i\geq 2\}$. 

\noindent\emph{Step 11: The estimate of $\ce_{L}^{x,2}(s,t)$.} 
Substituting the relation   \eqref{e.cel2bd} into \eqref{e.cexa} we obtain the following relation:
\begin{eqnarray}
|\ce_{L}^{x,2}(s,t)|_{L_{2}}^{2} &\lesssim& \sum_{(i,q,j)\in A'\cap \{i\geq 2\}} \lp
E_{Liqj}^{1}(s,t)+E_{Liqj}^{2}(s,t)
\rp. 
\label{e.cexbbd1}
\end{eqnarray}
Applying   the relations \eqref{e.cex1bd2}, \eqref{e.cex2bd2}, \eqref{e.cex1lim},  \eqref{e.cex2lim}  to \eqref{e.cexbbd1} we obtain:
\begin{eqnarray}
|\ce_{L}^{x,2}(s,t)|_{L_{2}}^{2} &\leq &K (1/n)^{2H+1}(t-s)^{2(L-1)H+1}  
\label{e.cexbbd}
\end{eqnarray}
for all $(s,t,n)\in\cs_{2}(\ll0,T\rr)\times \NN$ and  
\begin{eqnarray}
\lim_{n\to\infty} n^{2H+1} \sup_{(s,t)\in\cs_{2}(\ll0,T\rr)}|\ce_{L}^{x,2}(s,t)|_{L_{2}}^{2}  =0.
\label{e.cexbl} 
\end{eqnarray}

\noindent\emph{Step 12: Estimate of $\ce_{Liqj}^{x}(s,t)$ when    $(i,q,j)\in A'\cap\{i=1\}$.} 
Recall that $\ce_{L}^{x,3}(s,t)$ is defined in \eqref{e.cexc}.  
 Note that when $i=1$ and $i-2q-j>0$ we   have $j=0$, $q=0$. This implies that  
\begin{eqnarray}
 \ce_{L}^{x,3}(s,t) =  \ce_{L100}^{x}(s,t)= \sum_{s\leq t_{k}<t}  \int_{t_{k}}^{t_{k+1}}        
 \delta^{\diamond} \lp x^{ L-1}_{st_{k}} 
\mathbf{1}_{[t_{k},u]} \rp 
   du. 
\label{e.cexc2}
\end{eqnarray}
 In this case the relation  \eqref{e.jn2} becomes:
 \begin{eqnarray*}
J_{n}&=& \begin{cases}
\sum\limits_{s\leq t_{k}\leq t_{k'}<t} 
\rho(k-k')^{2H-1}(\rho(k-k')^{2H-1}+\rho(k-ns)^{2H-1} )
&\text{when }c=1.
\\
\sum\limits_{s\leq t_{k}\leq t_{k'}<t} 
\rho(k-k')^{2H-2}&\text{when }c=0.
\end{cases}
\end{eqnarray*}
Applying Lemma \ref{lem.rho} (i) we obtain: 
 \begin{eqnarray}
|J_{n}|&\lesssim&  \begin{cases}
n+n^{4H}
&\text{when }c=1.
\\
n&\text{when }c=0.
\end{cases}
\label{e.jnbd1}
\end{eqnarray}
Substituting \eqref{e.jnbd1} into \eqref{e.cexbd2} and taking into account that $C_{0}=4H+2$ when $c=1$ and $C_{0}=2H+2$ when $c=0$,   we  obtain    
  the estimate: 
\begin{eqnarray}
|\ce_{L}^{x,3}(s,t)|_{L_{2}}^{2} &\leq &K (1/n)^{2H+1}(t-s)^{2(L-1)H+1}  . 
\label{e.cexcbd}
\end{eqnarray} 

 
\noindent\emph{Step 13: Conclusion.}  
 Reporting the estimates \eqref{e.cexabd}, \eqref{e.cexbbd}, \eqref{e.cexcbd} to \eqref{e.cexd} we obtain that the upper-bound estimate \eqref{e.cexbdl2} holds for $|\ce_{L}^{x}(s,t) |_{L_{2}}$. 
Since $\{\ce_{L}^{x}(s,t), n\in\NN\}$ stays in the first $L$ Wiener chaos,   by hypercontractivity property the relation   \eqref{e.cexbdl2}   holds for $|\ce_{L}^{x}(s,t) |_{L_{p}}$ for any $p\geq 1$. This concludes the estimate    \eqref{e.cexbdl2}. 

Recalling the definition of $Z^{(n)}$ in \eqref{e.zn} it is clear that 
$n^{H+1/2} \ce_{L}^{x,3}(s,t) = Z^{(n),L}_{st}$ for $ (s,t)\in\cs_{2}(\ll0,T\rr)$. 
Applying this relation and the two convergences in \eqref{e.cexal} and \eqref{e.cexbl} to the right-hand side of \eqref{e.cexd} we obtain the  relation \eqref{e.cexbd5}.  
\end{proof}

\begin{remark}\label{remark.int}
Suggested by the proof of Lemma \ref{lem.cex}, we conjecture that the compensated Riemann-Stieltjes sum can be identified by a Skorohod-type Riemann sum, namely, 
\begin{eqnarray}
\lim_{n\to\infty}\sum_{0\leq t_{k}<t} \sum_{i=1}^{\ell}f^{(i-1)}(x_{t_{k}}) x^{i}_{t_{k}t_{k+1}} =\lim_{n\to\infty} \sum_{0\leq t_{k}<t} \delta^{\diamond} ( f(x_{t_{k}})\mathbf{1}_{[t_{k},t_{k+1}]} ),
\label{e.ints}
\end{eqnarray}
where $x$ a standard fBm, $f$ is a smooth function, and  $\ell$ is such that $\ell (H+1)>1$.    
  Note that   the left-hand side of \eqref{e.ints} is equal to the rough integral $\int_{0}^{t} f(x_{s})dx_{s}$ almost surely. This implies that  relation \eqref{e.ints} would suggest an alternative way to define   rough integrals. We will explore this problem in a future research.
\end{remark}

With Lemma \ref{lem.cex} in hand, in the following we consider the convergence of $\ce^{n,z}_{\ell}(s,t)$. 
\begin{theorem}\label{thm.taylor}
Let $\ell\in\NN$   be the least integer  such that $\ell H+1/2>1$.    
Let $\bfz=(z,z', \dots,z^{(\ell-1)})$ be   continuous processes on $[0,T]$  and   $r^{(i)}$, $i=0,\dots, \ell-1$ be the remainders of $\bfz$ defined in \eqref{e.r}.  
Let $\ce^{z, n}_{\ell} (s,t)$, $(s,t)\in\cs_{2}([0,T]) $ be defined in \eqref{e.ce},  and denote the Wiener integral:    
\begin{eqnarray}
\ce^{z}(s,t) &:=& c_{H}^{1/2}T^{H+1/2}\int_{s}^{t}z_{u}dW_{u}\,, 
\qquad
(s,t)\in\cs_{2}([0,T]), 
\label{e.cezl}
\end{eqnarray}
where $W$ is a   Brownian motion independent of $x$ and $c_{H}$ is the constant defined in \eqref{e.rho}.

\noindent (i) Suppose   that $\bfz$ is controlled by $(x,\ell,H-\ep)$ in $L_{p}$ for any  $\ep>0$ and $p\geq 1$.  
Then for any   $p\geq1$ there exists a constant $K>0$ such that: 
\begin{eqnarray}
|\ce^{z, n}_{\ell}(s,t)|_{L_{p}}&\leq &K (1/n)^{H+1/2}(t-s)^{1/2} 
\label{e.czbd}
\end{eqnarray}
for all  $(s,t)\in\cs_{2}(\ll0,T\rr)$ and $n\in\NN$.

\noindent (ii)  
 Suppose that $\bfz$ is controlled by $(x,\ell,H)$ almost surely. Then we have the convergence $n^{H+1/2}\ce^{z,n}_{\ell} \xrightarrow{stable \,\,f.d.d.} \ce^{z}$ as $n\to\infty$. That is, the f.d.d. of the process  $n^{H+1/2}\ce^{z,n}_{\ell}(s,t)$, $(s,t)\in\cs_{2}([0,T])$ converges $\cf^{x}$-stably to that of $\ce^{z}(s,t)$,  $(s,t)\in\cs_{2}([0,T])$ as $n\to\infty$. 
 
\end{theorem}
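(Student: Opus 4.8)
The plan is to reduce the general compensated sum $\ce^{z,n}_{\ell}(s,t)$ to the monomial-weighted sums $\ce^{x}_{L}(s,t)$ already analysed in Lemma~\ref{lem.cex}, following the reduction scheme of \cite{LT20}, and then to feed Lemma~\ref{lem.cex}, the Skorohod-sum limit theorem (Theorem~\ref{prop.du}) and the limit lemma (Lemma~\ref{lem.limlim}) into the conclusion. First I would fix $(s,t)\in\cs_{2}([0,T])$ and expand each weight $z^{(i-1)}_{t_{k}}$ appearing in \eqref{e.ce}--\eqref{e.hi} around the grid point $\la(s)$ via the controlled expansion \eqref{e.r} (legitimate since every $t_{k}$ in the sum satisfies $t_{k}\geq\la(s)$), namely $z^{(i-1)}_{t_{k}}=\sum_{p=0}^{\ell-i}z^{(i-1+p)}_{\la(s)}x^{p}_{\la(s)t_{k}}+r^{(i-1)}_{\la(s)t_{k}}$. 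Collecting the coefficient of each $z^{(L-1)}_{\la(s)}$, $1\leq L\leq\ell$, one recognises exactly $\sum_{i=1}^{L}\cj^{t}_{s}(x^{L-i},h^{i})=\ce^{x}_{L}(s,t)$, so that
\[
\ce^{z,n}_{\ell}(s,t)=\sum_{L=1}^{\ell}z^{(L-1)}_{\la(s)}\,\ce^{x}_{L}(s,t)+R^{z,n}(s,t),\qquad
R^{z,n}(s,t):=\sum_{i=1}^{\ell}\sum_{s\leq t_{k}<t}r^{(i-1)}_{\la(s)t_{k}}\,h^{i}_{t_{k}t_{k+1}}.
\]

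The heart of the argument, and the step I expect to cost the most, is the estimate of the remainder $R^{z,n}$. A term-by-term bound based on $|h^{i}_{t_{k}t_{k+1}}|_{L_{p}}\lesssim n^{-iH-1}$, $|r^{(i-1)}_{\la(s)t_{k}}|_{L_{p}}\lesssim(t-s)^{(\ell-i+1)(H-\ep)}$ and the count $\sim n(t-s)$ only yields order $n^{-iH}$, which for $i$ close to $\ell$ is weaker than the target $n^{-H-1/2}$; the missing $n^{-1/2}$ must come from a cancellation, because $\sum_{k}r^{(i-1)}_{\la(s)t_{k}}h^{i}_{t_{k}t_{k+1}}$ is itself of Skorohod type. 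To extract it I would Hermite-decompose $h^{i}_{t_{k}t_{k+1}}$ as in the proof of Lemma~\ref{lem.cex}, move the $\cf^{x}$-measurable factor $r^{(i-1)}_{\la(s)t_{k}}$ inside the divergence operators using $\delta^{\diamond,m}(F\,u)=F\,\delta^{\diamond,m}(u)+(\text{lower-order contraction terms})$, and then bound the resulting Skorohod integrals and contractions by the isometry-type estimates of Lemmas~\ref{lem.abcd}, \ref{lem.rho} and \ref{lem.stnbd}, using the algebraic relation \eqref{e.dr} for the $r^{(i)}$'s to handle the cross terms. The output should be, exactly as for $\ce^{x}_{L}$ in Lemma~\ref{lem.cex}, a bound $|R^{z,n}(s,t)|_{L_{p}}\leq Kn^{-H-1/2}(t-s)^{1/2}$ together with $\lim_{n\to\infty}n^{H+1/2}\sup_{(s,t)\in\cs_{2}(\ll0,T\rr)}|R^{z,n}(s,t)|_{L_{p}}=0$; under the weaker almost-sure control hypothesis the same is expected with ``$L_{p}$'' replaced by convergence in probability (if necessary via approximation of $\bfz$ by processes controlled in $L_{p}$).

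For part (i) I would then combine the decomposition with Lemma~\ref{lem.cex}(i): by Cauchy--Schwarz $|z^{(L-1)}_{\la(s)}\ce^{x}_{L}(s,t)|_{L_{p}}\le|z^{(L-1)}_{\la(s)}|_{L_{2p}}|\ce^{x}_{L}(s,t)|_{L_{2p}}\lesssim n^{-H-1/2}(t-s)^{1/2+(L-1)H}\lesssim n^{-H-1/2}(t-s)^{1/2}$, where the uniform $L_{2p}$-bound for $z^{(L-1)}_{\la(s)}$ follows by iterating \eqref{e.r} and the $L_{p}$-control bounds; summing over $L$ and adding the remainder estimate gives \eqref{e.czbd}.

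For part (ii) I would fix the partition $\pi$ and introduce a refinement $\pi_{M}$ of mesh $\delta_{M}\to0$. Since $\ce^{z,n}_{\ell}$ is additive over subintervals, $\ce^{z,n}_{\ell}(s_{j},s_{j+1})$ splits as a sum over $\pi_{M}$-cells $(\sigma,\sigma')$, to each of which Step~1 applies. Setting $V_{n,M}:=\sum_{\text{cells}}z_{\la(\sigma)}\,n^{H+1/2}\ce^{x}_{1}(\sigma,\sigma')$ and letting $\wt V_{n,M}$ collect the $L\geq2$ monomial contributions together with the remainders, Lemma~\ref{lem.cex}(ii) gives $n^{H+1/2}\ce^{x}_{1}(\sigma,\sigma')\to Z^{(n),1}_{\sigma\sigma'}$ in $L_{p}$, Theorem~\ref{prop.du} gives the stable convergence of $Z^{(n),1}$ to $c_{H}^{1/2}T^{H+1/2}\int dW$, and the $\cf^{x}$-measurability and continuity of $z$ (so $z_{\la(\sigma)}\to z_{\sigma}$) yield $\lim_{n}V_{n,M}=\sum_{\text{cells}}z_{\sigma}\,c_{H}^{1/2}T^{H+1/2}(W_{\sigma'}-W_{\sigma})$ $\cf^{x}$-stably, jointly with $x_{\pi}$; as $M\to\infty$ this left-point Riemann sum converges in $L^{2}$ to $\ce^{z}(s_{j},s_{j+1})$. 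For the error term: for each fixed $M$, as $n\to\infty$ the $L\geq2$ parts converge in law to $\sum_{\text{cells}}z^{(L-1)}_{\sigma}Z^{L}_{\sigma\sigma'}$, whose $L^{2}$-norm is $O(\delta_{M}^{(L-1)H})$ by the (conditional) orthogonality of the $W$-increments in $Z^{L}$, while the remainder part tends to $0$ in probability by Step~2; hence $\limsup_{M}\limsup_{n}P(|\wt V^{i}_{n,M}|\ge\ep)\to0$ as $\ep\to0$, and Lemma~\ref{lem.limlim} delivers the claimed stable convergence. The only delicate point besides Step~2 is the bookkeeping of the $\la$-roundings and the passage to the continuous simplex, which I expect to be routine.
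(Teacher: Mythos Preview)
Your overall architecture --- the decomposition
\[
\ce^{z,n}_{\ell}(s,t)=\sum_{L=1}^{\ell}z^{(L-1)}_{\la(s)}\,\ce^{x}_{L}(s,t)+R^{z,n}(s,t),
\]
the use of Lemma~\ref{lem.cex}(i) for part~(i), and the refinement/$V_{n,M}$--$\wt V_{n,M}$ scheme feeding into Theorem~\ref{prop.du} and Lemma~\ref{lem.limlim} for part~(ii) --- is exactly the paper's. The paper also handles the almost-sure hypothesis in (ii) by a localization reduction to the $L_p$-controlled case, which is what you gesture at.

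The genuine gap is your treatment of the remainder $R^{z,n}$. Your plan is to Hermite-decompose $h^{i}$ and then move the weight $r^{(i-1)}_{\la(s)t_{k}}$ inside the divergence via $\delta^{\diamond,m}(Fu)=F\,\delta^{\diamond,m}(u)+\cdots$. But that identity requires $F\in\DD^{m,2}$, and nothing in the hypotheses says the remainders $r^{(i-1)}$ (or $z$ itself) are Malliavin differentiable; only rough-path control is assumed. Even granting smoothness, the contraction terms would involve $D^{j}r^{(i-1)}$, for which you have no bounds, so the isometry estimates of Lemmas~\ref{lem.abcd}--\ref{lem.rho} do not directly apply. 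In short, the Malliavin machinery that powers Lemma~\ref{lem.cex} works there because the weights are explicit polynomials $x^{L-i}$; it does not transfer to generic controlled weights.

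The paper extracts the missing $n^{-1/2}$ by a completely different and much shorter mechanism: a discrete sewing argument. One computes $\delta R^{z,n}_{sut}$ for $(s,u,t)\in\cs_{3}(\ll0,T\rr)$ and, using precisely the algebraic relation \eqref{e.dr} that you mention (but at this earlier, structural level rather than after a Hermite expansion), obtains
\[
\delta R^{z,n}_{sut}=\sum_{j=1}^{\ell} r^{(j-1)}_{su}\,\ce^{x}_{j}(u,t).
\]
Now Cauchy--Schwarz together with Lemma~\ref{lem.cex}(i) gives $|\delta R^{z,n}_{sut}|_{L_p}\lesssim (1/n)^{H+1/2}(t-s)^{\ell(H-\ep)+1/2}$, and since $\ell H+\tfrac12>1$ the discrete sewing lemma yields $|R^{z,n}_{st}|_{L_p}\lesssim (1/n)^{H+1/2}(t-s)^{\ell(H-\ep)+1/2}$. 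This uses only the $L_p$-control of $r^{(j-1)}$ and the already-proved monomial bound; no Malliavin regularity of $z$ is needed. With this remainder estimate in hand, your Steps for (i) and (ii) go through unchanged --- in particular, summing $|R^{z,n}_{u'_{j}u'_{j+1}}|_{L_p}$ over the $\pi_M$-cells and sending $M\to\infty$ gives the vanishing you need in \eqref{e.limlim}, because the exponent $\ell(H-\ep)+\tfrac12$ exceeds $1$.
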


\begin{proof}
 By localization 
 (cf. \cite[Lemma 3.4.5]{JP})     
it suffices to prove Theorem \ref{thm.taylor} (ii) under the condition that $\bfz$ is   controlled by $(x,\ell,H-\ep)$ in $L_{p}$ for any  $\ep>0$ and $p\geq 1$.  In the following  we always assume that this condition holds.  

\noindent\emph{Step 1: Estimate of remainders.}  
Define  the following remainder process:     
\begin{eqnarray*}
\crr^{\ell}_{st}  &:=& \sum_{i=1}^{\ell} \cj_{s}^{t} (r^{(i-1)}, h^{i}), 
\qquad
(s,t)\in\cs_{2}(\ll0,T\rr). 
\end{eqnarray*}
In the following we   derive an estimate of $\crr^{\ell}_{st}  $.  
We start by  computing $\delta\crr^{\ell}_{sut} $. 
Recall that the operator $\delta$ is defined in \eqref{e.delta}. A direct    computation gives:  
\begin{eqnarray}
\delta\crr^{\ell}_{sut}  
&=&\sum_{i=1}^{\ell} \sum_{u\leq t_{k}<t} (\delta r^{(i-1)}_{sut_{k}}+r^{(i-1)}_{su})h^{i}_{t_{k}t_{k+1}}, \qquad
(s,u,t)\in\cs_{3}(\ll0,T\rr) . 
\label{e.drl}
\end{eqnarray}
Substituting the relation  \eqref{e.dr} into \eqref{e.drl} we have:
\begin{eqnarray*}
\delta\crr^{\ell}_{sut} =\sum_{i=1}^{\ell} \sum_{u\leq t_{k}<t} 
 \sum_{j=i}^{\ell} r^{(j-1)}_{su} 
x^{j-i}_{ut_{k}}
h^{i}_{t_{k}t_{k+1}} 
= \sum_{i=1}^{\ell}  \sum_{j=i}^{\ell}
 r^{(j-1)}_{su} 
\cj_{u}^{t}(x^{j-i},h^{i})  
.
\end{eqnarray*}
Invoking the definition of $ \ce_{j}^{x}(u,t)$ in \eqref{e.cex} we obtain:  
\begin{eqnarray}
\delta\crr^{\ell}_{sut} &=&  \sum_{j=1}^{\ell} 
 r^{(j-1)}_{su} \ce_{j}^{x}(u,t) .
 \label{e.drl2}
\end{eqnarray}
Applying Lemma \ref{lem.cex} (i) to $ \ce_{j}^{x}(u,t) $ in \eqref{e.drl2} and taking into account the estimate $ |r^{(j-1)}_{su}|_{L_{2p}} \leq K  (u-s)^{(\ell - j+1)H}$ we obtain:
\begin{eqnarray}
|\delta\crr^{\ell}_{sut} |_{L_{p}}&\lesssim&  \sum_{j=1}^{\ell} 
 |r^{(j-1)}_{su}|_{L_{2p}}\cdot| \ce_{j}^{x}(u,t)|_{L_{2p}}
\notag \\
 &\lesssim& (u-s)^{(\ell - j+1)(H-\ep)}  (1/n)^{H+1/2}(t-u)^{(j-1)H+1/2}
\notag \\
 &\leq&  (1/n)^{H+1/2}(t-s)^{\ell (H-\ep)+1/2}. 
 \label{e.drl1}
\end{eqnarray}
Since $\ell H+1/2>1$, according to  the   discrete sewing lemma (see e.g. \cite[Lemma 2.7]{liu2019first}) the relation \eqref{e.drl1} implies that  we have the estimate: 
\begin{eqnarray}
| \crr^{\ell}_{st} |_{L_{p}}&\lesssim&  (1/n)^{H+1/2}(t-s)^{\ell  (H-\ep)+1/2}. 
\label{e.rlbd}
\end{eqnarray}

\noindent\emph{Step 2: Estimate of  $\ce^{z,n}_{\ell}(s,t)$.}
By Definition \ref{def.control} we have
 $z^{(i-1)}_{t} = \sum_{j=i}^{\ell} z^{(j-1)}_{s}x^{j-i}_{st}+r^{(i-1)}_{st}$ 
for  $ i=1,\dots,\ell$.
Substituting this relation into \eqref{e.ce}  we obtain: 
\begin{eqnarray}
\ce^{z,n}_{\ell}(s,t) &=&  \sum_{i=1}^{\ell}\sum_{j=i}^{\ell}z^{(j-1)}_{s}\cj_{s}^{t}(x^{j-i}, h^{i}) + \sum_{i=1}^{\ell}\cj_{s}^{t}(r^{(i-1)}, h^{i})
\notag
\end{eqnarray}
Exchanging the order of summations in $i$ and $j$ and invoking the definition of $\crr^{\ell}_{st} $ we obtain:   
\begin{eqnarray}
\ce^{z,n}_{\ell}(s,t) =   \sum_{j=1}^{\ell}z^{(j-1)}_{s}\sum_{i=1}^{j}\cj_{s}^{t}(x^{j-i}, h^{i}) +  \crr^{\ell}_{st} 
= \sum_{j=1}^{\ell}z^{(j-1)}_{s} \ce^{x}_{j}(s,t)+ \crr^{\ell}_{st}  . 
\label{e.cez2}
\end{eqnarray}
Taking $L_{p}$ norm in both sides of \eqref{e.cez2} and 
then applying the estimates  \eqref{e.cexbdl2} and \eqref{e.rlbd} to the right-hand side   we obtain: 
\begin{eqnarray}
|\ce^{z,n}_{\ell}(s,t)|_{L_{p}}  \lesssim  \sum_{j=1}^{\ell} (1/n)^{H+1/2}(t-s)^{1/2+(L-1)H} 
\notag
\\
\qquad+(1/n)^{H+1/2}(t-s)^{\ell  (H-\ep)+1/2} . 
\label{e.cezbd1}
\end{eqnarray}
It is readily checked   that the right-hand side of \eqref{e.cezbd1} is bounded by $K(1/n)^{H+1/2}(t-s)^{ 1/2} $ for some constant $K $. 
This completes  the proof of Theorem \ref{thm.taylor} (i). 


\noindent\emph{Step 3: Convergence of   $\ce^{z,n}_{\ell} $.}
Recall that $\ll0,T\rr$ stands for the uniform parition of $[0,T]$: $0=t_{0}<\cdots<t_{n}=T$. 
Take  another partition of $[0,T]$:   
  $\pi=\{0=s_{0} <\cdots < s_{N}=T\}$. For each subinterval $[s_{l}, s_{l+1}]$, $l=0,\dots, N-1$ take    $M\in\NN$ and let $s_{l}=u_{0}^{l}<u_{1}^{l}<\cdots < u_{M}^{l}=s_{l+1}$ be the uniform partition of $[s_{l},s_{l+1}]$. For convenience  we will  write $u^{l}_{j}=u_{j}$ in the sequel. 
  
  Let $u_{j}'\in\ll0,T\rr$, $j=0,\dots,M$  be such that $
\{t_{k}: u_{j}'\leq t_{k}<u_{j+1}'\}=\{t_{k}: u_{j} \leq t_{k}<u_{j+1} \}$. 
 By the definition of $\ce^{z,n}_{\ell}(s_{l},s_{l+1}) $ in \eqref{e.ce} it is easy to see that   
\begin{eqnarray}
\ce^{z,n}_{\ell}(s_{l},s_{l+1}) &=&\sum_{j=0}^{M-1}\ce^{z,n}_{\ell}(u_{j} ,u_{j+1} ) =\sum_{j=0}^{M-1}\ce^{z,n}_{\ell}(u_{j}',u_{j+1}') .
\label{e.cezsj}
\end{eqnarray}
Substituting  the relation  \eqref{e.cez2} into the right-hand side of \eqref{e.cezsj} and then multiplying both sides of \eqref{e.cezsj} by $n^{H+1/2}$ we have 
\begin{eqnarray}
n^{H+1/2}\ce^{z,n}_{\ell}(s_{l},s_{l+1}) &=&
n^{H+1/2}\sum_{i=1}^{\ell}\sum_{j=0}^{M-1}z^{(i-1)}_{u'_{j}} \ce^{x}_{i}(u'_{j},u'_{j+1})+ n^{H+1/2}\sum_{j=0}^{M-1}\crr^{\ell}_{u'_{j}  u'_{j+1}}
\notag
\\
&=& V_{n,M}^{l}+ \wt{V}_{n,M}^{l},  
\label{e.cezd}
\end{eqnarray}
where
\begin{eqnarray}
V_{n,M}^{l}&=& n^{H+1/2}\sum_{j=0}^{M-1}z_{u'_{j} } \ce^{x}_{1}(u'_{j} ,u'_{j+1})
\label{e.vnm}
\\
 \wt{V}_{n,M}^{l}&=&n^{H+1/2}\sum_{i=2}^{\ell}\sum_{j=0}^{M-1}z^{(i-1)}_{u'_{j}} \ce^{x}_{i}(u'_{j},u'_{j+1})+ n^{H+1/2}\sum_{j=0}^{M-1}\crr^{\ell}_{u'_{j} u'_{j+1}}. 
 \label{e.vt}
\end{eqnarray}
In the following we consider the  convergence of the components in \eqref{e.vnm}-\eqref{e.vt}. The convergence of $n^{H+1/2}\ce^{z,n}_{\ell} $ will then follow by applying Lemma \ref{lem.limlim}. 

Recall the estimate \eqref{e.rlbd} for $\crr^{\ell}_{st}$. So we have 
\begin{eqnarray}
 \qquad\qquad
 \big|\sum_{j=0}^{M-1}\crr^{\ell}_{u_{j}' u_{j+1}'} \big|_{L_{p}}
  \leq  \sum_{j=0}^{M-1}\big|\crr^{\ell}_{u_{j}' u_{j+1}'} \big|_{L_{p}}
 \lesssim \sum_{j=0}^{M-1}(1/n)^{H+1/2}(u_{j+1}'-u_{j}')^{\ell H+1/2}. 
\label{e.rlbd2}
\end{eqnarray}
Since $\lim_{n\to\infty}(u_{j}',u_{j+1}')= (u_{j},u_{j+1})$,  
sending $n\to\infty$ and then $M\to\infty$ in \eqref{e.rlbd2} gives:   
\begin{eqnarray}
\qquad
\lim_{M\to\infty}\limsup_{n\to\infty} n^{H+1/2}\big|\sum_{j=0}^{M-1}\crr^{\ell}_{u_{j}' u_{j+1}'} \big|_{L_{p}}
&\lesssim&
\lim_{M\to\infty} \sum_{j=0}^{M-1} (u_{j+1}-u_{j})^{\ell H+1/2}=0, 
\label{e.rlbd4}
\end{eqnarray}
where the last relation in \eqref{e.rlbd4} holds since the exponent $\ell H+1/2 $ is greater than  $ 1$. 

We turn to the convergence of $n^{H+1/2}\sum_{j=0}^{M-1}z^{(i-1)}_{u_{j}'} \ce^{x}_{i}(u_{j}',u_{j+1}') $ in \eqref{e.vt}.  By applying Theorem \ref{lem.cex} (ii)   we have the convergence: 
\begin{eqnarray}
\lim_{n\to\infty}\big|n^{H+1/2}\sum_{j=0}^{M-1}z^{(i-1)}_{u_{j}'} \ce^{x}_{i}(u_{j}',u_{j+1}') -  \sum_{j=0}^{M-1}z^{(i-1)}_{u_{j}'} Z^{(n),i}_{u_{j}'u_{j+1}'}\big|_{L_{p}} =0.
\label{e.cexz1}
\end{eqnarray}
  Note that  by   Definition \ref{def.control}  we have $|z^{(i-1)}_{u_{j}'}-z^{(i-1)}_{u_{j}}|_{L_{p}}\to 0$. 
  On the other hand, according to relation  \eqref{e.zn}  we have 
$ Z^{(n),i}_{u_{j}'u_{j+1}'}=  Z^{(n),i}_{u_{j}u_{j+1}} $.
 It follows that  we can replace  $z^{(i-1)}_{u_{j}'}Z^{(n),i}_{u_{j}'u_{j+1}'}$ by $z^{(i-1)}_{u_{j}}  Z^{(n),i}_{u_{j}u_{j+1}} $ in \eqref{e.cexz1}. Namely, we have  
\begin{eqnarray}
\lim_{n\to\infty}\big|n^{H+1/2}\sum_{j=0}^{M-1}z^{(i-1)}_{u_{j}'} \ce^{x}_{i}(u_{j}',u_{j+1}')   -  \sum_{j=0}^{M-1}z^{(i-1)}_{u_{j}} Z^{(n),i}_{u_{j}u_{j+1}} \big|_{L_{p}} =0. 
\label{e.zcex} 
\end{eqnarray}
Applying  Theorem \ref{prop.du} to $\sum_{j=0}^{M-1}z^{(i-1)}_{u_{j}} Z^{(n),i}_{u_{j}u_{j+1}}$ in \eqref{e.zcex}  we obtain the $\cf^{x}$-stable convergence:   
\begin{eqnarray}&&\Big( n^{H+1/2}\sum_{j=0}^{M-1}z^{(i-1)}_{u_{j}'} \ce^{x}_{i}(u_{j}',u_{j+1}') , \,l=0,\dots,N-1
 \Big)
\notag
\\
&&\qquad\qquad \to 
 \Big(
   \sum_{j=0}^{M-1}z^{(i-1)}_{u_{j}}  Z^{i}_{u_{j}u_{j+1}} , \,l=0,\dots,N-1
 \Big) 
 \qquad\text{as $n\to\infty$}. 
\label{e.zcexconv}
\end{eqnarray}

The It\^o isometry implies that
\begin{eqnarray}
|\sum_{j=0}^{M-1}z^{(i-1)}_{u_{j}}  Z^{i}_{u_{j}u_{j+1}}|_{L_{2}}^{2}   &\lesssim&  \mE \sum_{j=0}^{M-1}|z^{(i-1)}_{u_{j}}|^{2} \int_{u_{j}}^{u_{j+1}}(x^{i-1}_{u_{j}u})^{2}du
\notag\\
 &\lesssim&  \sum_{j=0}^{M-1} |u_{j+1}-u_{j}|^{2(i-1)H+1} . 
\label{e.zzl2}
\end{eqnarray}
Since the exponenet $2(i-1)H+1>1$ for $i\geq 2$, it follows from \eqref{e.zzl2}    that we have: 
\begin{eqnarray}
\sum_{j=0}^{M-1}z^{(i-1)}_{u_{j}}  Z^{i}_{u_{j}u_{j+1}} \to 0 
\label{e.zzconv}
\end{eqnarray}
in probability as $M\to\infty$. In summary of \eqref{e.zcexconv} and \eqref{e.zzconv}  we have shown that   
\begin{eqnarray}
n^{H+1/2}\sum_{j=0}^{M-1}z^{(i-1)}_{u_{j}'} \ce^{x}_{i}(u_{j}',u_{j+1}') \to  0
\label{e.zcexconv1}
\end{eqnarray}
in distribution as $n\to\infty$ and then $M\to\infty$ for  all  $i=2,\dots,\ell$. 

 When $i=1$ equation \eqref{e.zist} becomes $Z^{1}_{u_{j}u_{j+1}} =c_{H}^{1/2}T^{H+1/2} ( W_{u_{j+1}}-W_{u_{j}}) $, and therefore 
\begin{eqnarray*}
 \sum_{j=0}^{M-1}z^{(i-1)}_{u_{j}}  Z^{i}_{u_{j}u_{j+1}} = c_{H}^{1/2}T^{H+1/2}\sum_{j=0}^{M-1}z_{u_{j}}  (W_{u_{j+1}}-W_{u_{j}})
.
\end{eqnarray*}
Sending $M\to\infty$  we obtain the convergence in $L_{2}$ to the Wiener integral: 
\begin{eqnarray}
 \sum_{j=0}^{M-1}z^{(i-1)}_{u_{j}}  Z^{i}_{u_{j}u_{j+1}}  \to 
    c_{H}^{1/2}T^{H+1/2}\int_{s_{l}}^{s_{l+1}} z_{u}dW_{u} .  
 \label{e.zzconv1}
\end{eqnarray} 

\noindent\emph{Step 4: Conclusion.}
Let  
$
V_{n,M} = (V_{n,M}^{l},    x_{s_{l}s_{l+1}}^{1}, l=0,\dots,N-1 )$, 
 where  $V_{n,M}^{l}$  are defined in \eqref{e.vnm}. 
The convergence in \eqref{e.zcexconv}  and \eqref{e.zzconv1}    shows that we have
\begin{eqnarray}
V_{n,M}
\to 
\Big(
c_{H}^{1/2}T^{H+1/2}\int_{s_{l}}^{s_{l+1}} z_{u}dW_{u},\,  \, x_{s_{l}s_{l+1}}^{1}, \,l=0,\dots,N-1
\Big) 
\label{e.vconv}
\end{eqnarray}
in distribution as $n\to\infty$ and then $M\to\infty$. 
On the other hand, 
by applying the convergences \eqref{e.rlbd4} and \eqref{e.zcexconv1} to   \eqref{e.vt} we obtain:  
\begin{eqnarray}
\lim_{M\to\infty} \limsup_{n\to\infty}P(|\wt{V}^{l}_{n,M}|\geq \ep) =0.
\label{e.vtconv}
\end{eqnarray}

Let 
$\wt{V}_{n,M} = (\wt{V}_{n,M}^{0}, \dots, \wt{V}_{n,M}^{N-1}, 0,\dots,0 )$ and $V_{n,M}$ be defined as above. Then it follows from \eqref{e.cezd} that   $V_{n,M}+\wt{V}_{n,M}$ equals the f.d.d. of $(\ce_{\ell}^{z,n}, x^{1})$, namely, 
\begin{eqnarray}
V_{n,M}+\wt{V}_{n,M} = (n^{H+1/2}\ce^{z,n}_{\ell}(s_{l},s_{l+1}),\,  \, x_{s_{l}s_{l+1}}^{1}, \,l=0,\dots,N-1 ). 
\notag
\end{eqnarray}
 According to 
\eqref{e.vconv}  and \eqref{e.vtconv}  the conditions \eqref{e.vnmconv} and \eqref{e.limlim} in Lemma \ref{lem.limlim} are satisfied.  It follows from  Lemma \ref{lem.limlim} that the f.d.d. of $(n^{H+1/2}\ce^{z, n}_{\ell},x^{1})$ converges in distribution to that of $(\ce^{z}, x^{1})$ as $n\to\infty$.  This proves Theorem \ref{thm.taylor} (ii).  
\end{proof}


We end the section by stating convergence of Riemann sum for the regular integral $\int z_{u}du$, where $z$ is a continuous path. The result is a consequence of Theorem \ref{thm.taylor} (i).  Recall that $x$ is a fBm with Hurst parameter $H<1/2$ and $\ell$ is the least integer such that $\ell H>1/2$. 

\begin{thm}\label{thm.integral}
Let $\bfz=(z,z',\dots,z^{(\ell)})$ be a continuous process controlled by $(x,\ell+1,H)$ almost surely. Let
\begin{eqnarray}
U^{n}_{st}= n^{H+1/2}\big(\int_{s}^{t} z_{u}d {u}-\sum_{s\leq t_{k}<t}z_{t_{k}}\cdot {T}/{n} \big),\qquad (s,t)\in\cs_{2}([0,T]). 
\notag
\end{eqnarray}
 Then   the f.d.d. of $U^{n}$    converges $\cf^{x}$-stably to that of $\ce^{z'}$   as $n\to \infty$, where $\ce^{z'}(s,t)=c_{H}^{1/2}T^{H+1/2}\int_{s}^{t}z_{u}'dW_{u}$ and $W$ is a standard Brownian motion indpendent of $x$ (see \eqref{e.cezl}). 
\end{thm}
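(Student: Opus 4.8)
The plan is to recognise the Riemann-sum error $U^{n}$ as a rescaled compensated weighted sum of the type \eqref{e.ce}, up to a remainder that is negligible after the scaling $n^{H+1/2}$, and then to invoke Theorem~\ref{thm.taylor}(ii). Set $\bfw:=(z',z'',\dots,z^{(\ell)})$, viewed as $\bfw=(w^{(0)},\dots,w^{(\ell-1)})$ with $w^{(k)}=z^{(k+1)}$. Comparing the relation \eqref{e.r} for $\bfz$ (with $\ell$ there replaced by $\ell+1$) with the one for $\bfw$, one sees that the $k$-th remainder of $\bfw$ equals the $(k+1)$-st remainder of $\bfz$, so $\bfw$ is a process controlled by $(x,\ell,H)$ almost surely. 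Moreover the integer $\ell$ of the statement, the least one with $\ell H>1/2$, is precisely the least one with $\ell H+1/2>1$, so Theorem~\ref{thm.taylor} applies to $\bfw$. Let $r^{(0)},\dots,r^{(\ell)}$ denote the remainders of $\bfz$.

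First I would reduce to endpoints on the grid $\ll0,T\rr$. Fix a partition $\pi:0=s_{0}<\cdots<s_{N}=T$ and let $s_{j}'$ be the smallest grid point $\ge s_{j}$; then $|s_{j}'-s_{j}|\le T/n$ and $\{t_{k}:s_{j}'\le t_{k}<s_{j+1}'\}=\{t_{k}:s_{j}\le t_{k}<s_{j+1}\}$. Consequently the discrete sums in $U^{n}$ over $[s_{j},s_{j+1})$ and over $[s_{j}',s_{j+1}')$ coincide, while the corresponding integrals differ by $O(1/n)$ by continuity of $z$; hence $U^{n}_{s_{j}s_{j+1}}-U^{n}_{s_{j}'s_{j+1}'}\to0$ in probability, and it suffices to establish the claimed convergence for the family $(U^{n}_{s_{j}'s_{j+1}'})_{j}$. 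Next, for grid points $s',t'$ one has the exact identity $n^{-H-1/2}U^{n}_{s't'}=\sum_{s'\le t_{k}<t'}\int_{t_{k}}^{t_{k+1}}\delta z_{t_{k}u}\,du$, since then $\int_{s'}^{t'}z\,du$ and $\sum_{s'\le t_{k}<t'}z_{t_{k}}T/n$ both split over the cells $[t_{k},t_{k+1}]$. Expanding $\delta z_{t_{k}u}=\sum_{i=1}^{\ell}z^{(i)}_{t_{k}}x^{i}_{t_{k}u}+r^{(0)}_{t_{k}u}$ via Definition~\ref{def.control}, integrating, and using $\int_{t_{k}}^{t_{k+1}}x^{i}_{t_{k}u}\,du=h^{i}_{t_{k}t_{k+1}}$ together with $z^{(i)}=w^{(i-1)}$ and \eqref{e.ce}, this identity becomes $n^{-H-1/2}U^{n}_{s't'}=\ce^{w,n}_{\ell}(s',t')+R^{n}_{s't'}$, where $R^{n}_{s't'}:=\sum_{s'\le t_{k}<t'}\int_{t_{k}}^{t_{k+1}}r^{(0)}_{t_{k}u}\,du$.

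Then I would dispose of $R^{n}$: choosing $\ep\in(0,\ell H-1/2)$ and using the almost sure control of $\bfz$, there is a finite random variable $G$ with $|r^{(0)}_{t_{k}u}|\le G(u-t_{k})^{(\ell+1)H-\ep}$, so that $|R^{n}_{s't'}|\lesssim G\,(1/n)^{(\ell+1)H-\ep}$ uniformly over the (fixed) endpoints, whence $n^{H+1/2}|R^{n}_{s't'}|\lesssim G\,(1/n)^{\ell H-1/2-\ep}\to0$ almost surely. Finally, since $\cj_{s}^{t}$ — and therefore $\ce^{w,n}_{\ell}$ — depends on $(s,t)$ only through the set $\{t_{k}:s\le t_{k}<t\}$, the choice of $s_{j}'$ gives $\ce^{w,n}_{\ell}(s_{j}',s_{j+1}')=\ce^{w,n}_{\ell}(s_{j},s_{j+1})$, so $U^{n}_{s_{j}'s_{j+1}'}=n^{H+1/2}\ce^{w,n}_{\ell}(s_{j},s_{j+1})+n^{H+1/2}R^{n}_{s_{j}'s_{j+1}'}$. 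By Theorem~\ref{thm.taylor}(ii) applied to $\bfw$, the finite-dimensional distributions of $n^{H+1/2}\ce^{w,n}_{\ell}$ converge $\cf^{x}$-stably to those of $\ce^{w}=\ce^{z'}$; since the corrections coming from the grid reduction and from $R^{n}$ are $o_{P}(1)$ coordinatewise and $\cf^{x}$-stable convergence is preserved under adding such terms, the finite-dimensional distributions of $U^{n}$ converge $\cf^{x}$-stably to those of $\ce^{z'}$.

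I do not expect a genuine obstacle: once Theorem~\ref{thm.taylor} is available the argument is essentially bookkeeping, with the content concentrated in the controlled-path expansion of $\delta z$ and in the accounting that shows the remainder and boundary terms die under the scaling $n^{H+1/2}$ — which is exactly where the hypothesis $\ell H>1/2$ enters. The one point requiring a little attention is that the partition points of the finite-dimensional distributions need not lie on the grid, so both $\int z\,du$ and $\ce^{w,n}_{\ell}$ are a priori evaluated at $n$-dependent endpoints; this is resolved by the two elementary observations used above, that $\cj$ is insensitive to moving an endpoint to the nearest enclosing grid point and that $\int z\,du$ then changes only by $O(1/n)=o(n^{-H-1/2})$ after rescaling.
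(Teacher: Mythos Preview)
Your proposal is correct and follows essentially the same approach as the paper's proof: both expand $\delta z_{t_k u}$ via the controlled-path relation to rewrite $U^n$ as $n^{H+1/2}\ce^{z',n}_{\ell}$ plus a remainder, bound the remainder using $|r^{(0)}_{t_k u}|\le G(1/n)^{(\ell+1)H-\ep}$ together with $\ell H>1/2$, handle the boundary terms from non-grid endpoints via $|s-s'|,|t-t'|\le T/n$ and continuity of $z$, and then invoke Theorem~\ref{thm.taylor}(ii). The only differences are cosmetic---you separate the grid-reduction and remainder steps where the paper bundles them into a single $R^U$, and you spell out explicitly that $\bfw=(z',\dots,z^{(\ell)})$ inherits the controlled structure from $\bfz$.
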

 \begin{proof}
 Let $(s',t')\in \cs_{2}(\ll0,T\rr)$ be such that $\{t_{k}:s'\leq t_{k}<t'\} = \{t_{k}:s \leq t_{k}<t \} $. 
It is easy to see that we have 
\begin{eqnarray}
U^{n}_{st} &=& n^{H+1/2} \big( \sum_{s\leq t_{k}<t} \int_{t_{k}}^{t_{k+1}} \delta z_{t_{k}u} du + \int_{s}^{s'}  z_{u}du-\int_{t}^{t'} z_{u}du\big). 
\label{e.unst}
\end{eqnarray}
 Since $\bfz$ is controlled by $(x,\ell+1,H)$ (see Definition \ref{def.control})  we have  the relation $\delta z_{t_{k}u} = \sum_{i=1}^{\ell}z_{t_{k}}^{(i)} x^{i}_{t_{k}u}+r^{(0)}_{t_{k}u}$. Substituting this relation into \eqref{e.unst}   we obtain 
 \begin{eqnarray}
U^{n}_{st} &=& n^{H+1/2} \ce^{z',n}_{\ell}(s,t) 
+R^{U}, 
\label{e.ust2}
\end{eqnarray}
where 
\begin{eqnarray}
R^{U}&=&n^{H+1/2}  \big(\sum_{s\leq t_{k}<t} \int_{t_{k}}^{t_{k+1}} r^{(0)}_{t_{k}u} du 
+ \int_{s}^{s'}  z_{u}du-\int_{t}^{t'} z_{u}du\big)
\end{eqnarray}
and recall that $\ce^{z',n}_{\ell}(s,t) $ is defined in  \eqref{e.ce}.

By the definition of controlled processes we can find a finite random variable $G$ and a sufficiently small $\ep>0$ such that $ |r^{(0)}_{t_{k}u}|\leq G (1/n)^{(\ell+1)H-\ep} $ for all $(t_{k},u)\in\cs_{2}(t_{k},t_{k+1})$, $k=0,\dots,n-1$. Applying this relation to $R^{U}$ and taking into account that $\ell H>1/2$ we obtain that $R^{U}\to0$ almost surely as $n\to\infty$. On the other hand, applying Theorem \ref{thm.taylor} (i) to $\ce^{z',n}_{\ell}(s,t) $ we obtain that the f.d.d. of $\ce^{z',n}_{\ell}(s,t) $ converges $\cf^{x}$-stably to  that  of  $\ce^{z'}$. Applying these two   convergences   to  \eqref{e.ust2}   we obtain the desired   convergence of  $U^{n}$. This completes the proof.    
\end{proof}

\section{Euler method for additive SDEs}\label{section.euler}
Let $y$ be   solution of    additive differential equation: 
\begin{eqnarray}\label{e.sde}
y_{t} = y_{0} +\int_{0}^{t} b(y_{s})ds+\int_{0}^{t}\si(s) dx_{s} 
\qquad t\in[0,T] , 
\end{eqnarray}
where  $y_{0} $ is a constant vector  in $\RR^{m}$, $x$ is a  $1$-dimensional standard fBm with Hurst parameter $H<1/2$, $\si  $  and  $b$ are  deterministic mappings  from $t\in [0,T]$ to $ \si(t)=(\si_{1}(t),\dots, \si_{m}(t))\in\RR^{m} $ and  from $y\in\RR^{m}$ to  $b(y)= (b_{1}(y),\dots, b_{m}(y)) \in\RR^{m}$, respectively.  
Let $\cp:0=t_{0}<t_{1}<\cdots<t_{n}=T$ be the uniform  partition on $[0,T]$. 
We consider the following  Euler method for equation \eqref{e.sde}:  
\begin{eqnarray}
y_{t}^{(n)}= y_{t_{k}}^{(n)} + b(y^{(n)}_{t_{k}})(t-t_{k})+\si(t_{k}) (x_{t}-x_{t_{k}}),
\qquad
t\in[t_{k},t_{k+1}]
\label{e.euler1}
\end{eqnarray}
     for $k=0,1,\dots, n-1$.
Denote $\eta(t)=t_{k}$ for $t\in[t_{k},t_{k+1})$. Then \eqref{e.euler1} can also be   written  as: 
\begin{eqnarray}\label{e.euler}
y_{t}^{(n)} = y_{0} +\int_{0}^{t} b(y^{(n)}_{\eta(s)})ds+ \int_{0}^{t}  \si(\eta(s)) dx_{s}\,, \qquad t\in[0,T] .   
\end{eqnarray}
In this  section we establish the exact rate of convergence and asymptotic error distribution for the Euler method 
\eqref{e.euler}.

\subsection{Rate of convergence}
In this subsection we show that under  proper regularity conditions the Euler method converges   with the   rate $ {H+1/2}$. 

\begin{thm}\label{thm.conv}
Let $y$  be the solution of the equation \eqref{e.sde}   and $y^{n}$ be the Euler method defined in \eqref{e.euler1}. Let $\ell\in\NN$ be such that $\ell H>1/2$ and  assume that $b\in C_{b}^{\ell+1}$ and $\si \in C^{2}_{b}$.  
Then 

\noindent(i) There exists a constant $K>0$ such that  the following  estimate holds: 
\begin{eqnarray}
\big|
(y_{t}-y^{(n)}_{t})-(y_{s}-y^{(n)}_{s})
\big|_{L_{2}}\leq K (1/n)^{H+1/2 }  (t-s)^{1/2}
\label{e.rate}
\end{eqnarray}
for all $(s,t)\in\cs_{2}([0,T])$. 

\noindent(ii) The process $ n^{H+1/2}(y_{t}-y^{(n)}_{t})$, $t\in[0,T]$ is tight on $C([0,T])$.  
\end{thm}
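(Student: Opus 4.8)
The plan is to show that the error $e_t:=y_t-y^{(n)}_t$ is governed, up to a linear feedback term and a negligible remainder, by a compensated weighted sum of the type studied in Theorem \ref{thm.taylor}, and then to close the estimate by Gronwall's inequality. Since $y_0^{(n)}=y_0$ we have $e_0=0$, and subtracting \eqref{e.euler} from \eqref{e.sde} gives, for $(s,t)\in\cs_2([0,T])$,
\[
\delta e_{st}=\int_s^t\big(b(y_u)-b(y^{(n)}_{\eta(u)})\big)\,du+\int_s^t\big(\si(u)-\si(\eta(u))\big)\,dx_u .
\]
I would split $b(y_u)-b(y^{(n)}_{\eta(u)})=\big(b(y_u)-b(y_{\eta(u)})\big)+B_{\eta(u)}e_{\eta(u)}$, where $B_t:=\int_0^1\partial b\big(y^{(n)}_t+\theta e_t\big)\,d\theta$ is bounded by $\|\partial b\|_\infty$. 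Using that $y$ is controlled by $x$ with $y'=\si$ and remainder of order $u-\eta(u)$, a Taylor expansion of $b$ to order $\ell$ turns $\int_{t_k}^{t_{k+1}}\big(b(y_u)-b(y_{t_k})\big)\,du$ into $\sum_{i=1}^{\ell}\partial^i b(y_{t_k})\si(t_k)^{\otimes i}\,h^i_{t_k t_{k+1}}$ plus terms carrying at least one copy of the remainder of $y$; integration by parts on each cell (equivalently, a Taylor expansion of $\si$ together with $\int_{t_k}^{t_{k+1}}(u-t_k)^j\,dx_u=(t_{k+1}-t_k)^j\delta x_{t_k t_{k+1}}-j\int_{t_k}^{t_{k+1}}(u-t_k)^{j-1}\delta x_{t_k u}\,du$) turns $\int_{t_k}^{t_{k+1}}\big(\si(u)-\si(t_k)\big)\,dx_u$ into $-\si'(t_k)\,h^1_{t_k t_{k+1}}+\delta\si_{t_k t_{k+1}}\delta x_{t_k t_{k+1}}+(\text{lower order})$. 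The coefficient of $h^1$ is then $z_{t_k}:=\partial b(y_{t_k})\si(t_k)-\si'(t_k)$ and the coefficient of $h^i$ for $i\ge2$ is $\partial^i b(y_{t_k})\si(t_k)^{\otimes i}$, which is exactly the controlled derivative $z^{(i-1)}_{t_k}$ of $z$ in the sense of Definition \ref{def.control}; hence, after the routine adjustment of the endpoints $s,t$ to grid points,
\[
\delta e_{st}=\ce^{z,n}_{\ell}(s,t)+\int_s^t B_{\eta(u)}e_{\eta(u)}\,du+R_{st},
\]
with $R_{st}$ collecting all leftover terms.

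The next step is to verify that $\bfz=(z,z',\dots,z^{(\ell-1)})$, with $z^{(i-1)}=\partial^i b(y)\si^{\otimes i}$ for $i\ge2$, is controlled by $(x,\ell,H-\ep)$ in $L_p$ for every $\ep>0$ and $p\ge1$; this follows from the corresponding property of the SDE solution $y$ (standard under $b\in C_b^{\ell+1}$, $\si\in C_b^2$) and stability of controlled paths under composition with smooth maps, the explicit $t$-dependence through $\si,\si'$ contributing only terms of order $t-s$, which are harmless since $\ell H<1$. Theorem \ref{thm.taylor}(i) then gives $|\ce^{z,n}_\ell(s,t)|_{L_p}\le K_p(1/n)^{H+1/2}(t-s)^{1/2}$ for all $p$. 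For $R_{st}$ one checks, term by term, that each summand is a sum over $s\le t_k<t$ of a weight bounded in every $L_p$ times a quantity of size $(t_{k+1}-t_k)^\beta$ with $\beta$ large enough that, after summation (which needs only the triangle inequality, except for the term $\sum_k\delta\si_{t_k t_{k+1}}\delta x_{t_k t_{k+1}}\approx(T/n)\sum_k\si'(t_k)\delta x_{t_k t_{k+1}}$, handled through Lemma \ref{lem.abcd}), Lemma \ref{lem.stnbd} converts the result into $K_p(1/n)^{H+1/2}(t-s)^{1/2}$ in $L_p$. Here one repeatedly uses $H<1/2$, which makes $1/n\le(1/n)^{H+1/2}$, and $\ell H>1/2$, which makes the order-$(\ell+1)$ Taylor remainder of $b$ summable; the passage from $L_2$ to $L_p$ for $p>2$ is by hypercontractivity and the boundedness in $L_p$ of $y$ and its controlled derivatives.

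Finally I would close the estimate. Taking $L_p$-norms in the second identity with $s=0$ and using $|B_{\eta(u)}e_{\eta(u)}|_{L_p}\le\|\partial b\|_\infty|e_{\eta(u)}|_{L_p}$ gives $|e_t|_{L_p}\lesssim(1/n)^{H+1/2}+\int_0^t\sup_{r\le u}|e_r|_{L_p}\,du$, so Gronwall's inequality yields $\sup_{t\le T}|e_t|_{L_p}\lesssim(1/n)^{H+1/2}$; feeding this back bounds $\int_s^t B_{\eta(u)}e_{\eta(u)}\,du$ by $K(t-s)(1/n)^{H+1/2}\le KT^{1/2}(t-s)^{1/2}(1/n)^{H+1/2}$, which together with the bounds on $\ce^{z,n}_\ell$ and $R_{st}$ proves \eqref{e.rate} (in every $L_p$). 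For part (ii), choosing $p>2$ so that $p/2>1$, the uniform-in-$n$ bound $|n^{H+1/2}\delta e_{st}|_{L_p}\le K_p(t-s)^{1/2}$ together with $n^{H+1/2}e_0=0$ verifies the Kolmogorov--Chentsov tightness criterion, whence $\{n^{H+1/2}(y-y^{(n)})\}_n$ is tight in $C([0,T])$. I expect the main obstacle to be the bookkeeping behind the first identity: organizing the Taylor expansions so that every $(1/n)^H$-size contribution, in particular $\sum_k\si'(t_k)\,h^1_{t_k t_{k+1}}$ and its drift counterparts, is absorbed into $\ce^{z,n}_\ell$ with the correct controlled process $z=\partial b(y)\si-\si'$, so that $R_{st}$ retains only genuinely smaller terms; this is exactly the point where the hypothesis $\ell H>1/2$ (truncation order large enough) is needed.
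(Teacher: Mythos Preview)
Your approach is correct and is a genuine alternative to the paper's argument. The paper does not close the estimate by Gronwall's inequality; instead it introduces the fundamental solution $\La^{(n)}$ of the linearised equation (and its inverse $\Ga^{(n)}$), writes the error explicitly as $y_t-y^{(n)}_t=\La^{(n)}_tE(0,t)$ via variation of constants, and then bounds the increment $E(s,t)$. Consequently the paper Taylor-expands $b(y^{(n)}_u)-b(y^{(n)}_{\eta(u)})$ (around the \emph{scheme}), and the controlled process feeding into Theorem~\ref{thm.taylor}(i) is the $n$-dependent $z^{(i-1)}=\Ga^{(n)}\partial^ib(y^{(n)})\si^{\otimes i}$; the $\si'$ contribution is kept as a separate weighted $h^1$ sum $E_{211}$ and estimated on its own via Lemma~\ref{lem.cex} with $L=1$. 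In contrast you expand $b(y_u)-b(y_{\eta(u)})$ (around the \emph{true} solution), fold $-\si'$ into the weight so that the single controlled process $z=\partial b(y)\si-\si'$, $z^{(i-1)}=\partial^ib(y)\si^{\otimes i}$ for $i\ge2$ absorbs everything, and finish with a Gronwall loop on $|e_t|_{L_p}$.

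Your route is arguably cleaner for the pure rate estimate: the weight $z$ is $n$-independent, so the verification that $\bfz$ is controlled is once-and-for-all, whereas the paper must implicitly argue that the controlled-path bounds for $\Ga^{(n)}\partial^ib(y^{(n)})\si^{\otimes i}$ are uniform in $n$. Folding $-\si'$ into $z$ is legitimate because $\si'\in C^1_b$ implies $\delta(\si')_{st}=O(t-s)\le O((t-s)^{\ell H})$ (here one uses that the \emph{minimal} $\ell$ with $\ell H>1/2$ satisfies $\ell H<1$; this is the $\ell$ required by Theorem~\ref{thm.taylor}), so $-\si'$ is controlled with vanishing Gubinelli derivatives. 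What the paper's approach buys is structural: the explicit representation $y_t-y^{(n)}_t=\La^{(n)}_t[\ce^{z,n}_\ell(0,t)+E_{211}(0,t)]+o(n^{-H-1/2})$ is exactly what is needed to identify the asymptotic error distribution in Theorem~\ref{thm.euler}, so the extra machinery pays off in the next subsection. Two small remarks: your appeal to hypercontractivity is only valid for the pure-chaos pieces; for the $y$-dependent weights one simply bounds in $L_p$ directly via H\"older, which you also mention. And the term $\sum_k\delta\si_{t_k t_{k+1}}\delta x_{t_k t_{k+1}}$ can indeed be handled via Lemma~\ref{lem.abcd} as you say (giving directly $(1/n)^{H+1/2}(t-s)^{1/2}$ in $L_2$), though the paper simply invokes Young's inequality to get $(1/n)(t-s)^H$ and then converts via Lemma~\ref{lem.stnbd}.
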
 

\begin{remark}
A natural question is whether    the same     rate of the (backward) Euler method holds for multidimensional   fBm. 
The proof of Theorem \ref{thm.conv} suggests  that this is indeed the case.  
We leave the   discussion to a future research.  
\end{remark}
\begin{proof}[Proof of Theorem \ref{thm.conv}]The proof is divided into several steps. 

\noindent\emph{Step 1: Linear equation  for   $y  - y^{(n)} $.} In this step we derive  a linear equation for the error process $y  - y^{(n)} $. 
By taking   difference between the   equations   \eqref{e.sde} and \eqref{e.euler} we have:     
 \begin{eqnarray}\label{e.error}
y_{t} - y^{(n)}_{t} &=& \int_{0}^{t} [b(y_{s})- b(y^{(n)}_{\eta(s)}) ]ds
+ \int_{0}^{t}  [\si(s)-\si(\eta(s))] dx_{s}
\notag
\\
 &=& \int_{0}^{t} [b(y_{s})- b(y^{(n)}_{s}) ]ds+ \int_{0}^{t} [b(y^{(n)}_{s})- b(y^{(n)}_{\eta(s)}) ]ds
 + \int_{0}^{t}  [\si(s)-\si(\eta(s))] dx_{s}
\end{eqnarray}
for $  t\in[0,T] $. Denote     
\begin{eqnarray*}
b_{1}(s) = \int_{0}^{1} \partial b( y^{(n)}_{s} +\theta( y_{s} -y^{(n)}_{s}))d\theta, 
\qquad
s\in[0,T], 
\end{eqnarray*}
where recall that $\partial b$ is defined in \eqref{e.pb}. 
Then similar to equation \eqref{e.czn} we have the relation:  
\begin{eqnarray}\label{e.b1}
b(y_{s})- b(y^{(n)}_{s})    
&=&{b_{1}}(s)[ y_{s} -  y^{(n)}_{s} ],  
\end{eqnarray}
where the right-hand side stands for the matrix multiplication:
\begin{eqnarray*}
{b_{1}}(s)[ y_{s} -  y^{(n)}_{s} ] = 
\sum_{j=1}^{m}\int_{0}^{1}  {\partial_{j} b } ( y^{(n)}_{s} +\theta( y_{s} -y^{(n)}_{s}))d\theta \cdot [y^{j}_{s} -  y^{(n),j}_{s}],   
\end{eqnarray*}
  and  $y_{s} = (y_{s}^{1},\dots,y_{s}^{m})$ 
and  $y^{(n)}_{s} = (y_{s}^{(n),1},\dots,y_{s}^{(n),m})$.  
Substituting \eqref{e.b1} into \eqref{e.error} gives: 
\begin{eqnarray}\label{e.error2}
 y_{t} - y^{(n)}_{t}  &=& \int_{0}^{t}  {b_{1}}(s)[ y_{s} -  y^{(n)}_{s} ]ds+ \int_{0}^{t} [b(y^{(n)}_{s})- b(y^{(n)}_{\eta(s)}) ]ds
\notag
 \\
&&\qquad + \int_{0}^{t}  [\si(s)-\si(\eta(s))] dx_{s}, \qquad t\in[0,T] . 
\end{eqnarray}

\noindent\emph{Step 2: Fundamental solution of \eqref{e.error2}.}
Let $\La^{(n)}$ be the 
fundamental solution of the linear equation \eqref{e.error2}, namely,   
\begin{eqnarray}\label{e.lan}
\La_{t}^{(n)} = \id + \int_{0}^{t} b_{1} (s) \La_{s}^{(n)}ds\,, 
\qquad t\in[0,T] . 
\end{eqnarray}
Let $\Ga^{(n)}_{t}=(\La^{(n)}_{t})^{-1}$  be the inverse matrix of $\La^{(n)}_{t}$. It is well-known that $\Ga^{(n)} $ satisfies  the    equation:
\begin{eqnarray}\label{e.ga}
\Ga_{t}^{(n)} = \id - \int_{0}^{t}  \Ga_{s}^{(n)}  b_{1} (s) ds, 
\qquad t\in[0,T] . 
\end{eqnarray}
Applying Gronwall's inequality to \eqref{e.lan}-\eqref{e.ga} we obtain the following   estimates: 
\begin{eqnarray}
 |\La^{(n)}_{t} -\La^{(n)}_{s} | \leq Ke^{KT}(t-s)  
\qquad\text{and}\qquad
 |\Ga^{(n)}_{t} -\Ga^{(n)}_{s} | \leq Ke^{KT}(t-s) 
 \label{e.gabd} 
\end{eqnarray}
for all $(s,t)\in\cs_{2}([0,T])$. 

\noindent\emph{Step 3: Explicit representation of   $y-y^{(n)}$.}
By   definition of $\La^{(n)}$ in \eqref{e.lan} it is easy to verify    that we have 
\begin{eqnarray}
y_{t}-y^{(n)}_{t} &=& \La^{(n)}_{t} \int_{0}^{t} \Ga^{(n)}_{u} [b(y^{(n)}_{u})- b(y^{(n)}_{\eta(u)}) ]du
\notag
\\
&&\qquad+\La^{(n)}_{t}  \int_{0}^{t}  \Ga^{(n)}_{u}[\si(u)-\si(\eta(u))] dx_{u}
 .
 \label{e.error1}
\end{eqnarray}
From \eqref{e.error1}   we can write  
  \begin{eqnarray}
(y_{t}-y^{(n)}_{t})-(y_{s}-y^{(n)}_{s}) &=& (\La^{(n)}_{t}-\La^{(n)}_{s}) E (0,t) 
 +
\La^{(n)}_{s} E (s,t)   
\label{e.error4}
\end{eqnarray}
for $(s,t)\in\cs_{2}([0,T])$, 
where   
\begin{eqnarray}\label{e.ist}
E (s,t) =E_{1} (s,t) +E_{2} (s,t) ,
\end{eqnarray}
and
\begin{eqnarray}
E_{1} (s,t) &=&  \int_{s}^{t} \Ga^{(n)}_{u} [b(y^{(n)}_{u})- b(y^{(n)}_{\eta(u)}) ]du
\\
E_{2} (s,t) &=&\int_{s}^{t}  \Ga^{(n)}_{u}[\si(u)-\si(\eta(u))] dx_{u}\,. 
\notag
\end{eqnarray}

In the following steps 
we show that  there is constant $K$ such that  
\begin{eqnarray}
|E(s,t)|_{L_{p}}&\leq& K(t-s)^{1/2}(1/n)^{H+1/2}  
\label{e.ebd2}
\end{eqnarray}
for all $(s,t)\in\cs_{2}([0,T])$ and $n\in\NN$. 
The estimate \eqref{e.rate} will then follow by applying \eqref{e.ebd2} and  the estimate of $\La^{(n)}_{t}-\La^{(n)}_{s}$ in \eqref{e.gabd} to the right-hand side of \eqref{e.error4}.

\noindent\emph{Step 4: Estimate of    $E(s,t)$ when $|t-s|\leq T/n$.}
By the mean value theorem and then   the definition of $y^{n}$ in \eqref{e.euler1} it is clear that we have 
\begin{eqnarray}
|b(y^{(n)}_{u})- b(y^{(n)}_{\eta(u)}) |_{L_{p}}\lesssim (1/n)^{H}
\label{e.ebdy}
\end{eqnarray}
for $p\geq 1$. 
 Applying this estimate to   \eqref{e.ist} we get:  
\begin{eqnarray}
|E_{1}(s,t)|_{L_{p}}&\lesssim & (t-s)(1/n)^{H}. 
\label{e.ebd1}
\end{eqnarray}
Suppose that   $s$ and $t$ are such that  $|t-s|\leq T/n$, then we can bound the right-hand side of  \eqref{e.ebd1} by $(t-s)^{1/2}(1/n)^{H+1/2}$. By Young's inequality  it is easy to see that the same bound holds for $E_{2}$.  It follows that    the relation \eqref{e.ebd2} holds. 

In the following   we focus on the case when  $|t-s|>T/n$. 

\noindent\emph{Step 4: A decomposition of    $E_{1}(s,t)$.}
We first   consider the following decomposition of   \eqref{e.ist}: 
 \begin{eqnarray}\label{e.error3}
 E_{1}(s,t) &=&    J_{0}+\int_{s}^{t} \Ga^{(n)}_{\eta(u)} [b(y^{(n)}_{u})- b(y^{(n)}_{\eta(u)}) ]du, 
\end{eqnarray}
where
 \begin{eqnarray}\label{e.j0}
J_{0} =  \int_{s}^{t} (\Ga^{(n)}_{u}-\Ga^{(n)}_{\eta(u)}) [b(y^{(n)}_{u})- b(y^{(n)}_{\eta(u)}) ]du. 
\end{eqnarray}

Recall that we have $b\in C_{b}^{\ell+1}$. 
Applying the classical  Taylor's expansion for multivariate functions we obtain:   
\begin{eqnarray}\label{e.bexp}
b(y^{(n)}_{u})- b(y^{(n)}_{\eta(u)}) 
&=&\sum_{i=1}^{\ell}   \partial^{i}b(y^{(n)}_{\eta(u)})  (\delta y^{(n)}_{\eta(u) u})^{\otimes i} /i! 
+ r^{b} (  y^{(n)}_{u}) (\delta y^{(n)}_{\eta(u) u})^{\otimes (\ell+1)} 
\,,
\end{eqnarray}
where    
\begin{eqnarray}
\partial^{i}b(y^{(n)}_{\eta(u)})  (\delta y^{(n)}_{\eta(u) u})^{\otimes i} := \sum_{j_{1},\dots, j_{i}=1}^{m}\partial^{i}_{j_{1}\dots j_{m}}b(y^{(n)}_{\eta(u)})  \delta y^{(n),j_{1}}_{\eta(u) u}\cdots \delta y^{(n),j_{i}}_{\eta(u) u}\,, 
\notag
\end{eqnarray}
$\partial^{i}b$ is defined in \eqref{e.pbl}, 
and  $r^{b}(x)$, $x\in\RR^{m}$ is a function  such that 
\begin{eqnarray}
\sup_{x\in\RR^{m}}|r^{b}(x)| &\leq&   \max_{y\in\RR^{m}}|\partial^{\ell+1}b(y)|  /(\ell+1)!  \,. 
\label{e.rb}
\end{eqnarray} 
Consider the decomposition:    \begin{eqnarray*}
 (\delta y^{(n)}_{\eta(u) u})^{\otimes i}  
=\big( (\delta y^{(n)}_{\eta(u) u})^{\otimes i}   
-  (\si (\eta(u))  x_{\eta(u) u}^{1})^{\otimes i} \big)+ (\si (\eta(u))  x_{\eta(u) u}^{1})^{\otimes i} . 
\end{eqnarray*}
 Substituting this   into   \eqref{e.bexp} we get   
\begin{eqnarray}\label{e.bexp2}
b(y^{(n)}_{u})- b(y^{(n)}_{\eta(u)}) 
   =
    \sum_{i=1}^{\ell}   \partial^{i}b(y^{(n)}_{\eta(u)})  (\si (\eta(u))x^{1}_{\eta(u) u})^{\otimes i} /i!   +
 \tilde{r}^{b}_{\eta(u)u}
 \notag\\
 +r^{b} (  y^{(n)}_{u}) (\delta y^{(n)}_{\eta(u) u})^{\otimes (\ell+1)} \,, 
\end{eqnarray}
where
\begin{eqnarray}
 \tilde{r}^{b}_{\eta(u)u}= \sum_{i=1}^{\ell}   \partial^{i}b(y^{(n)}_{\eta(u)}) \big( (\delta y^{(n)}_{\eta(u) u})^{\otimes i}   
-  (\si (\eta(u))  x_{\eta(u) u}^{1})^{\otimes i} \big)/i! . 
\label{e.rbt}
\end{eqnarray}

Substituting \eqref{e.bexp2} into \eqref{e.error3} we obtain:  
\begin{eqnarray}
 E_{1}(s,t)  = J_{0}+ J_{1}+ R^{b}+\wt{R}^{b},
 \label{e.est}
\end{eqnarray}
where   
\begin{eqnarray}
&&\qquad J_{1}=    \sum_{i=1}^{\ell}  \int_{s}^{t} \Ga^{(n)}_{\eta(u)}  
 \partial^{i}b(y^{(n)}_{\eta(u)})   {(\si (\eta(u))x^{1}_{\eta(u) u})^{\otimes i}}/{i!} \, \,
 du  ,
\quad
\wt{R}^{b}=\int_{s}^{t} \Ga^{(n)}_{\eta(u)} \tilde{r}^{b}_{\eta(u)u}
du, 
\label{e.ji}
\\
&&\qquad R^{b}=\int_{s}^{t} \Ga^{(n)}_{\eta(u)} r^{b} (  y^{(n)}_{u}) (\delta y^{(n)}_{\eta(u) u})^{\otimes (L+1)}
du   .  
\label{e.j6} 
\end{eqnarray}

 \noindent\emph{Step 7: Estimate of $J_{0} $.}
Recall that $J_{0}$ is defined in \eqref{e.j0}. 
Applying   the estimates \eqref{e.gabd}   and \eqref{e.ebdy} to \eqref{e.j0}  we obtain that there is a constant $K$ such that  
\begin{eqnarray}
|J_{0}|_{L_{p}}&\leq& Kn^{-1-H}(t-s)  
\label{e.j0c}
\end{eqnarray}
for all $n\in\NN$,  $
(s,t)\in\cs_{2}([0,T]).$

 \noindent\emph{Step 8: Estimate of ${R}^{b} $.}  By the definition of $y^{(n)}$ it is clear that  $| \delta y^{(n)}_{\eta(u), u} |_{L_{p}}\lesssim (1/n)^{H}$. This   together with the estimates \eqref{e.gabd} and  \eqref{e.rb}  gives: 
\begin{eqnarray}
\big| \Ga^{(n)}_{\eta(u)} r^{b} (  y^{(n)}_{u}) (\delta y^{(n)}_{\eta(u) u})^{\otimes (\ell+1)} \big|_{L_{p}}&\lesssim&  (1/n)^{(\ell+1)H}. 
\label{e.rbdi1}
\end{eqnarray}
Substituting \eqref{e.rbdi1} into \eqref{e.j6} we obtain: 
\begin{eqnarray}
|R^{b}|_{L_{p}}&\leq&K (1/n)^{(\ell+1)H}(t-s) 
\label{e.rbdi}
\end{eqnarray}
for all $n\in\NN$,  $
(s,t)\in\cs_{2}([0,T]).$

 \noindent\emph{Step 8: Estimate of $\wt{R}^{b}$.}
 By the definition of $y^{(n)}$ it is clear that $|\delta y^{(n)}_{\eta(u) u}- \si(\eta(u))\delta x_{\eta(u)u} |_{L_{p}}\lesssim (1/n)$. It follows that for $i\geq1$ we have
 \begin{eqnarray}
|(\delta y^{(n)}_{\eta(u) u})^{\otimes i}   
-  (\si  (\eta(u)) x_{\eta(u) u}^{1})^{\otimes i} |_{L_{p}}&\lesssim&  1/n . 
\notag
\end{eqnarray}
Appying this relation  to 
  \eqref{e.rbt} we obtain the estimate: 
 \begin{eqnarray*}
|\tilde{r}^{b}_{\eta(u), u}|_{L_{p}}&\lesssim&  1/n. 
\end{eqnarray*}
This implies that $ \wt{R}^{b}$ is bounded by: 
\begin{eqnarray}
|\wt{R}^{b}|_{L_{p}}&\leq&K (1/n) (t-s) 
\qquad
\text{for all $n\in\NN$,  $
(s,t)\in\cs_{2}([0,T]).$}
\label{e.rtbd}
\end{eqnarray}


  \noindent\emph{Step 9: Estimate of $ J_{1}$.}
We define the processes:
\begin{eqnarray}
z_{t}^{(i-1)} &:=&   \Ga^{(n)}_{t}\partial^{i} b(y^{(n)}_{t})\si(t)^{\otimes i}
\notag
\\
  &:=&  
\sum_{j_{1},\cdots, j_{i}=1}^{m}\Ga^{(n)}_{t}\partial^{i}_{j_{1}\cdots j_{i}} b(y^{(n)}_{t})\si_{j_{1}}(t)\cdots \si_{j_{i}}(t) 
\label{e.zi}
\end{eqnarray}
for $i=1, \dots, \ell$.
It is easy to verify that    $(z,z',\dots, z^{(\ell-1)})$ satisfies conditions in Definition \ref{def.control} and      therefore is a process controlled by $(x,\ell,H)$ in $L_{p}$ for $p\geq 1$. 
 Note that 
\begin{eqnarray}
  {(\si(\eta(u)) x^{1}_{\eta(u) u})^{\otimes i}}/{i!}  =   {\si(\eta(u))^{\otimes i} (x^{1}_{\eta(u) u})^{i}}/{i!}  =   {\si(\eta(u))^{\otimes i}}x^{i}_{\eta(u) u} . 
\label{e.zi3}
\end{eqnarray}
Substituting \eqref{e.zi3} into \eqref{e.ji} and taking into account relation \eqref{e.zi} we have:  
\begin{eqnarray}
J_{1} &=& \sum_{i=1}^{\ell} \int_{s}^{t}  z_{\eta(u)}^{(i-1)}   x_{\eta(u)u}^{i}du\,.
\notag
\end{eqnarray}


Let $s'$, $t'\in[0,T]$ be such that   
\begin{eqnarray}
\{t_{k}: s'\leq t_{k}<t'\} = \{t_{k}: s\leq t_{k}<t\} . 
\label{e.stp}
\end{eqnarray}
 Then it is easy to see that we have:   
\begin{eqnarray}
J_{1} = 
\ce^{z, n}_{\ell}(s,t) + J_{11}\, , 
\label{e.j1j2i} 
\end{eqnarray}
where   $\ce^{z,n}_{\ell}(s,t)$ is defined in \eqref{e.ce} and 
\begin{eqnarray}
J_{11} &=& \sum_{i=1}^{\ell}
\Big(\int_{s}^{s'}   -
\int_{t}^{t'}\Big)  z_{\eta(u)}^{(i-1)}   x_{\eta(u)u}^{i} du . 
\notag
\end{eqnarray}

Since $ |z_{\eta(u)}^{(i-1)}   x_{\eta(u)u}^{i} |_{L_{p}}\lesssim (1/n)^{H}$ for $i\geq1$ we have the estimate:
\begin{eqnarray}
|J_{11}|_{L_{p}}\leq K n^{-H}(|s'-s|+|t-t'|) \leq  K(1/n)^{H}(t-s) 
 \label{e.j12bd}
\end{eqnarray}
for all $n\in\NN$,  $
(s,t)\in\cs_{2}([0,T]).$ 
On the other hand, according to  Theorem \ref{thm.taylor} we   have: 
 \begin{eqnarray}
|\ce^{z, n}_{\ell}(s,t) |_{L_{p}} \leq  
 K(1/n)^{H+1/2}(t-s)^{1/2} . 
\label{e.j11bd}
\end{eqnarray}

\noindent\emph{Step 10: Estimate of $E_{2}(s,t)$.}
Consider the following decomposition of $E_{2}(s,t)$: 
\begin{eqnarray}
E_{2} (s,t) &=& E_{21} (s,t) 
+ R^{\si}
\notag
\end{eqnarray}
where
\begin{eqnarray}
E_{21} (s,t) &=& \int_{s'}^{t'} \Ga^{(n)}_{\eta(u)}\si'(\eta(u)) (u-\eta(u))dx_{u} 
\label{e.e21}
\\
R^{\si} &=&\int_{s}^{t}  \Ga^{(n)}_{u}[\si(u)-\si(\eta(u))] dx_{u} - E_{21} (s,t) , 
\notag
\end{eqnarray}
and $s' $ and $t'$ are   as in \eqref{e.stp}.    Let us rewrite \eqref{e.e21} as: 
\begin{eqnarray}
E_{21}(s,t) &=& \sum_{s \leq t_{k}<t } \Ga^{(n)}_{t_{k}} \si'(t_{k}) \int_{t_{k}}^{t_{k+1}} (u-t_{k})dx_{u}. 
\notag
\end{eqnarray}
 Applying integration by parts for Young integrals we have: 
\begin{eqnarray}
E_{21}(s,t) &=& E_{211}(s,t) + E_{212}(s,t),
\notag
\end{eqnarray}
where
\begin{eqnarray}
E_{211}(s,t)  &=& -\sum_{s \leq t_{k}<t } \Ga^{(n)}_{t_{k}} \si'(t_{k}) \int_{t_{k}}^{t_{k+1}} x^{1}_{t_{k}u}du 
=- \sum_{s \leq t_{k}<t} \Ga^{(n)}_{t_{k}} \si'(t_{k})  h^{1}_{t_{k}t_{k+1}}
\label{e.e211}
\\
E_{212}(s,t) &=& \sum_{s \leq t_{k}<t } \Ga^{(n)}_{t_{k}} \si'(t_{k})   x^{1}_{t_{k}t_{k+1}} (t_{k+1}-t_{k}).   
\notag
\end{eqnarray}
   Applying Young's inequality  to $E_{212}$ we have the estimate  
   \begin{eqnarray}
|E_{212}(s,t)|_{L_{p}} &\leq& K (1/n)(t-s)^{H}. 
\label{e.e212bd}
\end{eqnarray}
Similarly, applying Young's inequality again to $E_{211}$ and invoking the estimate \eqref{e.cexbdl2} with $L=1$ we obtain:
\begin{eqnarray}
|E_{211}(s,t)|_{L_{p}} &\leq& K (1/n)^{H+1/2}(t-s)^{1/2}. 
\label{e.e211bd}
\end{eqnarray}

It can be shown that $R^{\si}$ is equal to the summation of several triple Young integrals and are bounded by 
\begin{eqnarray}
|R^{\si}|_{L_{p}}&\leq & K (1/n)(t-s)^{H}. 
\label{e.rsbd}
\end{eqnarray}
  For conciseness we leave the details of   proof for \eqref{e.rsbd} to the patient reader.



 \noindent\emph{Step 11: Conclusion.}
 According to Lemma \ref{lem.stnbd}  we can replace  the right-hand sides of the   estimates \eqref{e.j0c},  \eqref{e.rbdi}, \eqref{e.rtbd}, \eqref{e.j12bd} by $K(1/n)^{H+1/2}(t-s)^{1/2}$.  Applying this estimate of $J_{0}$, $ J_{11}$,  $R^{b}$, $\wt{R}^{b}$ together with the estimate \eqref{e.j11bd}  of $\ce^{z, n}_{\ell}(s,t) $   to \eqref{e.j1j2i} and   \eqref{e.est} we obtain that the upper-bound in  
\eqref{e.ebd2} holds for $E_{1}(s,t)$. 
Similarly, in summary of the estimates  \eqref{e.e212bd}-\eqref{e.rsbd} for   $E_{211}$, $E_{212}$, $R^{\si}$  we obtain that \eqref{e.ebd2} holds for $E_{2}(s,t)$. Applying these   estimates of $E_{1}$ and $E_{2}$  to \eqref{e.ist} shows that \eqref{e.ebd2} holds.  
  Recalling the argument in the end of Step~3, we thus conclude the relation \eqref{e.rate}. 
According to  \cite[Theorem 12.3 and  (12.51)]{billingsley},   relation \eqref{e.rate} implies that  the normalized error process  $ n^{H+1/2}(y -y^{(n)} )$   is tight $C([0,T])$. 
\end{proof}

In the following we state a corollary of Theorem \ref{thm.conv}. 
\begin{cor}\label{cor.error}
Let the assumptions be as in Theorem \ref{thm.conv}. Let $\La^{(n)}$, $\bfz$   and $E_{211}$ be defined in \eqref{e.lan}, \eqref{e.zi} and \eqref{e.e211}, respectively. Then we have the convergence:
\begin{eqnarray}
\qquad\lim_{n\to\infty}\sup_{t\in[0,T]}n^{H+1/2} \big |(y_{t}-y^{(n)}_{t} ) -   \La^{(n)}_{t} \ce^{z, n}_{\ell}(0,t) - 
\La^{(n)}_{t}  E_{211}(0,t)  \big|_{L_{p}} =0. 
\label{e.error6}
\end{eqnarray}

\end{cor}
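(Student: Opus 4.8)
The plan is to assemble the corollary directly from the decompositions and remainder bounds already produced in the proof of Theorem~\ref{thm.conv}; no new analytic input is needed. First I would recall from \eqref{e.error1} and \eqref{e.ist} that $y_{t}-y^{(n)}_{t}=\La^{(n)}_{t}E(0,t)$ with $E(0,t)=E_{1}(0,t)+E_{2}(0,t)$, together with the decompositions $E_{1}(0,t)=\ce^{z,n}_{\ell}(0,t)+J_{0}+J_{11}+R^{b}+\wt{R}^{b}$ (combine \eqref{e.est} and \eqref{e.j1j2i}) and $E_{2}(0,t)=E_{211}(0,t)+E_{212}(0,t)+R^{\si}$ (from Step~10 of that proof). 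Since these decompositions are purely algebraic (Taylor expansion plus rearrangement, and integration by parts for Young integrals), they hold for every $t\in(0,T]$, not only for $t>T/n$; and since $0=t_{0}$ is a partition point one may take $s'=0$ in the definitions of $J_{11}$, $E_{211}$ and $R^{\si}$. Subtracting $\ce^{z,n}_{\ell}(0,t)+E_{211}(0,t)$ and multiplying through on the left by $\La^{(n)}_{t}$ gives
$$(y_{t}-y^{(n)}_{t})-\La^{(n)}_{t}\ce^{z,n}_{\ell}(0,t)-\La^{(n)}_{t}E_{211}(0,t)=\La^{(n)}_{t}\,\mathcal{R}^{(n)}_{t},$$
where $\mathcal{R}^{(n)}_{t}:=J_{0}+J_{11}+R^{b}+\wt{R}^{b}+E_{212}(0,t)+R^{\si}$, all summands evaluated on the interval $(0,t)$.

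Next I would control $\La^{(n)}_{t}$ uniformly. Because $b\in C_{b}^{\ell+1}$, the matrix $b_{1}(s)$ of \eqref{e.b1} satisfies $|b_{1}(s)|\le\|\partial b\|_{\infty}$ pointwise in $\omega$, so Gronwall's inequality applied to \eqref{e.lan} yields $|\La^{(n)}_{t}|\le e^{\|\partial b\|_{\infty}T}=:C_{\Lambda}$ for all $t\in[0,T]$, all $n$, and almost surely. Hence $|\La^{(n)}_{t}\mathcal{R}^{(n)}_{t}|_{L_{p}}\le C_{\Lambda}|\mathcal{R}^{(n)}_{t}|_{L_{p}}$, and it suffices to prove $n^{H+1/2}\sup_{t\in[0,T]}|\mathcal{R}^{(n)}_{t}|_{L_{p}}\to0$.

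Then I would invoke the remainder estimates of Theorem~\ref{thm.conv}, uniformly in $t\in[0,T]$: $|J_{0}|_{L_{p}}\lesssim n^{-1-H}$ by \eqref{e.j0c}; $|R^{b}|_{L_{p}}\lesssim n^{-(\ell+1)H}$ by \eqref{e.rbdi}; $|\wt{R}^{b}|_{L_{p}}\lesssim n^{-1}$ by \eqref{e.rtbd}; $|J_{11}|_{L_{p}}\le Kn^{-H}(|s'-0|+|t-t'|)\le 2KTn^{-1-H}$ by \eqref{e.j12bd} and $s'=0$; $|E_{212}(0,t)|_{L_{p}}\lesssim n^{-1}$ by \eqref{e.e212bd}; and $|R^{\si}|_{L_{p}}\lesssim n^{-1}$ by \eqref{e.rsbd}. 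Since $H<1/2$ we have $1+H>H+1/2$ and $1>H+1/2$, and since $\ell H>1/2$ we have $(\ell+1)H>H+1/2$; so $n^{H+1/2}$ times each of these bounds tends to $0$. Adding the finitely many terms and combining with the two previous displays yields \eqref{e.error6}. (For $t\le T/n$, where the decomposition of $E_{1}$ is less natural, one can alternatively bound $|E(0,t)|_{L_{p}}$, $|\ce^{z,n}_{\ell}(0,t)|_{L_{p}}$ and $|E_{211}(0,t)|_{L_{p}}$ each by $Kt^{1/2}(1/n)^{H+1/2}$ via \eqref{e.ebd2}, \eqref{e.czbd} and \eqref{e.e211bd}, so the whole expression is $O\big(n^{-1/2}(1/n)^{H+1/2}\big)$ there.)

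There is essentially no serious obstacle: this is a genuine corollary, built entirely from objects already in hand. The three points that require a little care are (i) checking that the decompositions of $E_{1}$ and $E_{2}$ are algebraic and hence valid for all $t$, not only in the regime $t>T/n$ emphasized in Theorem~\ref{thm.conv} (or bypassing this via the small-$t$ estimate above); (ii) using $s'=0$ so that $J_{11}$ is genuinely $O(n^{-1-H})$ rather than merely $O(n^{-H})$; and (iii) the pathwise uniform bound on $\La^{(n)}$, which is what lets one pull $\La^{(n)}_{t}$ out of the $L_{p}$-norm.
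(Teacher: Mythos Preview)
Your proof is correct and follows essentially the same route as the paper: both express $y_{t}-y^{(n)}_{t}=\La^{(n)}_{t}E(0,t)$, expand $E(0,t)$ via \eqref{e.est} and \eqref{e.j1j2i} into $\ce^{z,n}_{\ell}(0,t)+E_{211}(0,t)$ plus the remainders $J_{0},J_{11},R^{b},\wt{R}^{b},E_{212},R^{\si}$, and then invoke the bounds \eqref{e.j0c}, \eqref{e.rbdi}, \eqref{e.rtbd}, \eqref{e.j12bd}, \eqref{e.e212bd}, \eqref{e.rsbd}. You are in fact more careful than the paper on two points the paper leaves implicit: the pathwise uniform bound on $\La^{(n)}$ needed to pull it out of the $L_{p}$-norm, and the observation that with $s=0=s'$ the first inequality in \eqref{e.j12bd} yields $|J_{11}|_{L_{p}}\lesssim n^{-1-H}$ (the cruder bound $n^{-H}(t-s)$ would not suffice).
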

\begin{proof}
According to     \eqref{e.error4}, \eqref{e.est} and \eqref{e.j1j2i} we have the relation: 
\begin{eqnarray}
y_{t}-y^{(n)}_{t}  = \La^{(n)}_{t} E(0,t)
&=&
 \La^{(n)}_{t}( J_{0}+ \ce^{z, n}_{\ell}(0,t)+J_{11}+ R^{b}+\wt{R}^{b} 
 \notag
\\
&&
\qquad\qquad+ E_{212}(0,t)+ E_{211}(0,t)+R^{\si}) 
\label{e.error7}
\end{eqnarray}
for $
t\in[0,T].$
Applying the estimates of $J_{0}$, $R^{b}$, $\wt{R}^{b}$, $J_{11}$, $E_{212}$, $R^{\si}$ in \eqref{e.j0c}, \eqref{e.rbdi}, \eqref{e.rtbd}, \eqref{e.j12bd}, \eqref{e.e212bd}, \eqref{e.rsbd} to \eqref{e.error7} we obtain     the convergence \eqref{e.error6}. 
\end{proof}


\subsection{Asymptotic error distribution}

With the previous preparations  let us now     consider  asymptotic error distribution of the Euler method. 
Let $\La $ be solution  of the   equation:  
\begin{eqnarray}\label{e.la}
\La_{t} = \id+ \int_{0}^{t} \partial b(y_{s})\La_{s}ds
\,, 
\qquad t\in[0,T].
\end{eqnarray}
Let  $\Ga_{t} =\La_{t}^{-1}$ be the inverse matrix of   $\La_{t}$. As  in \eqref{e.ga}-\eqref{e.gabd} we have the  equation:  
\begin{eqnarray}
\Ga_{t}  = \id - \int_{0}^{t}  \Ga_{s} \partial b (y_{s}) ds . 
\label{e.ga1}
\end{eqnarray}
The Gronwall's ienquality implies that we have the estimate: 
\begin{eqnarray}
|\La_{t} -\La_{s} |   \leq Ke^{KT}(t-s), 
\qquad
 |\Ga_{t} -\Ga_{s} |   \leq Ke^{KT}(t-s) , 
 \qquad
 (s,t)\in\cs_{2}([0,T]). 
\notag
\end{eqnarray}

The following result shows that $(\La,\Ga)$ is the limiting process of $(\La^{n},\Ga^{n})$. 

\begin{lemma}
Let $\La$,  $\La^{(n)}$, $\Ga$ and $\Ga^{(n)}$ be defined in \eqref{e.lan}, \eqref{e.ga}, \eqref{e.la}, and \eqref{e.ga1}, respectively.       Then for $p\geq 1$   there is a constant $K>0$ such that   the following estimate holds: 
\begin{eqnarray}
\sup_{t\in[0,T]}\big(|\La_{t}-\La^{(n)}_{t} |_{L_{p}}+|\Ga_{t}-\Ga^{(n)}_{t} |_{L_{p}}\big) \leq K  (1/n)^{H+1/2} . 
\label{e.laconv} 
\end{eqnarray}
\end{lemma}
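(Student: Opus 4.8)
The plan is to compare the integral equations \eqref{e.lan} and \eqref{e.la} (and likewise \eqref{e.ga} and \eqref{e.ga1}) and apply a Gronwall argument, with the main input being an $L_p$-estimate on the difference of the coefficients $b_1(s)$ and $\partial b(y_s)$. Recall that $b_1(s) = \int_0^1 \partial b(y^{(n)}_s + \theta(y_s - y^{(n)}_s))\, d\theta$, so by the mean value theorem and $b \in C_b^{\ell+1}$ (which gives $\partial^2 b$ bounded) we have
\begin{eqnarray}
|b_1(s) - \partial b(y_s)|_{L_p} \lesssim |y_s - y^{(n)}_s|_{L_p} \lesssim (1/n)^{H+1/2},
\notag
\end{eqnarray}
where the last bound comes from Theorem \ref{thm.conv}(i) applied with $s=0$ (using that the error process starts at $0$, so $y_t - y^{(n)}_t = (y_t - y^{(n)}_t) - (y_0 - y^{(n)}_0)$, giving $|y_t - y^{(n)}_t|_{L_2} \leq K(1/n)^{H+1/2} t^{1/2}$, and then hypercontractivity—or simply re-running the $L_p$ version of that estimate—for general $p$). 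This is the key quantitative estimate; the rest is soft.

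Next I would subtract \eqref{e.la} from \eqref{e.lan} to obtain, for $D_t := \La_t - \La^{(n)}_t$,
\begin{eqnarray}
D_t = \int_0^t \big(b_1(s)\La^{(n)}_s - \partial b(y_s)\La_s\big)\, ds
= \int_0^t \partial b(y_s) D_s\, ds + \int_0^t \big(b_1(s) - \partial b(y_s)\big)\La^{(n)}_s\, ds.
\notag
\end{eqnarray}
Since $\partial b(y_s)$ is bounded (as $b \in C_b^{\ell+1}$) and $\La^{(n)}$ is uniformly bounded in $L_p$ (Gronwall applied to \eqref{e.lan}, using boundedness of $b_1$), taking $L_p$-norms and using Minkowski's integral inequality gives
\begin{eqnarray}
|D_t|_{L_p} \leq K\int_0^t |D_s|_{L_p}\, ds + K\,T\,(1/n)^{H+1/2}.
\notag
\end{eqnarray}
Gronwall's inequality then yields $\sup_{t\in[0,T]} |D_t|_{L_p} \leq K e^{KT}(1/n)^{H+1/2}$. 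The same argument applied to \eqref{e.ga} versus \eqref{e.ga1}—writing the difference $\Ga_t - \Ga^{(n)}_t$, using $\Ga^{(n)}_t b_1(t) - \Ga_t \partial b(y_t) = (\Ga^{(n)}_t - \Ga_t)\partial b(y_t) + \Ga^{(n)}_t(b_1(t) - \partial b(y_t))$ and the uniform $L_p$-boundedness of $\Ga^{(n)}$—gives the companion estimate for $|\Ga_t - \Ga^{(n)}_t|_{L_p}$, and adding the two bounds gives \eqref{e.laconv}.

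The only genuine obstacle is justifying the $L_p$ (not just $L_2$) version of the rate estimate $|y_t - y^{(n)}_t|_{L_p} \lesssim (1/n)^{H+1/2}$, since Theorem \ref{thm.conv}(i) is stated in $L_2$; however, inspecting its proof, every intermediate estimate (for $J_0$, $R^b$, $\wt{R}^b$, $J_{11}$, $E_{211}$, $E_{212}$, $R^\si$, and $\ce^{z,n}_\ell$ via Theorem \ref{thm.taylor}(i)) is in fact available in $L_p$, so the $L_p$ rate holds verbatim. Everything else—the coefficient comparison, the uniform $L_p$ bounds on $\La^{(n)}, \Ga^{(n)}$, and the two Gronwall steps—is routine.
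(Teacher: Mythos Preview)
Your proposal is correct and follows essentially the same route as the paper: subtract the two integral equations, split off the term $\partial b(y_s)(\La_s-\La^{(n)}_s)$, control the remaining coefficient error $b_1(s)-\partial b(y_s)$ via the mean value theorem and the rate estimate \eqref{e.rate}, and close with Gronwall (then repeat for $\Ga$). Your observation that the $L_p$ version of Theorem~\ref{thm.conv}(i) is needed, and that its proof already delivers it, is exactly right; note also that $\La^{(n)}$ is in fact a.s.\ bounded (not just $L_p$-bounded) since $b_1$ is uniformly bounded, which makes the product estimate immediate. (There is a harmless sign slip in your displayed decomposition of $D_t$, but it does not affect the argument.)
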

\begin{proof}
Take the difference between \eqref{e.la} and \eqref{e.lan}.  We obtain: 
\begin{eqnarray}
\La_{t}-\La^{(n)}_{t}= \int_{0}^{t} \partial b(y_{s})(\La_{s}-\La_{s}^{(n)})ds+\int_{0}^{t} (\partial b(y_{s}) -b_{1}(s))\La_{s}^{(n)} ds.
\label{e.ladiff}
\end{eqnarray}
Applying   $L_{p}$-norm in both sides of \eqref{e.ladiff} gives: 
\begin{eqnarray}
|\La_{t}-\La^{(n)}_{t}|_{L_{p}}
&\lesssim& \int_{0}^{t}  |\La_{s}-\La_{s}^{(n)}|_{L_{p}}ds+\int_{0}^{t} |\partial b(y_{s}) -b_{1}(s)|_{L_{2p}}  ds. 
\label{e.lad}
\end{eqnarray}
Note that by applying the mean value theorem and then the estimate \eqref{e.rate} we have:
\begin{eqnarray}
\int_{0}^{t} |\partial b(y_{s}) -b_{1}(s)|_{L_{2p}}  ds \lesssim
\int_{0}^{t} |   y_{s}  -y_{s}^{(n)}|_{L_{2p}}  ds 
 \lesssim  (1/n)^{H+1/2}. 
\label{e.lac1}
\end{eqnarray}
Substituting \eqref{e.lac1} into \eqref{e.lad} we   obtain:
\begin{eqnarray}
|\La_{t}-\La^{(n)}_{t}|_{L_{p}}
&\lesssim& \int_{0}^{t}  |\La_{s}-\La_{s}^{(n)}|_{L_{p}}ds+ (1/n)^{H+1/2}. 
\label{e.lad2}
\end{eqnarray}
It then follows from   Gronwall's inequality   that relation \eqref{e.laconv} holds for $|\La_{t}-\La^{(n)}_{t} |_{L_{p}}$. 
The estimate for $|\Ga_{t}-\Ga^{(n)}_{t} |$ can be shown in the similar way  and is   omitted  for   conciseness.    
\end{proof}

In the following we establish the asymptotic error distribution of the Euler method.  
  \begin{theorem}\label{thm.euler}
Let the assumptions be as in Theorem \ref{thm.conv}. Denote $U^{(n)}_{t}= n^{H+1/2}(y_{t}^{(n)}-y_{t}) $, $t\in[0,T]$.  Then we have the  following  convergence in distribution on $C([0,T])$: 
\begin{eqnarray}
(U^{(n)}_{t},x_{t},  t\in[0,T])  \to (U_{t},x_{t},  t\in [0,T]), 
\label{e.errorc}
\end{eqnarray}
where $U$ is the solution of the linear equation: 
\begin{eqnarray}
U_{t} = \int_{0}^{t}\partial b(y_{u})  U_{u}  du +c_{H}^{1/2}T^{H+1/2}\int_{0}^{t} \big[\partial b (y_{u}) \si (u)-\si'(u)\big]dW_{u}
\label{e.udef1}
\end{eqnarray}
and $W$ is a Brownian motion independent of $x$ and $c_{H}$ is the constant defined in \eqref{e.rho}. 
\end{theorem}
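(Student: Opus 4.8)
The plan is to combine the explicit representation of the error process from Corollary~\ref{cor.error} with the stable limit theorem for compensated weighted sums (Theorem~\ref{thm.taylor}) and the stable limit theorem for the monomial-weighted Skorohod sum (Theorem~\ref{prop.du}), following essentially the same pattern as in the proof of Theorem~\ref{thm.integral}. By Corollary~\ref{cor.error} we have the uniform $L_{p}$-approximation
\begin{eqnarray}
n^{H+1/2}\big(y^{(n)}_{t}-y_{t}\big) = -\La^{(n)}_{t}\Big( n^{H+1/2}\ce^{z,n}_{\ell}(0,t)+ n^{H+1/2}E_{211}(0,t)\Big) + o(1)
\notag
\end{eqnarray}
in $L_{p}$ uniformly in $t\in[0,T]$, where $\bfz=(z,z',\dots,z^{(\ell-1)})$ is the process defined in \eqref{e.zi}, which is controlled by $(x,\ell,H)$. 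So the first step is to reduce \eqref{e.errorc} to the joint $\cf^{x}$-stable convergence of the triple $\big(n^{H+1/2}\ce^{z,n}_{\ell}(0,t),\, n^{H+1/2}E_{211}(0,t),\, x_{t}\big)_{t\in[0,T]}$, after which the continuous-mapping-type argument (together with $\La^{(n)}\to\La$ from \eqref{e.laconv} and the linear ODE solved by $\La$) identifies the limit.

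The second step is to handle $E_{211}(0,t)$. Recalling \eqref{e.e211}, we have $E_{211}(0,t)=-\sum_{0\le t_{k}<t}\Ga^{(n)}_{t_{k}}\si'(t_{k})h^{1}_{t_{k}t_{k+1}}$, which is a Riemann-Stieltjes sum $\cj_{0}^{t}(w^{(n)},h^{1})$ with weight $w^{(n)}_{t}=-\Ga^{(n)}_{t}\si'(t)$. Since $\Ga^{(n)}\to\Ga$ in $L_{p}$ uniformly and $\si'$ is continuous, one first replaces $w^{(n)}$ by $w_{t}=-\Ga_{t}\si'(t)$ (a process of bounded variation, hence trivially controlled by $(x,1,H)$ with zero Gumbel-order derivative), absorbing the error into $o(1)$ via \eqref{e.cexbdl2} with $L=1$ and \eqref{e.laconv}. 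Then $n^{H+1/2}\cj_{0}^{t}(w,h^{1})$ is precisely $\ce^{w,n}_{1}$-type object and Theorem~\ref{thm.taylor} (with $\ell=1$, which is permissible here only if $H>1/2$ — so instead one uses the monomial reduction $\ce^{x}_{1}$ and Theorem~\ref{prop.du} directly, exactly as in Lemma~\ref{lem.cex}(ii) and Step~3 of the proof of Theorem~\ref{thm.taylor}) yields $n^{H+1/2}E_{211}(0,t)\to -c_{H}^{1/2}T^{H+1/2}\int_{0}^{t}\Ga_{u}\si'(u)dW_{u}$, $\cf^{x}$-stably, with $W$ the \emph{same} Brownian motion appearing in the limit of $\ce^{z,n}_{\ell}$. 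This last point — that both compensated sums converge stably against the \emph{same} independent Brownian motion $W$ — is the crucial structural fact, and it follows because in the proof of Theorem~\ref{prop.du} the limit Brownian motion $W$ is constructed once and for all from the Skorohod sums $Z^{(n)}$, so joint stable convergence of any finite collection of such sums holds with a single $W$; here both $\ce^{z,n}_{\ell}$ and $E_{211}$ reduce (via Lemma~\ref{lem.cex} and Theorem~\ref{thm.taylor}) to linear functionals of the same $Z^{(n),i}$'s.

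The third step assembles the limit. With $z_{u}=\Ga_{u}\partial b(y_{u})\si(u)$ (the $i=1$ component from \eqref{e.zi}, noting $z^{(0)}=z$), Theorem~\ref{thm.taylor}(ii) gives $n^{H+1/2}\ce^{z,n}_{\ell}(0,t)\to c_{H}^{1/2}T^{H+1/2}\int_{0}^{t}\Ga_{u}\partial b(y_{u})\si(u)\,dW_{u}$, $\cf^{x}$-stably. Combining with the $E_{211}$ limit and $\La^{(n)}_{t}\to\La_{t}$, and using $-\La_{t}(\cdots)$, the normalized error $n^{H+1/2}(y^{(n)}_{t}-y_{t})$ converges $\cf^{x}$-stably in the f.d.d.\ sense to
\begin{eqnarray}
\widetilde U_{t}= -c_{H}^{1/2}T^{H+1/2}\La_{t}\int_{0}^{t}\Ga_{u}\big[\partial b(y_{u})\si(u)-\si'(u)\big]dW_{u}.
\notag
\end{eqnarray}
Then one verifies by the variation-of-constants formula (differentiating, using $\dot\La_{t}=\partial b(y_{t})\La_{t}$ and $\Ga_{t}=\La_{t}^{-1}$) that $\widetilde U$ with a sign flip solves the linear SDE \eqref{e.udef1}; since $U^{(n)}_{t}=n^{H+1/2}(y^{(n)}_{t}-y_{t})=-n^{H+1/2}(y_{t}-y^{(n)}_{t})$, the signs match and $U^{(n)}$ converges in f.d.d.\ to $U$. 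Finally, f.d.d.\ convergence is upgraded to convergence in law on $C([0,T])$ by invoking the tightness established in Theorem~\ref{thm.conv}(ii) (the estimate \eqref{e.rate} and Billingsley's criterion), and the joint convergence with $x_{t}$ is automatic because the convergence of $U^{(n)}$ is $\cf^{x}$-stable.

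The main obstacle I anticipate is the second step: establishing that the two compensated sums $n^{H+1/2}\ce^{z,n}_{\ell}$ and $n^{H+1/2}E_{211}$ converge \emph{jointly} and $\cf^{x}$-stably against one common Brownian motion $W$ — individually each convergence is in hand (Theorem~\ref{thm.taylor}, Theorem~\ref{prop.du}), but the joint statement requires going back into the smart-path interpolation argument of Theorem~\ref{prop.du} and checking that it produces a single limiting $W$ for an arbitrary finite vector of Skorohod sums $(Z^{(n),i_{1}},\dots,Z^{(n),i_{m}})$ evaluated over a common refining partition. Once that joint stable CLT is granted, the rest is bookkeeping: careful tracking of the deterministic, uniformly convergent weight processes $\La^{(n)},\Ga^{(n)}$, the reduction of Riemann-Stieltjes sums to monomial-weighted ones via the controlled-path expansion, and the linear-ODE identification of the limit.
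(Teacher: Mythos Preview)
Your proposal is correct and follows the same overall architecture as the paper: invoke Corollary~\ref{cor.error} to reduce to the stable f.d.d.\ convergence of $n^{H+1/2}\La^{(n)}_{t}\big(\ce^{z,n}_{\ell}(0,t)+E_{211}(0,t)\big)$, replace the $n$-dependent weights $\La^{(n)},\Ga^{(n)}$ by their limits $\La,\Ga$ via \eqref{e.laconv}, apply the limit theorem, identify the limit by variation of constants, and upgrade to $C([0,T])$ via the tightness from Theorem~\ref{thm.conv}(ii).

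The one place where the paper is simpler than your outline is precisely the point you flag as the ``main obstacle''. Rather than establishing a \emph{joint} stable CLT for the pair $\big(n^{H+1/2}\ce^{\bar z,n}_{\ell},\,n^{H+1/2}\bar E_{211}\big)$ by reopening the smart-path argument of Theorem~\ref{prop.du} for vectors of Skorohod sums, the paper simply absorbs $\bar E_{211}$ into the compensated sum. Since $\bar E_{211}(0,t)=-\cj_{0}^{t}(\Ga\si',h^{1})$ and $t\mapsto\Ga_{t}\si'(t)$ is Lipschitz (hence contributes nothing to the Gubinelli derivatives), the tuple $(\bar z-\Ga\si',\,\bar z',\dots,\bar z^{(\ell-1)})$ is again controlled by $(x,\ell,H)$, and
\[
\ce^{\bar z,n}_{\ell}(0,t)+\bar E_{211}(0,t)\;=\;\ce^{\,\bar z-\Ga\si',\,n}_{\ell}(0,t).
\]
A \emph{single} application of Theorem~\ref{thm.taylor}(ii) then yields the stable f.d.d.\ convergence to $c_{H}^{1/2}T^{H+1/2}\int_{0}^{t}\Ga_{u}\big[\partial b(y_{u})\si(u)-\si'(u)\big]\,dW_{u}$ with one Brownian motion $W$, so no joint-limit issue ever arises. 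Your route---treating $E_{211}$ separately via the monomial reduction and Theorem~\ref{prop.du}, and then verifying that the same $W$ appears in both limits---would also work, but requires reproving a vector-valued version of Theorem~\ref{prop.du}; the merge is the cleaner shortcut. (The sign issue you track is indeed harmless in law: since $W$ is independent of $x$, replacing $W$ by $-W$ leaves the joint law of $(U,x)$ unchanged.)
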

\begin{proof}
Recall that we have  shown the tightness of    $U^{(n)}$ in Theorem \ref{thm.conv} (ii). In order to prove   \eqref{e.errorc} it remains to show the convergence of the f.d.d.  of $(U^{(n)}, x)$. According to Corollary~\ref{cor.error} this can be   reduced to proving the convergence: 
\begin{eqnarray}
\qquad
\big( n^{H+1/2}\La^{(n)}_{t}\big( \ce^{z, n}_{\ell}(0,t)  +E_{211}(0,t)\big) ,\,\,   t\in[0,T]\big)  \xrightarrow{stable\,\,f.d.d.} U
\qquad
\text{as $n\to\infty$}. 
\label{e.uconv2}
\end{eqnarray}


Define the processes:   $
\bar{z}_{t}^{(i-1)} :=   \Ga_{t}\partial^{i} b(y_{t})\si(t)^{\otimes i}$,  
$
i=1, \dots, \ell$. 
As in \eqref{e.zi} it is easy to see that $(\bar{z},\bar{z}',\dots, \bar{z}^{(\ell-1)})$ is controlled by $(x,\ell,H)$ in $L_{p}$ for $p\geq 1$. 
As a first step for the proof of \eqref{e.uconv2}, let us      show that
\begin{eqnarray}
n^{H+1/2}\big(
\La_{t}\ce^{\bar{z}, n}_{\ell}(0,t)-\La^{(n)}_{t} \ce^{ {z}, n}_{\ell}(0,t) \big)
\to 0
\quad \text{in probability as $n\to\infty$.}
\label{e.lajc}
\end{eqnarray}
Note that by the definition  of $\ce^{\bar{z}, n}_{\ell}(0,t)$ and $ \ce^{ {z}, n}_{\ell}(0,t)$ we have
   \begin{eqnarray}
&&
\La_{t}\ce^{\bar{z}, n}_{\ell}(0,t)-\La^{(n)}_{t} \ce^{ {z}, n}_{\ell}(0,t)  
\notag\\
&&  \qquad=  \sum_{i=1}^{\ell}
\int_{0}^{t} \big( \La_{t}\Ga_{\eta(u)} \partial^{i}b(y_{\eta(u)}) -\La^{(n)}_{t}\Ga^{(n)}_{\eta(u)} \partial^{i}b(y^{(n)}_{\eta(u)})  \big)\si^{\otimes i}   x^{i}_{\eta(u)u} du.
\label{e.te2} 
\end{eqnarray}
Applying the mean value theorem and   the estimates \eqref{e.rate} and \eqref{e.laconv} we have: 
\begin{eqnarray*}
\big|\La^{(n)}_{t}\Ga^{(n)}_{\eta(u)} \partial^{i}b(y^{(n)}_{\eta(u)}) - \La_{t}\Ga_{\eta(u)} \partial^{i}b(y_{\eta(u)}) \big|_{L_{p}}&\lesssim& (1/n)^{H+1/2}. 
\end{eqnarray*}
Applying this to \eqref{e.te2} gives: 
\begin{eqnarray*}
| \La_{t}\ce^{\bar{z}, n}_{\ell}(0,t)-\La^{(n)}_{t} \ce^{ {z}, n}_{\ell}(0,t)     |_{L_{p}} &\lesssim& \sum_{i=1}^{\ell}(1/n)^{H+1/2} (1/n)^{iH}.
\end{eqnarray*}
It follows that relation \eqref{e.lajc} holds. 

 Similar to \eqref{e.lajc}, define  
\begin{eqnarray}
\bar{E}_{211}(s,t) &=&-\sum_{s \leq t_{k}<t } \Ga_{t_{k}} \si'(t_{k})  h^{1}_{t_{k}t_{k+1}}. 
\notag
\end{eqnarray}
Then we can show that 
\begin{eqnarray}
n^{H+1/2}\big| 
\La^{(n)}_{t} E_{211}(0,t) -\La_{t}  \bar{E}_{211}(0,t)  
\big| \to 0
\qquad
\text{ in probability as $n\to\infty$.}
\label{e.e211b}
\end{eqnarray}


Now, according to  Theorem \ref{thm.taylor} we have the f.d.d. stable convergence: 
\begin{eqnarray}
\big( n^{H+1/2}\La_{t}( \ce^{\bar{z}, n}_{\ell}(0,t) + \bar{E}_{211}(0,t))  ,\,\, x_{t}  ,\,\, t\in[0,T]\big)  \to ( U_{t}, \,\,x_{t} , \,\,t\in[0,T])
\label{e.uconv}
\end{eqnarray}
as $n\to\infty$, where
\begin{eqnarray}
U_{t}
=
c_{H}^{1/2}T^{H+1/2}\La_{t} 
\int_{0}^{t}\Ga_{u} \big[\partial b(y_{u}) \si (u) -\si' (u)\big]dW_{u}  
 \, .
\label{e.udef}
\end{eqnarray}
It is easy to verify   that the process $U$ defined in \eqref{e.udef} satisfies   equation \eqref{e.udef1}. 
Putting together relations   \eqref{e.lajc}, \eqref{e.e211b} and   \eqref{e.uconv}  we obtain      the convergence in  \eqref{e.uconv2}. 
 This completes the proof. 
\end{proof}

\bibliographystyle{abbrv}
\bibliography{additiveSDE.bib} 


\end{document}